
\documentclass[reqno,12pt]{amsart}
\usepackage{amsmath,amssymb,latexsym,soul,cite,mathrsfs}
\usepackage{xcolor,enumitem,graphicx}
\usepackage[colorlinks=true,urlcolor=blue,
citecolor=red,linkcolor=blue,linktocpage,pdfpagelabels,
bookmarksnumbered,bookmarksopen]{hyperref}
\usepackage[english]{babel}
\usepackage[left=2.6cm,right=2.6cm,top=2.9cm,bottom=2.9cm]{geometry}
\usepackage[hyperpageref]{backref}
\newtheorem{theoremletter}{Theorem}

\pretolerance=10000
\marginparwidth 0cm \oddsidemargin 0cm \evensidemargin 0cm
\topmargin 0pt \textheight 220mm \textwidth 160mm

\newtheorem{theo}{Theorem}[section]

\newtheorem{definition}{Definition}[section]

\newtheorem{lemma}{Lemma}[section]

\newtheorem{prop}{Proposition}[section]

\newtheorem{cor}{Corollary}[section]

\numberwithin{equation}{section}

\title[Trudinger-Moser type inequalities for the Hessian equation]{Trudinger-Moser type inequalities for the Hessian equation with logarithmic weights}

\author[J.M. do \'O]{Jo\~ao Marcos do \'O*}
\author[J.F. de Oliveira]{Jos\'e Francisco de Oliveira}
\author[R.C. Ponciano]{Raon\'{\i} Cabral Ponciano}

\address[J.M. do \'O]{Department of Mathematics,
	Federal University of Para\'{\i}ba
	\newline\indent 
	58051-900, Jo\~ao Pessoa-PB, Brazil}
\email{\href{mailto:jmbo@mat.ufpb.br}{jmbo@mat.ufpb.br}}

\address[J.F. de Oliveira]{
Department of Mathematics,
Federal University of Piau\'{i}
 \newline\indent 
	64049-550, Teresina, PI, Brazil}
\email{\href{mailto:jfoliveira@ufpi.edu.br}{jfoliveira@ufpi.edu.br}}

\address[R.C. Ponciano]{Department of Mathematics,
	Federal University of ABC
	\newline\indent
	09280-560, Santo Andr\'e-SP, Brazil}
\email{\href{mailto:raoni.ponciano@ufabc.edu.br}{raoni.ponciano@ufabc.edu.br}}

\thanks{*Corresponding author.}

\subjclass[2020]{35J60, 35B33, 46E35, 26D10}
\keywords{$k$-Hessian operator, Hardy Inequalities, Trudinger-Moser inequality, Extremals, Attainability, Weighted Sobolev spaces}

\begin{document}
	\begin{abstract}
		We establish sharp Trudinger–Moser inequalities with logarithmic weights for the $k$-Hessian equation and investigate the existence of maximizers. Our analysis extends the classical results of Tian and Wang to $k$-admissible function spaces with logarithmic weights, providing a natural complement to the work of Calanchi and Ruf. Our approach relies on transforming the problem into a one-dimensional weighted Sobolev space, where we solve it using various techniques, including some radial lemmas and certain Hardy-type inequalities, which we establish in this paper, as well as a theorem due to Leckband.
\end{abstract}
	\maketitle	
	\begin{center}
		\footnotesize
		\tableofcontents
	\end{center}

\section{Introduction}
Since the pioneering works by Caffarelli-Nirenberg-Spruck~\cite{zbMATH04067434,MR739925,MR780073}, the $k$-Hessian equation (or $k$-Hessian operator) has emerged as a topic of great importance in nonlinear analysis, geometric analysis and the calculus of variations. Prototypical features such as being fully nonlinear for $k>1$, having a divergent form, potentially being either elliptic or non-elliptic (degenerate or not) depending on the context, and its strong connection with geometric analysis problems make it a highly relevant and interesting research subject, we refer \cite{Wang2009,MR1275451} for a comprehensive survey. As a result, there have been several studies in different directions, we recommend \cite{chou2001variational,de2020admissible,zbMATH06712355,chou1996critical,tso1990remarks} for the study of classical solutions via variational methods, \cite{trudinger1995dirichlet,chou2001variational} for regularity estimates, \cite{trudinger1999hessian,trudinger2002hessian,trudinger1997hessian} for Hessian measures, and \cite{phuc2008quasilinear,phuc2009singular,zbMATH01820865} for the existence of either classical or viscosity solutions based on Wolff's potentials.


 The existence of classical solutions typically necessitates the ellipticity condition. In this framework, it is essential to constrain the action of the operator to the space of $k$-admissible functions, whose structure aligns with both the divergence form and the ellipticity of the $k$-Hessian operator. This restriction not only facilitates a rigorous variational analysis but also enables the derivation of regularity estimates \cite{chou2001variational}.

Our goal in this paper is to obtain sharp estimates compatible with the variational study of the $k$-Hessian equation in extreme regimes, where there is a strong loss of compactness in the energy associated functional. 

In fact, we want to extend the Trudinger-Moser type inequality by Tian-Wang~\cite{zbMATH05782190} for the log-weighted $k$-admissible function spaces which represent the counterpart of the results due to Calanchi-Ruf~\cite{CALANCHI20151967,CALANCHI2015403} for the $k$-Hessian equation. In addition, we investigate the associated extremal problem providing an extension of the results of Roy~\cite{zbMATH07074659,zbMATH06562450} and Nguyen~\cite{zbMATH07122708} in the $n$-Laplacian framework and by de Oliveira, Do \'{O}, and Ruf~\cite{zbMATH07238535} for the $k$-Hessian equation. As a byproduct of our approach, we provide general log-weighted Hardy-type inequalities, which are of independent interest in the context of weighted Sobolev spaces, including fractional dimensions. These inequalities have applications in general classes of differential operators in the radial form and have been addressed in several works; see, for instance, without any claim of completeness \cite{zbMATH01127685,zbMATH07714682,zbMATH07972414,zbMATH07827601,zbMATH07946843,zbMATH06576572,zbMATH06654683,zbMATH06343604,zbMATH01530501,zbMATH07214288,XueZhangZhu2025,MR4790800,MR4870665} and the references therein.

To formalize our results, we begin with notation and key results on the 
$k$-Hessian equation. Let $ \Omega$ be a smooth bounded domain in $\mathbb{R}^n $. For $k=1,\ldots,n$ and $u\in C^2(\Omega)$, the $k$-th Hessian operator $S_k(D^2u)$ is defined by the sum of all $k \times k$ principal minors of the Hessian matrix $D^2u$. Hence $S_1(D^2u)=\Delta u$ and $S_n(D^2u)=\det(D^{2}u)$. The fully nonlinear operators $S_k(D^2u)$, $k=2,\ldots, n$ are not elliptic in the entire space $C^2(\Omega)$. To ensure the ellipticity, we must restrict ourselves to the space of $k$-admissible functions. Following \cite{zbMATH04067434,MR1275451}, we say a function $u\in C^2(\Omega)\cap C^0(\overline{\Omega})$ is $k$-admissible on $\Omega$ if $S_j(D^2u)\ge 0$ for $j=1,\ldots,k$. We denote by $\Phi^k(\Omega)$ the space of all $k$-admissible functions on $\Omega$ and by $\Phi^{k}_{0}(\Omega)$ the subspace of functions $u\in\Phi^{k}(\Omega)$ that satisfy the boundary condition $u_{\mid_{\partial\Omega}}=0$. Acting on the space $\Phi^{k}_{0}(\Omega)$, the $k$-Hessian operators for $1\le k\le n$ are elliptic and have a divergent form, making them compatible with the variational study (cf.\cite{chou2001variational}) in $\Phi^{k}_{0}(\Omega)$ endowed with the norm
\begin{equation*}
\|u\|_{\Phi_0^k}= \Big(\int_{\Omega}(-u)S_{k}(D^2 u)\, \mathrm{d}x\Big)^{\frac{1}{k+1}}, \text{ for all } u\in \Phi^{k}_{0}(\Omega
),
\end{equation*}
see for instance \cite{MR1275451,Wang2009} for more details.

The classical Sobolev embedding for $\Phi^k_0(\Omega)$ was established in \cite[Theorem 5.2]{MR1275451}. Actually, under the Sobolev condition $1 \leq k < \frac {n}{2}$, it was shown that the following continuous embedding holds
\begin{equation*}
\Phi_0^k(\Omega)\hookrightarrow L^p(\Omega), \; \text{ for all }\; p \in [1,k^*], \text{ with } k^* = \frac{n(k+1)}{n-2k}.
\end{equation*}
For the borderline case $n = 2k$, Tian-Wang~\cite{zbMATH05782190} proved the following sharp inequality of Trudinger-Moser type \cite{zbMATH03261965,zbMATH03323360}
\begin{equation}\label{TMinequality} 
\sup_{\|u\|_{\Phi^{k}_{0}}=1} \int_{\Omega}\mathrm{e}^{\alpha_n |u|^{\frac{n+2}{n}}}\, \mathrm{d} x \leq C,
\end{equation}
where 
\begin{equation} \label{critical-constant}
\alpha_n=nc^{\frac{2}{n}}_n, \ \mbox{ with } c_n=\frac{\omega_{n-1}}{k}\binom{n-1}{k-1},
\end{equation}
$\omega_{n-1}$ is the area of the unit sphere in $\mathbb{R}^{n}$, and $C$ is a positive constant depending only on $n$ and $\mathrm{diam}(\Omega)$. The existence of maximizers for \eqref{TMinequality} in the case $\Omega=B$, the unit ball in $\mathbb{R}^n$, was recently investigated in \cite{zbMATH07238535}. The inequality \eqref{TMinequality} and its extensions, play a central role in the variational study of the $k$-Hessian equation under the critical exponential growth condition; see, for instance, \cite{de2020admissible,zbMATH07931768} and the references therein.  This paper investigates Trudinger–Moser-type inequalities for the Hessian equation in the presence of a logarithmic weight. We further examine the existence of maximizers for the associated variational problem within the framework of $k$-admissible function spaces.

\subsection*{Description of the results}

Since we are dealing with inequalities and maximization problems in non-rearrangement-invariant Banach spaces \cite{zbMATH05782190} and given the nature of the results in \cite{CALANCHI20151967,CALANCHI2015403}, we will carry out our analysis in $\Phi^{k}_{0,\mathrm{rad}}(B)$, the subspace of radially symmetric functions in $\Phi^{k}_0(B)$, where $B$ the unit ball in $\mathbb{R}^n$ centered at the origin $0\in\mathbb{R}^n$.

For any $u\in\Phi^{k}_{0,\mathrm{rad}}(B)$ with $u(x)=v(r)$, where $r=|x|$, we have 
\begin{equation}\label{fullnorm}
\|u\|_{\Phi^{k}_{0}}=\left(c_{n}\int_{0}^{1}r^{n-k}|v^{\prime}|^{k+1}\,\mathrm{d} r\right)^{\frac{1}{k+1}},
\end{equation}
where $c_n$ is given by \eqref{critical-constant}. Motivated by \eqref{fullnorm}, for any $w(x)=w(r)$ positive radial weight function on $B$ and $u\in\Phi^{k}_{0,\mathrm{rad}}(B)$, we set
\begin{equation}\label{normPhi}
 \|u\|_{\Phi,w}=\Big(c_{n}\int_{0}^{1}r^{n-k}|v^{\prime}|^{k+1}w \, \mathrm{d} r\Big)^{\frac{1}{k+1}}.  
\end{equation}
Then, we define
\begin{equation}\label{defPhi}
\Phi^{k}_{0,\mathrm{rad}}(B, w)=\mathrm{cl}\left\{u\in \Phi^{k}_{0,\mathrm{rad}}(B)\;:\; \|u\|_{\Phi,w}<\infty \right\},
\end{equation}
where the closure is taken with respect to the norm \eqref{normPhi}. Here, we are mainly interested in the prototypes 
\begin{equation}\label{w-types}
w_0(x)=\left(\ln\frac{1}{|x|}\right)^{\frac{\beta n}{2}} \quad \text{ or } \quad  w_1(x)=\left(\ln\frac{\mathrm{e}}{|x|}\right)^{\frac{\beta n}{2}},\;\;\mbox{with}\;\; \beta\in [0,1).
\end{equation}
By a slight abuse of notation, we will denote the functions $w_0(r):=w_0(x)$ and $w_1(r):=w_1(x)$, where $r=|x|$, using the same letter $w$.
\\

\subsection{Log-weighted Trudinger-Moser inequalities for \texorpdfstring{$k$}{}-Hessian and maximizers}
In this section we will present Trudinger-Moser type inequalities in the $k$-admissible functions spaces $\Phi^{k}_{0,\mathrm{rad}}(B, w)$ for both log-weights $w=w_0$ and $w=w_1$ in \eqref{w-types}, which represent the counterpart of the results due to Calanchi-Ruf~\cite{CALANCHI20151967,CALANCHI2015403} for $k$-Hessian operators. In addition, we will investigate the existence of maximizers for the corresponding extremal problem providing an extension of previous results by Roy~\cite{zbMATH07074659,zbMATH06562450} and Nguyen~\cite{zbMATH07122708} for $n$-Laplace equation and by de Oliveira, do \'{O}, and Ruf~\cite{zbMATH07238535} even in the context of $k$-Hessian equation.  \\
\begin{theo}\label{thm1}
Assume $k=n/2$ and let $\beta\in [0,1)$ and let $w$ be given by \eqref{w-types}. 
\begin{enumerate}
\item [$(a)$ ] Then for any $u\in \Phi^{k}_{0,\mathrm{rad}}(B, w)$,
\begin{equation*}
\int_{B}\mathrm{e}^{|u|^{\gamma}}\, \mathrm{d} x < \infty  \Longleftrightarrow \gamma\leq \gamma_{n,\beta}:=\frac{n+2}{n(1-\beta)}.
\end{equation*}
\item [$(b)$ ] Let $\Sigma:=\left\{u\in \Phi^{k}_{0,\mathrm{rad}}(B, w)\,:\,\|u\|_{\Phi, w}\leq 1 \right\}$, then
\begin{equation}\label{S-attainable}
\mathrm{MT}(n,\alpha,\beta)=\sup_{u\in \Sigma}\int_{B}\mathrm{e}^{\alpha|u|^{\gamma_{n,\beta}}}\, \mathrm{d} x < \infty
\Longleftrightarrow  \alpha\leq \alpha_{n,\beta}=n\left[c^{\frac{2}{n}}_n(1-\beta)\right]^{\frac{1}{1-\beta}}.
\end{equation}
\end{enumerate}
\end{theo}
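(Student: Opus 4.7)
The natural first step is a radial reduction to a one–dimensional weighted problem on the half–line. Writing $u(x)=v(r)$, $r=|x|$, setting $t=\ln(1/r)$ and $\phi(t):=-v(\mathrm{e}^{-t})\ge 0$, the Dirichlet condition gives $\phi(0)=0$; since $k=n/2$, the exponential factors arising from $r^{n-k}\,\mathrm{d}r$ and from $|v'|^{k+1}$ cancel exactly in \eqref{normPhi}, yielding
\[
\|u\|_{\Phi,w}^{k+1}=c_n\int_0^\infty |\phi'(t)|^{k+1}\,\omega(t)\,\mathrm{d}t,\qquad \int_B \mathrm{e}^{\alpha|u|^\gamma}\,\mathrm{d}x=\omega_{n-1}\int_0^\infty \mathrm{e}^{\alpha\phi(t)^\gamma}\mathrm{e}^{-nt}\,\mathrm{d}t,
\]
where $\omega(t)=t^{\beta n/2}$ if $w=w_0$ and $\omega(t)=(1+t)^{\beta n/2}$ if $w=w_1$. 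Thus everything reduces to a sharp exponential integrability statement for $\phi$ against the measure $\mathrm{e}^{-nt}\,\mathrm{d}t$.

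The central analytic tool is the log-weighted Hardy-type pointwise bound
\[
\phi(t)\le c_n^{-1/(k+1)}\,\|u\|_{\Phi,w}\Bigl(\tfrac{t^{1-\beta}}{1-\beta}\Bigr)^{n/(n+2)},\qquad t>0,
\]
obtained by Hölder's inequality with conjugate exponents $(k+1,(k+1)/k)$ applied to $\phi(t)=\int_0^t\phi'(s)\,\mathrm{d}s$, using the condition $\beta n/(2k)=\beta<1$ so that $s^{-\beta}$ is integrable at the origin. Raising to the power $\gamma=\gamma_{n,\beta}$, the algebraic identity $(1-\beta)\gamma n/(n+2)=1$ makes the right-hand side exactly linear in $t$, and a direct computation yields $\alpha_{n,\beta}\phi(t)^\gamma\le nt$ whenever $\|u\|_{\Phi,w}\le 1$. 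For part (a) with $\gamma\le \gamma_{n,\beta}$, I would split the integral at a level $t_0$ and, on the tail $[t_0,\infty)$, replace $\|u\|_{\Phi,w}$ by the vanishing quantity $\bigl(c_n\int_{t_0}^\infty|\phi'|^{k+1}\omega\,\mathrm{d}s\bigr)^{1/(k+1)}$; this produces $\phi(t)^\gamma\le C+\delta(t_0)t$ with $\delta(t_0)\to 0$, hence finiteness for every fixed $u\in \Phi^k_{0,\mathrm{rad}}(B,w)$. For (b), however, the Hölder bound alone only yields the divergent $\int_0^\infty \mathrm{e}^{nt-nt}\,\mathrm{d}t$, so to obtain a uniform bound on $\Sigma$ I would appeal to a theorem of Leckband applied to the one–dimensional integration operator $\phi'\mapsto \phi$ acting in $L^{k+1}(\omega(t)\,\mathrm{d}t)$, whose sharp exponential integrability conclusion delivers $\mathrm{MT}(n,\alpha_{n,\beta},\beta)<\infty$.

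For the sharpness directions in both (a) and (b), I would use a Moser-type concentrating sequence modeled on Moser's classical construction: a profile of the form $\phi_m(t)=A_m(t/m)^{(1-\beta)n/(n+2)}$ on $[0,m]$, extended by a constant afterwards, with $A_m$ chosen so that $\|u_m\|_{\Phi,w}=1$. An asymptotic computation then shows that $\int_B \mathrm{e}^{\alpha|u_m|^{\gamma_{n,\beta}}}\,\mathrm{d}x\to\infty$ precisely when $\alpha>\alpha_{n,\beta}$, and an appropriate rescaling gives the corresponding divergence statement for $\gamma>\gamma_{n,\beta}$ in (a). The main obstacle will be the uniform estimate at the critical threshold in part (b): one has to match the hypotheses of Leckband's theorem to the weighted 1D problem with weight $\omega(t)$ and carefully track the product of constants $c_n^{2/(n(1-\beta))}$, $(1-\beta)^{1/(1-\beta)}$ and $n$ through the change of variables so that the critical $\alpha$ is identified as $\alpha_{n,\beta}$ rather than a suboptimal number. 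The $w_1$ case requires only minor adjustments, since $(1+t)^{\beta n/2}$ stays bounded near $t=0$ and is equivalent to $t^{\beta n/2}$ at infinity.
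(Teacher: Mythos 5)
Your sufficiency outline (radial reduction $t=\ln(1/r)$, the Hölder/Hardy pointwise bound from Corollary~\ref{corollary-radial}, the ``split at a level $t_0$'' tail argument for part (a), Leckband's theorem at the critical $\alpha$ in part (b)) matches the paper's strategy, which is carried out in Propositions~\ref{prop-thm1} and Corollary~\ref{corollary-radial} and then pulled back via \eqref{gate-norm}--\eqref{gate-fuctional}. Two issues remain, one computational and one structural.

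The computational one: your Moser profile $\phi_m(t)=A_m(t/m)^{(1-\beta)n/(n+2)}$ does not have finite weighted norm. With $\omega(t)=t^{\beta n/2}$ and $k+1=(n+2)/2$, one computes $(\sigma-1)(k+1)+\tfrac{\beta n}{2}=-1$ exactly when $\sigma=(1-\beta)\tfrac{n}{n+2}$, so $\int_0^m|\phi_m'|^{k+1}\omega\,\mathrm{d}t$ diverges logarithmically at $0$. You have conflated the exponent in the pointwise Hardy bound (which \emph{is} $(1-\beta)n/(n+2)$, the growth rate of $|v|$ allowed by the energy) with the exponent of the extremizing profile. The correct profile compensates the weight and uses $\sigma=1-\beta$, as in \eqref{velloptimalMoser}; note this reduces to the linear Moser profile at $\beta=0$, whereas yours does not.

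The structural gap is more serious: Theorem~\ref{thm1} is stated for $\Phi^{k}_{0,\mathrm{rad}}(B,w)$, not for the transported space $X^{1,k+1}_{1,w}$. For the sufficiency direction, the map $u\mapsto v(r)=u(|x|)$ does send $\Phi^{k}_{0,\mathrm{rad}}(B,w)$ into $X^{1,k+1}_{1,w}$, so transporting is harmless. But for the sharpness/necessity direction the implication runs the other way: one must exhibit a \emph{$k$-admissible} sequence, i.e.\ radial $C^2$ functions with $S_j(D^2U)\ge 0$ for $j=1,\dots,k$, belonging to (the closure) $\Phi^{k}_{0,\mathrm{rad}}(B,w)$. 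Your concentrating profiles are piecewise defined and fail to be $C^2$ at the corner, and it is not automatic that a given member of $X^{1,k+1}_{1,w}$ lifts back to a $k$-admissible function. The paper addresses this with a dedicated mollification step (see the proof of Theorem~\ref{thm1} in Section~\ref{section4}: the smoothings $\widehat V_\ell$, $V_\ell$, $\overline V_\ell$ are built so that $\widehat V_\ell''+\widehat V_\ell'/r\ge 0$, etc., which together with the radial formula for $S_j$ yields admissibility, and the mollification parameter is calibrated so the energy and the exponential integral are preserved up to $o(1)$). Without this verification, your argument proves the sharpness statement of Proposition~\ref{prop-thm1} for $X^{1,k+1}_{1,w}$, but not Theorem~\ref{thm1} itself. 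You should add this step; in particular you need to check that your profile's second derivative has the right sign away from the corner (this is where the explicit form with exponent $1-\beta$ is used, cf.\ the inequalities \eqref{q1a}--\eqref{q2a} and \eqref{q11}--\eqref{q21}).
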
 
By the monotonicity formula \cite[Lemma 3.1]{zbMATH05782190} the proof of \eqref{TMinequality} can be reduced to radial functions $\Phi^{k}_{0,\mathrm{rad}}(B)$. Thus, for $\beta=0$, Theorem~\ref{thm1} recovers the Trudinger-Moser inequality for the Hessian equation proved by Tian-Wang~\cite{zbMATH05782190}. 

The next result concerns embeddings for the $\Phi^{k}_{0,\mathrm{rad}}(B, w)$ logarithmic weight $w=w_1$.

\begin{theo}\label{thm1B} Suppose $k=n/2$ and $w_1(x)=\big(\ln\frac{\mathrm{e}}{|x|}\big)^{\frac{\beta n}{2}}$ with $\beta>1$. Then we have the continuous embedding
    \begin{align*}
      \Phi^{k}_{0,\mathrm{rad}}(B, w_1) \hookrightarrow L^{\infty}(B).
    \end{align*}
\end{theo}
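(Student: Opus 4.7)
The plan is to derive a uniform pointwise bound $|v(r)| \leq C\,\|u\|_{\Phi,w_1}$ for $r \in (0,1]$ and then pass to the closure. I would work first with smooth radial $k$-admissible functions $u(x)=v(r)$ satisfying $v(1)=0$ and then extend by density, since the resulting bound will force Cauchy sequences in $\|\cdot\|_{\Phi,w_1}$ to be Cauchy in $L^\infty$.

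Starting from $v(r) = -\int_r^1 v'(s)\,ds$, I would estimate
\begin{equation*}
|v(r)| \leq \int_r^1 |v'(s)|\,ds
= \int_r^1 \Big(|v'(s)|\, s^{\frac{k}{k+1}} (\ln(\mathrm{e}/s))^{\frac{\beta n}{2(k+1)}}\Big)\Big(s^{-\frac{k}{k+1}} (\ln(\mathrm{e}/s))^{-\frac{\beta n}{2(k+1)}}\Big)\,ds,
\end{equation*}
and apply H\"older's inequality with conjugate exponents $k+1$ and $(k+1)/k$. Using that $k = n/2$ forces $n-k = k$ in the definition of $\|u\|_{\Phi,w_1}$, the first factor reproduces a piece of the $\Phi$-norm and the second yields
\begin{equation*}
|v(r)| \leq c_n^{-\frac{1}{k+1}}\,\|u\|_{\Phi,w_1}\,
\bigg(\int_r^1 s^{-1} \Big(\ln\frac{\mathrm{e}}{s}\Big)^{-\beta}\,ds\bigg)^{\!\frac{k}{k+1}},
\end{equation*}
where I used $\beta n/(2k) = \beta$.

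The remaining dual integral is handled by the change of variables $t = \ln(\mathrm{e}/s)$, which gives
\begin{equation*}
\int_r^1 s^{-1}\Big(\ln\frac{\mathrm{e}}{s}\Big)^{-\beta}\,ds = \int_1^{\ln(\mathrm{e}/r)} t^{-\beta}\,dt \leq \int_1^{\infty} t^{-\beta}\,dt = \frac{1}{\beta-1},
\end{equation*}
where the convergence of the improper integral is precisely the place the hypothesis $\beta > 1$ enters, in sharp contrast with the case $\beta \in [0,1)$ treated in Theorem \ref{thm1}. Combining these inequalities and taking the supremum over $r \in (0,1]$ yields
\begin{equation*}
\|u\|_{L^\infty(B)} \leq c_n^{-\frac{1}{k+1}}(\beta-1)^{-\frac{k}{k+1}}\|u\|_{\Phi,w_1}.
\end{equation*}

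The only real subtlety is ensuring the bound genuinely holds on the closure $\Phi^{k}_{0,\mathrm{rad}}(B,w_1)$: since smooth radial admissible functions are dense by definition and the $\|\cdot\|_{\Phi,w_1}$-Cauchy property transfers, through the above inequality, to uniform Cauchyness, the limit in the closure coincides with the $L^\infty$ limit and the embedding constant is preserved. I do not expect any significant obstacle beyond this routine completeness verification; the analytic content is the single Hölder step combined with the logarithmic change of variables.
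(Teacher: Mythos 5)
Your argument is correct and is essentially the paper's own: the single H\"older step is precisely the content of Lemma~\ref{r-estimate}, the computation of the dual integral via $t=\ln(\mathrm{e}/s)$ is Corollary~\ref{corollary-radial}-$(b)$, and the uniform bound you obtain, $c_n^{-2/(n+2)}(\beta-1)^{-n/(n+2)}\|u\|_{\Phi,w_1}$ (using $1/(k+1)=2/(n+2)$ and $k/(k+1)=n/(n+2)$), matches the paper's constant exactly. The density/completeness remark at the end is also the correct way to pass from the dense class to the closure, which the paper handles implicitly through the identity \eqref{gate-norm} and the weighted Sobolev space $X^{1,k+1}_{1,w_1}$.
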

Note that, for $\beta > 1$, the functions of $\Phi^{k}_{0,\mathrm{rad}}(B, w_1)$ with $n=2k$ are bounded. This asserts that the optimal growth \cite{zbMATH03325896}, i.e. the maximal growth for a real function $\varphi$ such that $\varphi(u)\in L^{1}(B)$ for any $u\in \Phi^{k}_{0,\mathrm{rad}}(B, w_1)$, is very slow and shows no signs of a criticality phenomenon. In fact, in the next result we show that
$\beta=1$ is a threshold with double-exponential optimal growth.
\begin{theo}\label{thm2} Assume $k=n/2$ and let $w_1(x)=\big(\ln\frac{\mathrm{e}}{|x|}\big)^{\frac{n}{2}}$. 
\begin{enumerate}
\item [$(a)$ ] Then, for any $u\in \Phi^{k}_{0,\mathrm{rad}}(B, w_1)$
\begin{equation*}
\int_{B}\mathrm{e}^{\mathrm{e}^{|u|^{\frac{n+2}{n}}}}\,\mathrm{d} x < \infty.
\end{equation*}
\item [$(b)$ ] Let $\Sigma_1:=\left\{u\in \Phi^{k}_{0,\mathrm{rad}}(B, w_1)\,:\,\|u\|_{\Phi, w_1}\leq 1 \right\}$, then
\begin{equation*}
\sup_{u\in \Sigma_1}\int_{B}\mathrm{e}^{a\mathrm{e}^{c^{\frac{2}{n}}_n|u|^{\frac{n+2}{n}}}}\,\mathrm{d} x < \infty\;\;\Longleftrightarrow \;\; a\leq n.
\end{equation*}
\end{enumerate}
\end{theo}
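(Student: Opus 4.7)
The plan is to reduce both parts of Theorem~\ref{thm2} to a one-dimensional problem on $[1,\infty)$ via the change of variables $t=\ln(e/|x|)$. Writing $u(x)=v(|x|)$ and $\phi(t)=v(e^{1-t})$, one computes $\|u\|_{\Phi,w_1}^{k+1}=c_n\int_1^\infty |\phi'|^{k+1}t^k\,dt$ and
$$
\int_{B}F(|u|)\,dx=\omega_{n-1}\,e^{n}\int_1^\infty e^{-nt}F(|\phi(t)|)\,dt,
$$
with $\phi(1)=0$ and, using $k=n/2$, the identification $(n+2)/n=(k+1)/k$. Both assertions become sharp estimates for $\phi$ on the half-line.

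For part~(a), the basic tool is Hölder's inequality with exponents $k+1$ and $(k+1)/k$ applied to $\int_T^t|\phi'|\,ds$, which yields $|\phi(t)-\phi(T)|^{(k+1)/k}\le\delta(T)^{1/k}\ln(t/T)$ with $\delta(T)=\int_T^\infty|\phi'|^{k+1}s^k\,ds$. Since $\|u\|_{\Phi,w_1}<\infty$ forces $\delta(T)\to 0$, I pick $T$ large and use the elementary inequality $(a+b)^p\le(1+\varepsilon)a^p+C_\varepsilon b^p$ to obtain $|\phi(t)|^{(k+1)/k}\le\eta\ln(t/T)+C$ on $[T,\infty)$ with $\eta$ as small as desired. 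This produces $e^{e^{|\phi(t)|^{(n+2)/n}}}\le e^{K(t/T)^{\eta}}$, a subexponential perturbation that is integrable against $e^{-nt}$ on $[T,\infty)$; the piece on $[1,T]$ is finite by continuity of $\phi$.

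For part~(b), sufficiency under $\|u\|_{\Phi,w_1}\le 1$ requires a sharp 1D inequality. Setting $A(t)=\int_1^t|\phi'|^{k+1}s^k\,ds\le c_n^{-1}$, Hölder gives the preliminary bound $c_n^{2/n}|\phi(t)|^{(k+1)/k}\le (c_nA(t))^{1/k}\ln t\le \ln t$, and a further substitution $s=\ln t$ with $\psi(s)=\phi(e^s)$ removes the $t^k$ weight, yielding $\int_0^\infty|\psi'|^{k+1}\,ds\le c_n^{-1}$ and reducing the target estimate to
$$
\int_0^\infty e^{s-n e^{s}}\exp\!\bigl(a\,e^{c_n^{2/n}|\psi(s)|^{(k+1)/k}}\bigr)\,ds\le C,\qquad a\le n.
$$
This is exactly the sharp 1D double-exponential Trudinger--Moser inequality on the half-line with sharp constant $n$; it is the content of Leckband's theorem cited in the abstract, and invoking it closes the sufficiency direction.

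To prove necessity, I test against the concentration sequence with 1D profile linear in $\ln t$, namely $\phi_m(t)=c_n^{-1/(k+1)}\ln t/(\ln T_m)^{1/(k+1)}$ on $[1,T_m]$, extended by $\phi_m(T_m)$ on $[T_m,\infty)$, with $T_m\to\infty$. A straightforward computation gives $\|u_m\|_{\Phi,w_1}=1$, $c_n^{2/n}|\phi_m(T_m)|^{(n+2)/n}=\ln T_m$, and
$$
\int_{T_m}^\infty e^{-nt}\exp\!\bigl(a\,e^{c_n^{2/n}|\phi_m|^{(n+2)/n}}\bigr)\,dt=\frac{e^{(a-n)T_m}}{n},
$$
which blows up for $a>n$. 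The main obstacle of the whole argument is the sharp sufficiency at the endpoint $a=n$: the plain Hölder estimate just fails to be integrable there, and the gap has to be closed by Leckband's sharp 1D double-exponential inequality after the two nested changes of variables described above; the rest is essentially routine.
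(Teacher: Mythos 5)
Your reduction to a one-dimensional inequality, your Moser-style argument for part (a), your two-stage change of variables in part (b), and your invocation of Leckband's theorem for the sharp endpoint $a=n$ are all essentially the same as the paper's (the paper applies Leckband with $\varphi(t)=(1+t)^{-n/(n+2)}$ after a single substitution $t=\ln(1/r)$, whereas you take $\varphi\equiv 1$ after the extra substitution $s=\ln t$; these are equivalent parametrizations of the same application of Theorem~\ref{Leckband}). The concentration profile you test with for necessity is, up to sign, the paper's sequence $\widetilde v_\ell$ in \eqref{velltilde}.

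There is, however, a genuine gap in the necessity argument, and it is precisely the step you wave away as ``essentially routine.'' The supremum in Theorem~\ref{thm2}(b) is taken over $\Phi^{k}_{0,\mathrm{rad}}(B,w_1)$, which by \eqref{defPhi} is the closure of \emph{$k$-admissible} radial functions; it is a strictly smaller space than $X^{1,k+1}_{1,w_1}$. Your test functions $\phi_m$ produce radial functions $u_m$ on $B$ that have a corner at $|x|=\mathrm{e}^{1-T_m}$ and hence are not in $C^2(B)$, and in particular are not manifestly in $\Phi^{k}_{0,\mathrm{rad}}(B,w_1)$. Blowing up the 1D functional along $(\phi_m)$ proves sharpness in $X^{1,k+1}_{1,w_1}$ (that is the content of Proposition~\ref{prop-thm2}), but not in the $k$-admissible space. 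The paper closes this by explicitly checking the radial $j$-Hessian sign condition $\left(r^{n-j}(v')^j\right)'\geq0$ for $j=1,\dots,k$ away from the corner (see \eqref{q111}--\eqref{q211}), then mollifying near the corner with a carefully chosen $\epsilon(\ell,n)$ and verifying that the mollified function $\widetilde V_\ell$ still satisfies $\widetilde V_\ell''+\widetilde V_\ell'/r\geq0$, so that $\widetilde U_\ell(x)=\widetilde V_\ell(|x|)$ is a bona fide $k$-admissible function converging to $u_m$ in $\|\cdot\|_{\Phi,w_1}$. This verification is the main extra content of Theorem~\ref{thm2} over Proposition~\ref{prop-thm2}, and it is highlighted as the principal difficulty in the paper's ``General strategy of the proofs.'' Your proposal omits it entirely; without it, the necessity direction is only established for the transported problem, not for the stated theorem.
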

Theorems~\ref{thm1}, \ref{thm1B}, and \ref{thm2} extend previous results of Calanchi-Ruf~\cite{CALANCHI20151967,CALANCHI2015403} and Xue-Zhang-Zhu~\cite{XueZhangZhu2025} for the space of $k$-admissible functions $\Phi^{k}_{0,\mathrm{rad}}(B,w)$.

Our next result concerns the attainability of the Trudinger–Moser type \eqref{S-attainable} in the critical regime $\alpha=\alpha_{n,\beta}$ and $w=w_0(x)=(\ln \frac{1}{|x|})^{\frac{\beta n}{2}}$.
\begin{theo}\label{thm-extremal} Let $w=w_0(x)=(\ln \frac{1}{|x|})^{\frac{\beta n}{2}}$ in \eqref{S-attainable}. Then, there exists $\beta_0\in (0,1)$ such that $\mathrm{MT}(n,\alpha_{n,\beta},\beta)$ is attained for any $\beta\in [0,\beta_0)$.
\end{theo}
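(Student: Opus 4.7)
My plan follows the classical Carleson--Chang/Lin strategy adapted to the log-weighted setting, combined with the one-dimensional reduction used throughout the paper. The core idea is the usual blow-up-versus-compactness dichotomy for a maximizing sequence, together with a test-function construction that rules out concentration when $\beta$ is small.

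\emph{Step 1 (setup and reduction).} Let $(u_j)\subset\Sigma$ be a maximizing sequence for $\mathrm{MT}(n,\alpha_{n,\beta},\beta)$. Writing $u_j(x)=v_j(|x|)$ and performing the change of variables $t=\ln(1/r)$ (as done throughout the paper to convert the norm \eqref{normPhi} into a one-dimensional weighted problem), I would work with the corresponding one-variable maximizers $\widetilde v_j$ on $(0,\infty)$. Up to passing to a subsequence, we may assume $u_j\rightharpoonup u_0$ in $\Phi^{k}_{0,\mathrm{rad}}(B,w_0)$, $u_j\to u_0$ a.e., and $\|u_j\|_{\Phi,w_0}\to L\in[0,1]$. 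Because $u_0\in\Sigma$ as well (by weak lower semicontinuity of the norm), we are done if $u_0$ already attains the supremum; the real question is to exclude vanishing and concentration.

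\emph{Step 2 (the compactness alternative).} I would establish a Lions-type alternative: either $\|u_0\|_{\Phi,w_0}>0$, or $u_j\to 0$ in $L^{q}(B)$ for every $q<\infty$ and the mass $|\nabla u_j|^{k+1}w_0\,r^{n-k}\,\mathrm{d}r$ concentrates at the origin (the only possible concentration point for radial functions). In the first case I would use the subcritical part of Theorem~\ref{thm1}(b), applied with exponent slightly below $\alpha_{n,\beta}$ on $u_j-u_0$ (which satisfies $\limsup\|u_j-u_0\|_{\Phi,w_0}^{k+1}\le 1-\|u_0\|_{\Phi,w_0}^{k+1}<1$, a weighted analogue of the Brezis--Lieb identity for the $k$-Hessian norm, which I would verify directly from \eqref{normPhi} since integrands are $(k+1)$-homogeneous in $v'$), together with a Vitali convergence argument, to show $\int_B e^{\alpha_{n,\beta}|u_j|^{\gamma_{n,\beta}}}\,\mathrm{d}x\to\int_B e^{\alpha_{n,\beta}|u_0|^{\gamma_{n,\beta}}}\,\mathrm{d}x$; hence $u_0$ is an extremal.

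\emph{Step 3 (concentration upper bound).} This is the technical heart. Assuming concentration at $0$, I would obtain an explicit Carleson--Chang-type bound
\begin{equation*}
  \mathrm{MT}(n,\alpha_{n,\beta},\beta)\le |B|+C_{n,\beta},
\end{equation*}
where $C_{n,\beta}$ is computed by linearizing the weighted functional along the normalized blow-up profile. Concretely, in the $t=\ln(1/r)$ variable the weight $w_0$ becomes $t^{\beta n/2}$, and the extremal Moser profiles take the form $M_L(t)=L^{-1/\gamma_{n,\beta}}\int_0^{\min(t,L)}s^{-\beta n/(2(k+1))}\,\mathrm{d}s$ (appropriately normalized). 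A standard estimate on $\int e^{\alpha_{n,\beta}|u_j|^{\gamma_{n,\beta}}}$ over the concentration region, combined with the subcritical control outside, yields the bound above. This mirrors what de Oliveira--do \'O--Ruf~\cite{zbMATH07238535} do for $\beta=0$.

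\emph{Step 4 (test functions beating the concentration bound).} I would then construct an explicit family $\phi_\varepsilon\in\Sigma$ of Moser-concentrating functions adapted to the log-weight (essentially truncations/normalizations of $M_L$ above as $L\to\infty$), and compute
\begin{equation*}
  \int_{B}\mathrm{e}^{\alpha_{n,\beta}|\phi_\varepsilon|^{\gamma_{n,\beta}}}\,\mathrm{d}x = |B|+D_{n,\beta}+o(1),
\end{equation*}
with an explicit $D_{n,\beta}$ depending continuously on $\beta$. At $\beta=0$ one has $D_{n,0}>C_{n,0}$ by the computation in~\cite{zbMATH07238535}. By continuity in $\beta$ there exists $\beta_0\in(0,1)$ such that $D_{n,\beta}>C_{n,\beta}$ for all $\beta\in[0,\beta_0)$. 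This strict inequality rules out the concentration alternative in Step~2 and forces $\|u_0\|_{\Phi,w_0}>0$, producing the extremal.

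\emph{Main obstacle.} The delicate point is Step~3: the precise evaluation of $C_{n,\beta}$ requires a careful blow-up analysis of the normalized sequence in the weighted variable, and the $\beta$-dependence of both the critical exponent $\alpha_{n,\beta}$ and the weight conspire so that the concentration bound is not trivially beaten. The restriction $\beta<\beta_0$ is precisely the range in which the perturbative comparison $D_{n,\beta}>C_{n,\beta}$, inherited by continuity from the unweighted case, can be guaranteed; obtaining an explicit optimal $\beta_0$ would likely require a much finer asymptotic analysis.
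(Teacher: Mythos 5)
Your outline reproduces the familiar concentration--compactness skeleton, but the two steps that actually carry the theorem are missing, and one of them is a step your strategy cannot simply borrow from the $\beta=0$ case. First, your Steps 3--4 require, for every fixed $\beta$, a Carleson--Chang-type concentration bound $C_{n,\beta}$ and a test-function expansion $D_{n,\beta}$, together with \emph{continuity of both quantities in $\beta$}; none of this is proved, and you yourself flag Step 3 as the main obstacle. The dependence on $\beta$ enters simultaneously through the weight $w_0$, the exponent $\gamma_{n,\beta}$ and the critical constant $\alpha_{n,\beta}$, so continuity of the concentration level in $\beta$ is exactly the delicate point. The paper avoids it altogether: it compares the transported supremum $\mathcal{MT}(n,\alpha_{n,\beta},\beta)$ with a concentration level $J^{\delta}_{n,\beta}(0)$, uses the known strict inequality at $\beta=0$ (Lemma~\ref{S-classic} together with the bound \eqref{b-c} from earlier work, so no new blow-up analysis is performed), proves that \emph{both} $\beta\mapsto\mathcal{MT}(n,\alpha_{n,\beta},\beta)$ and $\beta\mapsto J^{\delta}_{n,\beta}(0)$ are monotone decreasing via Nguyen's change of variables \eqref{changeV1} (Lemmas~\ref{C-change}, \ref{monotonicityMT}, \ref{monotonicity}), and only needs the one-sided continuity $\liminf_{\beta\to0}\mathcal{MT}(n,\alpha_{n,\beta},\beta)\ge\mathcal{MT}(n,\alpha_{n,0},0)$, obtained by Fatou with an explicit approximating family (Lemma~\ref{APPROX}). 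If you insist on your route, you must either carry out the full blow-up analysis for each $\beta$ or supply a substitute for the monotonicity argument; "by continuity in $\beta$" is not available for free.

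Second, even granting Steps 2--4, you would only have produced an extremal for the relaxed radial problem, not for $\mathrm{MT}(n,\alpha_{n,\beta},\beta)$ in \eqref{S-attainable}, because admissibility is never addressed. The set $\Phi^{k}_{0,\mathrm{rad}}(B,w_0)$ is a closure of a cone of $C^2$ $k$-admissible functions, not a linear space: your Step 1 assertion that the weak limit $u_0$ lies in $\Sigma$ tacitly assumes weak closedness of this cone, and your concentrating test functions $\phi_\varepsilon$ (Moser-type profiles, which are not $C^2$ and not obviously $k$-admissible) must also be shown to belong to it --- in the paper this requires the explicit mollification constructions of Section~\ref{section4}, where positivity of $S_j(D^2\cdot)$ is verified by hand. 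More importantly, the paper's proof of Theorem~\ref{thm-extremal} hinges on Section~\ref{section6}: starting from the maximizer $v$ of the transported problem (Proposition~\ref{extremalX}), it derives the Euler--Lagrange identity \eqref{integralsolution}, proves $v\in C^2[0,1]$ and monotonicity (Lemmas~\ref{0-beha}, \ref{vc2}), and only then verifies $S_j(D^2u)\ge0$ for $u(x)=-v(|x|)$ (Lemma~\ref{k-admi}), which is what upgrades attainability of $\mathcal{MT}$ to attainability of $\mathrm{MT}$. This recovery step --- highlighted by the authors as the main difficulty of the whole approach --- has no counterpart in your proposal, so as written the argument does not yield the statement of the theorem.
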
 
Theorem~\ref{thm-extremal} extends previous results by Roy\cite{zbMATH07074659,zbMATH06562450} and Nguyen \cite{zbMATH07122708} for the $k$-Hessian equation framework and improves \cite{zbMATH07238535} even within the $k$-Hessian scenario. 

\subsection{Log-weighted Hardy-type inequalities} In this section, we will present Hardy-type inequalities with logarithmic weights. Notably, the results stated below emerge as a byproduct of a refined characterization of log-weighted Hardy-type inequalities, which improve and complement previous findings in \cite{zbMATH00046945}. See Propositions \ref{hardy1}, \ref{propOK2}, \ref{propOK3}, \ref{hardy1e}, \ref{propOK2e}, \ref{propOK3e} in Section~\ref{section2} for a more general result that still incorporates a logarithmic weight in the left-hand integral.

For $R>0$, let us denote by $AC_R(0,R)$ as the set of all locally absolutely continuous functions $v\colon(0,R)\to\mathbb R$ satisfying the right-hand homogeneous boundary condition $\lim_{r\to R}v(r)=0$. Our next two theorems completely characterize the conditions under which the weighted Hardy-type inequality holds for functions in $AC_R(0,R)$, covering both cases $w=w_0$ and $w=w_1$. 

\begin{theo}\label{theo21}
Let $v\in AC_R(0,R)$ with $0<R<\infty$. Given $\alpha,\beta,n,k,p\in\mathbb R$ with $k\geq0$ and $p\geq1$, the inequality
\begin{equation}\label{aosfnas}
\left(\int_0^Rr^\alpha|v|^p\,\mathrm{d} r\right)^{\frac1p}\leq C\left(\int_0^Rr^{n-k}\left(\ln\frac{1}{r}\right)^{\frac{\beta n}2}|v^{\prime}|^{k+1}\,\mathrm{d} r\right)^{\frac{1}{k+1}}
\end{equation}
holds for some constant $C=C(\alpha,\beta,n,k,p,R)>0$ if, and only if, one of the following conditions is fulfilled:
\begin{flushleft}
    $\mathrm{(i)}$ $\alpha>-1$, $\beta\geq0$, $n<0$, and $k=0$;\\
    $\mathrm{(ii)}$ $\alpha>-1$, $n=0$, and $k=0$;\\
    $\mathrm{(iii)}$ $\alpha>-1$, $\beta=0$, $n>0$, $k=0$, and $p\leq\frac{\alpha+1}n$;\\
    $\mathrm{(iv)}$ $\alpha>-1$, $\beta n<2k$, $n\leq 2k$, and $k>0$;\\
    $\mathrm{(v)}$ $\alpha>-1$, $\beta n<2k$, $n>2k$, $k>0$, and $p<\frac{(\alpha+1)(k+1)}{n-2k}$;\\
    $\mathrm{(vi)}$ $\alpha>-1$, $0\leq\beta n<2k$, $n>2k$, $k>0$, and $k+1\leq p=\frac{(\alpha+1)(k+1)}{n-2k}$;\\
    $\mathrm{(vii)}$ $\alpha>-1$, $\frac{2k+2}p-2<\beta n<2k$, $n>2k$, $k>0$, and $p=\frac{(\alpha+1)(k+1)}{n-2k}<k+1$.
\end{flushleft}
\end{theo}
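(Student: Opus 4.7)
The plan is to reduce \eqref{aosfnas} to a classical one-dimensional weighted Hardy inequality and then apply the Muckenhoupt--Bradley characterization, supplemented by its Maz'ya--Sawyer variant when the exponents $p$ and $k+1$ are out of order. Using the boundary condition $\lim_{r \to R} v(r) = 0$, we write $|v(r)| \leq \int_r^R |v'(s)|\,\mathrm{d} s$, so \eqref{aosfnas} becomes the standard Hardy inequality
\begin{equation*}
\left(\int_0^R r^\alpha \Bigl(\int_r^R f(s)\,\mathrm{d} s\Bigr)^{\!p}\,\mathrm{d} r\right)^{\!1/p} \leq C\left(\int_0^R r^{n-k}\Bigl(\ln\tfrac{1}{r}\Bigr)^{\!\beta n/2} f(r)^{k+1}\,\mathrm{d} r\right)^{\!1/(k+1)}
\end{equation*}
for $f = |v'| \geq 0$. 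A test function $v$ which is constant on $(0,R/2)$ and smoothly vanishes at $R$ forces $\alpha > -1$ in every case.

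For $k > 0$ and $p \geq k+1$ the Muckenhoupt--Bradley criterion characterizes the above Hardy inequality by the finiteness of
\begin{equation*}
\mathcal{A}:=\sup_{0<t<R}\left(\int_0^t r^\alpha\,\mathrm{d} r\right)^{\!1/p}\left(\int_t^R r^{(k-n)/k}\Bigl(\ln\tfrac{1}{r}\Bigr)^{\!-\beta n/(2k)}\mathrm{d} r\right)^{\!k/(k+1)};
\end{equation*}
for $k > 0$ and $p < k+1$ the corresponding Maz'ya--Sawyer integral criterion, with index $\tfrac{1}{\rho} = \tfrac{1}{p} - \tfrac{1}{k+1}$, takes over. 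For $k = 0$ one argues directly via the pointwise bound $|v(r)| \leq \bigl(\mathrm{ess\,sup}_{(r,R)} V^{-1}\bigr) \int_r^R V |v'|\,\mathrm{d} s$, splitting by the sign of $n$ to isolate cases (i)--(iii). The first factor of $\mathcal{A}$ behaves as $t^{(\alpha+1)/p}$, and the second factor, as $t \to 0$, is uniformly bounded when $n < 2k$ (with $\beta n < 2k$ controlling the log behaviour at $r$ close to $R$), grows logarithmically when $n = 2k$, and behaves as $t^{(2k-n)/k}$ modulo logarithmic corrections when $n > 2k$. Combining these with the polynomial factor $t^{(\alpha+1)/p}$ yields case (iv) and the subcritical case (v).

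At criticality $p = (\alpha+1)(k+1)/(n-2k)$ the polynomial factors in $\mathcal{A}$ balance exactly, so the entire burden falls on the logarithmic weight: for $p \geq k+1$ (case (vi)) Muckenhoupt--Bradley boundedness reduces to $\beta n \geq 0$, while for $p < k+1$ (case (vii)) the Maz'ya--Sawyer criterion reduces to a double-logarithmic integrability condition which is finite precisely when $\beta n > \tfrac{2k+2}{p} - 2$. Necessity at each boundary is established by inserting explicit power-log test functions, e.g.\ $v(r) = r^{-\gamma} - R^{-\gamma}$ at the polynomial thresholds and $v(r) = \bigl(\ln\tfrac{1}{r}\bigr)^{\theta} - \bigl(\ln\tfrac{1}{R}\bigr)^{\theta}$ at the logarithmic ones, with exponents tuned so that the right-hand side of \eqref{aosfnas} stays finite while the left-hand side diverges. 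The main obstacle is case (vii): here the supremum condition is insufficient, and the Maz'ya--Sawyer integral must be evaluated carefully at both endpoints to extract the sharp threshold $\beta n > \tfrac{2k+2}{p} - 2$; this is the most delicate quantitative step of the case analysis, and the precise form of the exponent $\tfrac{2k+2}{p}-2$ is what makes case (vii) genuinely different from all the others.
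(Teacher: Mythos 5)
Your proposal is correct and follows essentially the same route as the paper: both reduce \eqref{aosfnas} to a one-dimensional weighted Hardy inequality, characterize that inequality via the Muckenhoupt-type supremum criterion for $p \geq k+1$ and the Maz'ya-type integral criterion for $p < k+1$ (these are exactly Opic--Kufner's Theorems~6.2 and 6.3, which the paper cites), handle the $k=0$ case via the dual essential-supremum norm, and then read off the seven parameter regimes from the asymptotics of the resulting weight integrals as $t\to 0$ and $t\to R$, including the critical-index threshold $\beta n > \frac{2k+2}{p}-2$ in case (vii). The paper packages the asymptotic bookkeeping into intermediate Propositions~\ref{hardy1}--\ref{propOK3}, which additionally allow a logarithmic weight $\bigl(\ln\frac{R}{r}\bigr)^{\theta}$ on the left-hand side before specializing to $\theta=0$; this extra generality is of independent interest but does not change the substance of the argument for Theorem~\ref{theo21}.
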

\begin{theo}\label{theo21e}
Let $v\in AC_R(0,R)$ with $0<R<\infty$. Given $\alpha,\beta,n,k,p\in\mathbb R$ with $k\geq0$ and $p\geq1$, the inequality
\begin{equation}\label{aosfnase}
\left(\int_0^Rr^\alpha|v|^p\,\mathrm{d} r\right)^{\frac1p}\leq C\left(\int_0^Rr^{n-k}\left(\ln\frac{\mathrm{e}}{r}\right)^{\frac{\beta n}2}|v^{\prime}|^{k+1}\,\mathrm{d} r\right)^{\frac{1}{k+1}}
\end{equation}
holds for some constant $C=C(\alpha,\beta,n,k,p,R)>0$ if, and only if, one of the following conditions is fulfilled:
\begin{flushleft}
    $\mathrm{(i)}$ $\alpha>-1$, $n\leq0$, and $k=0$;\\
    $\mathrm{(ii)}$ $\alpha>-1$, $n\leq2k$, and $k>0$;\\
    $\mathrm{(iii)}$ $\alpha>-1$, $\beta\leq0$, $0<n<\frac{\alpha+1}p$ and $k=0$;\\
    $\mathrm{(iv)}$ $\alpha>-1$, $\beta=0$, $n=\frac{\alpha+1}p$ and $k=0$;\\
    $\mathrm{(v)}$ $\alpha>-1$, $\beta>0$, $n>0$, and $k=0$;\\
    $\mathrm{(vi)}$ $\alpha>-1$, $n>2k$, $k>0$, and $p<\frac{(\alpha+1)(k+1)}{n-2k}$;\\
    $\mathrm{(vii)}$ $\alpha>-1$, $\beta\geq0$, $n>2k$, $k>0$, and $p=\frac{(\alpha+1)(k+1)}{n-2k}$.
\end{flushleft}
\end{theo}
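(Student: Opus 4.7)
The plan is to derive Theorem~\ref{theo21e} by specializing the general log-weighted Hardy inequalities of Propositions~\ref{hardy1e}, \ref{propOK2e}, and \ref{propOK3e}, which allow an additional logarithmic factor on the left-hand integral as well. Setting that extra log-exponent to zero, the seven regimes (i)--(vii) should match, one-by-one, the regimes covered by those auxiliary propositions, so the argument becomes essentially a bookkeeping exercise once the correspondence is set up.

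First, I would recast \eqref{aosfnase} as a classical one-dimensional Hardy inequality for $g=v'$. Using $v(R)=0$ to write $v(r)=-\int_r^R g(s)\,\mathrm{d}s$, the inequality \eqref{aosfnase} is equivalent to
\begin{equation*}
\left(\int_0^R r^\alpha\Bigl|\int_r^R g(s)\,\mathrm{d}s\Bigr|^p\,\mathrm{d}r\right)^{1/p}\le C\left(\int_0^R r^{n-k}\Bigl(\ln\frac{\mathrm{e}}{r}\Bigr)^{\frac{\beta n}{2}}|g|^{k+1}\,\mathrm{d}r\right)^{1/(k+1)},
\end{equation*}
to which the Muckenhoupt--Bradley criterion applies. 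It splits into two sub-cases: if $p\ge k+1$, sufficiency reduces to the finiteness of the supremum over $r\in(0,R)$ of
\begin{equation*}
\Bigl(\int_0^r t^\alpha\,\mathrm{d}t\Bigr)^{1/p}\Bigl(\int_r^R t^{-(n-k)/k}\bigl(\ln(\mathrm{e}/t)\bigr)^{-\beta n/(2k)}\,\mathrm{d}t\Bigr)^{k/(k+1)},
\end{equation*}
and if $p<k+1$ an integral analogue replaces this supremum. The degenerate case $k=0$ is handled directly by Fubini.

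Second, I would evaluate these Muckenhoupt integrals. The key observation, which distinguishes this result from Theorem~\ref{theo21}, is that $\ln(\mathrm{e}/r)=1+\ln(1/r)$ is positive and bounded near $r=R$ for any admissible $\beta$, so singular behaviour arises only at $r=0$, where $\ln(\mathrm{e}/r)\sim\ln(1/r)$. The analysis therefore parallels that of Theorem~\ref{theo21} near the origin, but the degeneracy at $r=1$ (where $\ln(1/r)$ vanishes and non-integer negative powers of it become singular) is absent. This is why the conditions listed here are uniformly milder; most notably, in case (vii) the sharp lower bound $\beta n>\frac{2k+2}{p}-2$ from Theorem~\ref{theo21}(vii) collapses to the single requirement $\beta\ge 0$.

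Third, necessity is checked by testing \eqref{aosfnase} on explicit families. For the Sobolev-type constraints (iii)--(vi), truncated power functions $v_\varepsilon(r)=r^{-a}-R^{-a}$, cut off near the endpoints with $a$ slightly below $(n-2k)/(k+1)$, force the critical exponent $p\le(\alpha+1)(k+1)/(n-2k)$. For case (vii) and the neighbouring borderline conditions, Moser-type functions of the form $v_\varepsilon(r)=(\ln(\mathrm{e}/r))^{\gamma}$ with $\gamma$ tuned so that the right-hand side of \eqref{aosfnase} stays finite as $\varepsilon\to 0^+$ pin down the exact constraints on $\beta$. The hardest step is this critical regime: the Muckenhoupt integral becomes borderline logarithmic and the precise power of $\ln(\mathrm{e}/r)$ produced must be tracked with care -- exactly what Proposition~\ref{propOK3e} is designed to handle. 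Once that proposition is in hand, the regime-by-regime identification completes the proof.
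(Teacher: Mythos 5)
Your overall strategy coincides with the paper's: substitute $q=k+1$, $\theta=0$, $\mu=\tfrac{\beta n}{2}$, $\nu=n-k$ into Propositions~\ref{hardy1e}, \ref{propOK2e}, and \ref{propOK3e} (whose proofs are precisely the Muckenhoupt--Bradley/Opic--Kufner reductions and endpoint asymptotics you sketch) and then sort the cases $k=0$, $p\ge k+1$, $p<k+1$. Since those propositions are stated as equivalences, your separate necessity step via test functions is redundant, though harmless, and your remark that only $r=0$ is a singular endpoint correctly explains why the restrictions $\mu<q-1$ (i.e.\ $\beta n<2k$) of Theorem~\ref{theo21} disappear here.

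There is, however, a concrete gap at exactly the point you single out as the hardest: the critical exponent $p=\frac{(\alpha+1)(k+1)}{n-2k}$ with $p<k+1$. You claim the lower bound $\beta n>\frac{2k+2}{p}-2$ of Theorem~\ref{theo21}(vii) ``collapses to $\beta\ge0$'' because $\ln(\mathrm{e}/r)$ is benign near $r=R$. But that lower bound is not an $r=R$ effect; it is generated at the origin, where $\ln(1/r)$ and $\ln(\mathrm{e}/r)$ are equivalent. Indeed, for $p<q=k+1$ the relevant criterion is the Maz'ya-type integral condition \eqref{eq311e}, whose integrand at the critical exponent behaves like $x^{-1}\bigl(\ln\tfrac{\mathrm{e}R}{x}\bigr)^{-\frac{\mu p}{q-p}}$ as $x\to0$, so finiteness holds precisely when $p\bigl(\tfrac{\beta n}{2}+1\bigr)>k+1$ --- this is Proposition~\ref{propOK3e}(vi) with $\theta=0$ --- which is strictly stronger than $\beta\ge0$ when $p<k+1$; for instance with $\beta=0$ the inequality genuinely fails, as testing $v_a(r)=r^{-a}-R^{-a}$ with $a\uparrow\frac{n-2k}{k+1}$ shows (the left side blows up faster than the right). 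So carrying out your own bookkeeping honestly in this regime yields the condition $p\bigl(\tfrac{\beta n}{2}+1\bigr)>k+1$ rather than item~(vii) as you state it; the ``collapse to $\beta\ge0$'' is only valid for $p\ge k+1$, where it follows from Proposition~\ref{propOK2e}(vi). You would need to add this condition (or restrict the claim to $p\ge k+1$) for the regime-by-regime identification to close --- note that the paper's own Case~4, which maps Proposition~\ref{propOK3e}(vi) onto item~(vii), makes the same silent identification, so this discrepancy is worth resolving rather than reproducing.
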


For $p\ge 1$ and $\alpha>-1$, let us denote by $L^p_\alpha=L^p_\alpha(0,R)$ the Lebesgue weighted space of the Lebesgue measurable functions $v:(0,R)\to\mathbb{R}$ such that $$\|v\|_{L^{p}_{\alpha}}=\left(\int_{0}^{R}r^{\alpha}|v|^p\mathrm{d}r\right)^{\frac{1}{p}}<\infty.$$
Now, we define the weighted Sobolev space $X^{1,k+1}_{R,w}$ given by
\begin{equation*}
X^{1,k+1}_{R,w}=\mathrm{cl}\Big\{v\in AC_{R}(0,R) \;:\; \|v\|_{w}=\Big(c_{n}\int_{0}^{R}r^{n-k}|v^{\prime}|^{k+1}w\,\mathrm{d} r\Big)^{\frac{1}{k+1}}<\infty \Big\}
\end{equation*}
where the closure is taken with respect to the norm $\|\cdot\|_w$, $w:(0,R)\to\mathbb{R}$ is a weight function, and the constant $c_n$ is defined in \eqref{critical-constant}. 

Our next main result establishes embeddings from $X^{1,k+1}_{R,w}$ the logarithmic weighted Sobolev space into $L^p_\alpha$ the weighted Lebesgue space, provided that the following technical condition holds when $p$ is the critical exponent:
\begin{equation}\label{hip:criticalcase}
\left\{\begin{array}{ll}
\left\{\begin{array}{ll}
\beta\geq0,&\mbox{if }\alpha+1\geq n-2k,\\
\frac{\beta n}2>\frac{n-2k}{\alpha+1}-1,&\mbox{if }\alpha+1<n-2k,
\end{array}\right.,&\mbox{if }w(r)=\left(\ln\frac{1}{r}\right)^{\frac{\beta n}{2}},\\
\beta\geq0,&\mbox{if }w(r)=\left(\ln\frac{\mathrm{e}}{r}\right)^{\frac{\beta n}{2}}.
\end{array}\right.
\end{equation}
In fact, we have the following:
\begin{cor}\label{thm0}
Let $v\in X^{1,k+1}_{R,w}$ and $\alpha,\beta,n,p,k\in\mathbb R$ with $0<R<\infty$, $\alpha>-1$, $n\geq1$, $p\geq1$, and $k\geq1$. Additionally, in the case $w=w_0$, we assume $\beta n<2k$.
\begin{flushleft}
\noindent$\mathrm{(i)}$ If $k<\frac{n}2$, then the following continuous embedding holds
\begin{equation*}
X^{1,k+1}_{R,w}\hookrightarrow L^p_\alpha(0,R)\quad\mbox{for all }1\leq p\leq k^*:=\frac{(\alpha+1)(k+1)}{n-2k},
\end{equation*}
where we need to assume \eqref{hip:criticalcase} for $p=k^*$. Moreover, assuming $\alpha\geq0$, the embedding is compact for $1\leq p<k^*$;\\
$\mathrm{(ii)}$ If $k=\frac{n}2$, then the following continuous embedding holds
\begin{equation*}
X^{1,k+1}_{R,w}\hookrightarrow L^p_\alpha(0,R)\quad\mbox{for all }1\leq p<\infty.
\end{equation*}
Moreover, the embedding is compact if $\alpha\geq0$.
\end{flushleft}
\end{cor}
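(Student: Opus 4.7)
The plan is to derive this corollary directly from the sharp Hardy-type characterizations in Theorems~\ref{theo21} and \ref{theo21e}. Since $X^{1,k+1}_{R,w}$ is by definition the $\|\cdot\|_w$-closure of a subset of $AC_R(0,R)$, and both sides of \eqref{aosfnas} and \eqref{aosfnase} pass to the closure, a continuous embedding $X^{1,k+1}_{R,w}\hookrightarrow L^p_\alpha(0,R)$ is equivalent to the validity of the corresponding weighted Hardy inequality on $AC_R(0,R)$. The strategy therefore reduces to a case-check: for each regime in (i) and (ii), and for each of the two log-weights, I verify that the parameters $(\alpha,\beta,n,k,p)$ land in one of the sufficient cases of Theorem~\ref{theo21} or Theorem~\ref{theo21e}.

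For part (i), where $1\leq k<n/2$, any subcritical $1\leq p<k^*$ falls, with $w=w_0$, into case (v) of Theorem~\ref{theo21}, and with $w=w_1$ into case (vi) of Theorem~\ref{theo21e}. At the critical exponent $p=k^*$ the dichotomy in \eqref{hip:criticalcase} is designed to match cases (vi) and (vii) of Theorem~\ref{theo21}: the inequality $\alpha+1\geq n-2k$ is equivalent to $k^*\geq k+1$ (case (vi), requiring $\beta\geq 0$), while $\alpha+1<n-2k$ is equivalent to $k^*<k+1$ (case (vii)), whose constraint $\frac{2k+2}{p}-2<\beta n$ rewrites, via $(n-2k)/(\alpha+1)=(k+1)/p$, exactly as $\beta n/2>(n-2k)/(\alpha+1)-1$; for $w=w_1$ the critical exponent is case (vii) of Theorem~\ref{theo21e}, which only demands $\beta\geq 0$. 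For part (ii), where $k=n/2$, we have $n=2k$, so with $w=w_0$ we sit in case (iv) of Theorem~\ref{theo21} (under the standing hypothesis $\beta n<2k$), and with $w=w_1$ in case (ii) of Theorem~\ref{theo21e}; both hold for every $p\geq 1$.

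For the compactness claim, assume $\alpha\geq 0$ and let $\{v_j\}$ be a bounded sequence in $X^{1,k+1}_{R,w}$ with $v_j\rightharpoonup 0$. On each interval $[\delta,R]$ with $\delta>0$ the weights $r^{n-k}w(r)$ are bounded above and away from $0$, so a standard Rellich argument together with $r^\alpha\leq R^\alpha$ gives strong convergence $v_j\to 0$ in $L^p_\alpha(\delta,R)$. To control the tail near $r=0$, I would pick any $p'\in(p,k^*)$ in case (i) (in case (ii), any $p'>p$ works) and apply H\"older's inequality
\begin{equation*}
\int_0^\delta r^\alpha|v_j|^p\,\mathrm{d} r\leq\left(\int_0^\delta r^\alpha\,\mathrm{d} r\right)^{1-p/p'}\left(\int_0^\delta r^\alpha|v_j|^{p'}\,\mathrm{d} r\right)^{p/p'}\leq C\,\delta^{(\alpha+1)(1-p/p')}\|v_j\|_w^p,
\end{equation*}
where the last bound invokes the continuous embedding at $p'$ established above; sending $\delta\to 0$ gives uniform smallness in $j$, and combining the two estimates yields $v_j\to 0$ in $L^p_\alpha(0,R)$. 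I expect the only real obstacle to be bookkeeping: ensuring that the parameter range \eqref{hip:criticalcase} aligns term-by-term with cases (vi)/(vii) of Theorem~\ref{theo21} at the critical exponent, which is the step most prone to an off-by-one slip.
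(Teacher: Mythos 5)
Your case-check for the continuous embeddings is exactly what the paper does and is correct: for $k<n/2$ the subcritical exponents fall into case (v) of Theorem~\ref{theo21} ($w=w_0$) and case (vi) of Theorem~\ref{theo21e} ($w=w_1$), the critical exponent matches cases (vi)/(vii) of Theorem~\ref{theo21} and case (vii) of Theorem~\ref{theo21e} as you describe (your rewriting of $\tfrac{2k+2}{p}-2<\beta n$ via $\tfrac{k+1}{k^*}=\tfrac{n-2k}{\alpha+1}$ is right), and for $k=n/2$ one lands in case (iv) of Theorem~\ref{theo21} and case (ii) of Theorem~\ref{theo21e}. The density step (passing the Hardy inequality from $AC_R(0,R)$ to the closure) is also sound.

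Your compactness argument, however, takes a genuinely different route. The paper's proof works globally: it first obtains $L^1(0,R)$-Cauchyness from the compactness of $W^{1,k+1}(0,R)\hookrightarrow L^1(0,R)$, then interpolates $\|f\|_{L^p_\alpha}\leq\|r^{\alpha/p}f\|_{L^1}^\eta\|r^{\alpha/p}f\|_{L^{\widetilde p}}^{1-\eta}$ for some $p<\widetilde p<k^*$, using $\alpha\geq 0$ to compare $r^{\alpha/p}$ with $r^\alpha$ and feed in the continuous embedding into $L^{\widetilde p}_\alpha$. You instead split the domain: Rellich on $(\delta,R)$ and a H\"older tail $\int_0^\delta r^\alpha|v_j|^p\,\mathrm{d} r\leq C\delta^{(\alpha+1)(1-p/p')}\|v_j\|_w^p$. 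Your tail estimate is correct and, arguably, is a more elementary way to buy the uniform smallness that the paper's interpolation step achieves. The one place where your version has a gap is the Rellich step itself: your claim that ``$r^{n-k}w(r)$ is bounded above and away from $0$ on $[\delta,R]$'' fails for $w=w_0$ when the endpoint $R$ coincides with the singular point of the logarithm (i.e.\ $R=1$, the case actually used later), since then $w_0(r)=(\ln\tfrac1r)^{\beta n/2}\to 0$ as $r\to R$, so boundedness in $X^{1,k+1}_{R,w_0}$ does not control $\int_\delta^R|v_j'|^{k+1}\,\mathrm{d} r$. To close that gap you would also need a uniform control near $r=R$ — for instance, the pointwise bound of Lemma~\ref{r-estimate}/Corollary~\ref{corollary-radial} (or the analogous H\"older computation with $\int_r^R s^{-(n-k)/k}w_0^{-1/k}\,\mathrm{d}s$, finite precisely under the standing assumption $\beta n<2k$) shows $|v_j(r)|$ is uniformly small near $r=R$, and one can then split into three pieces $(0,\delta)\cup(\delta,\delta'')\cup(\delta'',R)$. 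With that patch your argument is complete; without it the $w=w_0$, $R=1$ case is not covered by a ``standard Rellich argument'' as written.
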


\subsection{General strategy of the proofs}
Our strategy for proving the main results consists of transporting the problem from the space of $k$-admissible functions $\Phi^k_{0,\mathrm{rad}}(B,w)$ to the weighted Sobolev spaces $X^{1,k+1}_{1,w}$. In this new framework, we establish the desired results (see Propositions~\ref{prop-thm1}, \ref{prop-thm2}, and \ref{extremalX}) by employing a theorem from Leckband \cite{zbMATH03867635}, along with certain radial lemmas and the Sobolev embedding for the space $X^{1,k+1}_{R,w}$. Specifically, Leckband's theorem is stated in Theorem~\ref{Leckband}, the radial lemmas in Corollary~\ref{corollary-radial}, and the Sobolev embeddings in Corollary~\ref{thm0}. After obtaining these intermediate results, we return to the space $\Phi^k_{0,\mathrm{rad}}(B,w)$.

The main challenge lies in the process of recovering the results from $X^{1,k+1}_{1,w}$ back to $\Phi^k_{0,\mathrm{rad}}(B,w)$. If $u\in \Phi^k_{0,\mathrm{rad}}(B,w)$, then it is immediate that $ v = u(|x|)$  belongs to $X^{1,k+1}_{1,w}$. However, the converse is not true in general. In fact, the reverse process involves regularity results, which were addressed by using an appropriate regularizing sequence to establish the optimality of the constants $\gamma=\gamma_{n,\beta}$, $\alpha=\alpha_{n,\beta}$ and $a=n$ (see Propositions \ref{prop-thm1} and \ref{prop-thm2} and Theorems \ref{thm1} and \ref{thm2}), as well as regularity estimates to recover the extremal problem (see Proposition~\ref{extremalX} and Theorem~\ref{thm-extremal}).

\subsection{Outline of the paper} 
The rest of the paper is divided as follows. Section \ref{section2} contains the demonstration of the Hardy inequalities, proving Theorems~\ref{theo21}, \ref{theo21e}, and Corollary~\ref{thm0}. In Section~\ref{section3}, we reformulate our problem by transferring it from the space of $k$-admissible functions $\Phi^k_{0,\mathrm{rad}}(B,w)$ to the weighted Sobolev spaces $X^{1,k+1}_{1,w}$ in which we prove the transported Trudinger-Moser inequalities in Propositions~\ref{prop-thm1} and \ref{prop-thm2}. Section~\ref{section4} is dedicated to proving Theorems \ref{thm1}, \ref{thm1B}, and \ref{thm2}, based on the results obtained in the previous section. Section~\ref{section5} focuses on proving the existence of maximizers for the transported problem in Proposition~\ref{extremalX}. Finally, in Section~\ref{section6}, we establish regularity results for the maximizers obtained in the previous section and prove Theorem~\ref{thm-extremal}.

\section{Hardy-type Inequalities}\label{section2}
Our main objective in this section is to provide a comprehensive analysis of Hardy inequalities for the weight functions $w$, considering both cases $w=w_0$ and $w=w_1$. In particular, we prove Theorems~\ref{theo21} and \ref{theo21e} using the results established in this section. The key approach involves applying \cite[Theorem 6.2 and 6.3]{zbMATH00046945} and developing an asymptotic behavior of the involved weights to precisely determine when the Hardy-type inequality holds. Before proceeding, let us first show that Corollary~\ref{thm0} is a direct consequence of Theorems~\ref{theo21} and \ref{theo21e}.
\begin{proof}[Proof of Corollary \ref{thm0}]
The continuous embeddings follow directly from items (iv), (v), (vi), and (vii) of Theorem \ref{theo21} for the case $w=w_0$, and from items (ii), (vi), and (vii) of Theorem \ref{theo21e} for the case $w=w_1$. 

To conclude the proof, we now demonstrate the compact embedding. Let $(v_m)$ be a bounded sequence in $X^{1,k+1}_{R,w}$. Since the embedding $W^{1,k+1}(0,R)\hookrightarrow L^1(0,R)$ is compact, it follows that, up to a subsequence, $(v_m)$ is Cauchy in $L^1(0,R)$. By applying an interpolation inequality, and noting that for any $p<\widetilde p<k^*$, there exists $\eta\in(0,1]$ such that
\begin{equation*}
\|v_m-v_{\widetilde m}\|_{L^p_{\alpha}}\leq\|r^{\frac{\alpha}{p}}(v_m-v_{\widetilde m})\|^\eta_{L^1}\|r^{\frac{\alpha}{p}}(v_m-v_{\widetilde m})\|^{1-\eta}_{L^{\widetilde p}}.
\end{equation*}
Dividing by $R^{\frac{\alpha}{p}}$ and using the fact that $\left(\frac{r}{R}\right)^{\frac{\alpha}{p}}\leq\left(\frac{r}{R}\right)^{\frac{\alpha}{\widetilde p}}\leq1$ for all $r\in(0,R)$, we obtain
\begin{align*}
\dfrac{\|v_m-v_{\widetilde m}\|_{L^p}}{R^{\frac{\alpha}{p}}}&\leq\|v_m-v_{\widetilde m}\|^\eta_{L^1}\|r^{\frac{\alpha}{\widetilde p}}(v_m-v_{\widetilde m})\|^{1-\eta}_{L^{\widetilde p}}.
\end{align*}
Here, we used the continuous embedding $X^{1,k+1}_{R,w}\hookrightarrow L^{\widetilde p}_\alpha(0,R)$. Therefore, $(v_m)$ is Cauchy in $L^p_\alpha(0,R)$, and consequently, it converges.
\end{proof}

\subsection{Case logarithmic with 1}

Remember that $AC_R(0,R)$ is the space of all locally absolutely continuous functions $v\colon(0,R)\to\mathbb R$ satisfying
\begin{equation*}
\lim_{r\to R}v(r)=0.
\end{equation*}

Let us assume $v\in AC_R(0,R)$, where $R\in(0,\infty)$. The main goal of this subsection is to check when
\begin{equation}\label{eqOK}
    \left(\int_0^R|v|^p\left(\ln\dfrac{R}{t}\right)^\theta t^\alpha \,\mathrm{d}t\right)^{\frac1p}\leq C\left(\int_0^R|v^{\prime}|^q\left(\ln\frac{R}{t}\right)^\mu t^\nu \,\mathrm{d}t\right)^{\frac1q}
\end{equation}
holds for some $C=C(\alpha,\theta,\nu,\mu,R,p,q)>0$, where $\alpha,\theta,\nu,\mu\in\mathbb R$ and $p,q\in[1,\infty)$.

\begin{prop}\label{hardy1}
Assume $v\in AC_{R}(0,R)$, $q=1$, and $1\leq p<\infty$. Given $\alpha,\theta,\nu,\mu\in\mathbb R$, the inequality \eqref{eqOK} holds if and only if one of the following conditions is fulfilled:
\begin{flushleft}
    $\mathrm{(i)}$ $\alpha=-1$, $\theta<-1$, $\nu=0$, and $\mu=\frac{\theta+1}p$;\\
    $\mathrm{(ii)}$ $\alpha\geq-1$, $\theta<-1$, $\nu<0$, and $\mu\leq\frac{\theta+1}p$;\\
    $\mathrm{(iii)}$ $\alpha>-1$, $\theta<-1$, $0\leq\nu\leq\frac{\alpha+1}p$, and $\frac{\theta}{p}\leq\mu\leq\frac{\theta+1}p$;\\
    $\mathrm{(iv)}$ $\alpha>-1$, $\theta=-1$, $0\leq\nu\leq\frac{\alpha+1}p$, and $-\frac{1}{p}\leq\mu<0$;\\
    $\mathrm{(v)}$ $\alpha>-1$, $\theta=-1$, $\nu<0$, and $\mu<0$;\\
    $\mathrm{(vi)}$ $\alpha>-1$, $-1<\theta\leq0$, $0\leq\nu\leq\frac{\alpha+1}p$, and $\frac{\theta}p\leq\mu\leq0$;\\
    $\mathrm{(vii)}$ $\alpha>-1$, $-1<\theta\leq0$, $\nu<0$, and $\mu\leq0$.
\end{flushleft}
\end{prop}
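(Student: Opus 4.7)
To prove Proposition~\ref{hardy1} I would invoke the classical Opic--Kufner characterization of one-dimensional weighted Hardy-type inequalities \cite[Theorem~6.2]{zbMATH00046945}. In the present setting ($q=1$, $1\le p<\infty$), that theorem reduces the validity of \eqref{eqOK} to the single quantitative criterion
\begin{equation*}
A:=\sup_{0<r<R}\Big(\int_0^r t^\alpha(\ln R/t)^\theta\,\mathrm{d} t\Big)^{\!1/p}\;\mathrm{ess\,sup}_{t\in(r,R)}\,(\ln R/t)^{-\mu}t^{-\nu}<\infty,
\end{equation*}
so the proof reduces to a careful asymptotic analysis of these two factors as $r\to 0^+$ and as $r\to R^-$.

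\textbf{Asymptotics of the two factors.} For the integral factor, I would first determine when $\int_0^R t^\alpha(\ln R/t)^\theta\,\mathrm{d} t$ converges at each endpoint: at $t=0$ this requires $\alpha>-1$, or else $\alpha=-1$ together with $\theta<-1$; at $t=R$, via the substitution $u=R-t$ (so $\ln R/t\sim u/R$), it requires $\theta>-1$. Sharp local asymptotics then follow by integration by parts, giving $\int_0^r t^\alpha(\ln R/t)^\theta\,\mathrm{d} t\sim\frac{r^{\alpha+1}}{\alpha+1}(\ln R/r)^\theta$ as $r\to 0^+$ when $\alpha>-1$, together with the exact identity $\int_0^r t^{-1}(\ln R/t)^\theta\,\mathrm{d} t=-\frac{(\ln R/r)^{\theta+1}}{\theta+1}$ when $\alpha=-1$ and $\theta<-1$. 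For the essential supremum, I would compute the logarithmic derivative $g'(t)/g(t)=t^{-1}(\mu/\ln R/t-\nu)$ of $g(t)=(\ln R/t)^{-\mu}t^{-\nu}$; since all seven listed cases force $\mu\le 0$, a sign analysis on $\mu$ and $\nu$ identifies whether the essential supremum on $(r,R)$ is attained at $t=r$, producing $(\ln R/r)^{-\mu}r^{-\nu}$; at $t=R$, yielding a finite value when $\mu<0$; or at an interior critical point when $\mu$ and $\nu$ have opposite signs, which contributes only a logarithmic correction.

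\textbf{Case analysis.} Multiplying the two asymptotics, $A<\infty$ becomes a condition matching powers of $r$ against $(\alpha+1)/p$ (the constraint on $\nu$) and powers of $\ln R/r$ against $\theta/p$ or $(\theta+1)/p$ (the constraint on $\mu$). I would organize the work into three blocks corresponding to $\theta<-1$ (cases (i)--(iii)), $\theta=-1$ (cases (iv)--(v)), and $-1<\theta\le 0$ (cases (vi)--(vii)); inside each block the subcases are further split according to whether $\nu$ is positive, zero, or negative. Necessity is obtained by observing which parameter range causes the product to blow up at either $t=0$ or $t=R$, thus ruling out the configurations outside the listed ones.

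\textbf{Main obstacle.} The principal difficulty is the proliferation of parameter regimes and the need to handle simultaneously the behavior at $t=0$ and $t=R$, together with possible interior extrema of $g$ when $\mathrm{sgn}(\mu)\ne\mathrm{sgn}(\nu)$. The borderline configurations ($\alpha=-1$, $\theta=-1$, $\mu=\theta/p$, $\mu=(\theta+1)/p$, or $\nu=(\alpha+1)/p$) require the most delicate logarithmic estimates; in particular the transitions between cases (iii) and (vi), and between (v) and (vii), hinge on matching both the correct power of $r$ and the correct power of the logarithm in the two factors simultaneously, which is where the gain or loss of a single unit in the exponent of $\ln R/r$ decides the validity of the inequality.
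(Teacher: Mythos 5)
Your plan follows the paper's own route exactly: invoke Opic--Kufner Theorem 6.2 to reduce \eqref{eqOK} to the finiteness of $\sup_x\|f^{1/p}\|_{L^p(0,x)}\|g^{-1}\|_{L^\infty(x,R)}$, derive the two-sided asymptotics of each factor as $x\to 0^+$ and $x\to R^-$, and then carry out the case split in $(\alpha,\theta,\nu,\mu)$. Your asymptotics for the integral factor are correct and match the paper's (the paper obtains them by passing to the upper incomplete gamma function, you by the endpoint analysis $\ln(R/t)\sim u/R$ near $t=R$, with integration by parts near $t=0$; either works).

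However, the sketch of the essential-supremum analysis contains a real error that would corrupt the subsequent case work if carried forward. Writing $h(t)=(\ln R/t)^{-\mu}t^{-\nu}$, the logarithmic derivative is $h'/h = t^{-1}\bigl(\mu/\ln(R/t)-\nu\bigr)$ as you say, so a stationary point in $(0,R)$ requires $\ln(R/t_0)=\mu/\nu>0$, i.e.\ $\mu$ and $\nu$ of the \emph{same} sign (and, given $\mu\le 0$, this forces $\mu<0$ and $\nu<0$). Your claim that the interior critical point occurs "when $\mu$ and $\nu$ have opposite signs" is backwards: for $\mu<0$, $\nu\ge 0$ the function $h$ is monotone decreasing and the supremum is attained at $t=x$, with no interior maximum. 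Similarly, the claim "at $t=R$, yielding a finite value when $\mu<0$" is wrong: if $\mu<0$ then $h(R)=0^{-\mu}R^{-\nu}=0$, so the supremum is never attained at $R$; the endpoint $t=R$ hosts the supremum only when $\mu=0$ and $\nu<0$, giving the constant $R^{-\nu}$. Finally, when both $\mu<0$ and $\nu<0$, the interior critical value $h(Re^{-\mu/\nu})$ is a fixed constant, not a "logarithmic correction." These points matter: the distinction between the $(\mu\le 0,\nu\ge 0)$ regime (sup at $x$, contributing powers of both $r$ and $\ln(R/r)$) and the $(\mu<0,\nu<0)$ regime (sup eventually constant) is precisely what separates the different endpoint constraints in items (ii)--(iii) versus (iv)--(v) and (vi)--(vii), and getting it wrong would lead to a different, incorrect, list of conditions.
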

\begin{proof}
    By \cite[Theorem 6.2]{zbMATH00046945}, \eqref{eqOK} holds if and only if
\begin{equation}\label{eq0011}
\sup_{x\in(0,R)}\|f^{\frac1p}\|_{L^p(0,x)}\|g^{-1}\|_{L^{\infty}(x,R)}<\infty,
\end{equation}
where $f(t)=(\ln\frac{R}{t})^\theta t^\alpha$ and $g(t)=(\ln\frac{R}{t})^\mu t^\nu$. We emphasize that $g^{-1}$ denotes the function $1/g$, not the inverse of $g$. Making the change of variables $r=\ln\frac{R}{t}$, we obtain
\begin{equation*}
\int_0^x\left(\ln\frac{R}{t}\right)^\theta t^\alpha \,\mathrm{d}t=R^{\alpha+1}\int_{\ln\frac{R}{x}}^{\infty}r^\theta \mathrm{e}^{-(\alpha+1)r}\,\mathrm{d} r.
\end{equation*}
Thus, applying the change of variables $s=(\alpha+1)r$ in the case of $\alpha>-1$,
\begin{equation}\label{eqr1}
\int_0^x\left(\ln\frac{R}{t}\right)^\theta t^\alpha \,\mathrm{d}t=\left\{\begin{array}{lll}
     -\frac{1}{\theta+1}\left(\ln\frac{R}{x}\right)^{\theta+1}&\mbox{if }\alpha=-1\mbox{ and }\theta<-1 \\
     \frac{R^{\alpha+1}}{(\alpha+1)^{\theta+1}}\displaystyle\int_{(\alpha+1)\ln\frac{R}{x}}^\infty s^{\theta}\mathrm{e}^{-s}\,\mathrm{d}s&\mbox{if }\alpha>-1\\
     \infty&\mbox{otherwise}.
\end{array}\right.
\end{equation}
To analyze the asymptotic behavior of the previous integral as $x\to0$ and $x\to R$, let us consider the upper incomplete gamma function $\Gamma\colon\mathbb R\times(0,\infty)\to\mathbb R$ defined by
\begin{equation*}
    \Gamma(\eta,y)=\int_y^\infty t^{\eta-1}\mathrm{e}^{-t}\,\mathrm{d}t.
\end{equation*}
The asymptotic behavior of $\Gamma(\eta,\cdot)$ is known to be 
    \begin{equation*}
        \Gamma(\eta,y)=O(y^{\eta-1}\mathrm{e}^{-y})\mbox{ as }y\to\infty
    \end{equation*}
    and
\begin{equation*}
\Gamma(\eta,y)=\left\{\begin{array}{lll}
     O(y^\eta)&\mbox{if }\eta<0  \\
     O(-\ln y)&\mbox{if }\eta=0 \\
     O(1)&\mbox{if }\eta>0.
\end{array}\right.\mbox{ as }y\to0.
\end{equation*} 
Using this information, we can derive the asymptotic behavior of \eqref{eqr1} in different scenarios. As $x\to R$:
\begin{equation}\label{eqr2}
\int_0^x\left(\ln\frac{R}{t}\right)^\theta t^\alpha \,\mathrm{d}t=\left\{\begin{array}{llll}
     O\left[\left(\ln\frac{R}{x}\right)^{\theta+1}\right]&\mbox{if }\alpha\geq-1\mbox{ and }\theta<-1  \\
     O\left[-\ln\left((\alpha+1)\ln\frac{R}{x}\right)\right]&\mbox{if }\alpha>-1\mbox{ and }\theta=-1\\
     O(1)&\mbox{if }\alpha>-1\mbox{ and }\theta>-1 \\
\infty&\mbox{otherwise}.
\end{array}\right.
\end{equation}
As $x\to 0$:
\begin{equation}\label{eqr3}
\int_0^x\left(\ln\frac{R}{t}\right)^\theta t^\alpha \,\mathrm{d}t=\left\{\begin{array}{lll}
     O\left[\left(\ln\frac{R}{x}\right)^{\theta+1}\right]&\mbox{if }\alpha=-1\mbox{ and }\theta<-1  \\
     O\left[\left(\ln\frac{R}{x}\right)^\theta x^{\alpha+1}\right]&\mbox{if }\alpha>-1 \\
\infty&\mbox{otherwise}.
\end{array}\right.
\end{equation}

Given $x\in(0,R)$ and $\nu,\mu\in\mathbb R$, we aim to determine the behavior of $\|g^{-1}\|_{L^{\infty}(x,R)}$ as $x\to0$ and as $x\to R$. Note that $\|g^{-1}\|_{L^{\infty}(x,R)}=\infty$ if $\mu>0$. Therefore, we only need to consider the case $\mu\leq0$. Consider the case when $\nu\geq0$. Here, we have that
\begin{equation*}
    g^{-1}(t)=\left(\ln\frac{R}{t}\right)^{-\mu}t^{-\nu}
\end{equation*}
is a non-increasing function. Thus, for $\nu\geq0$,
\begin{equation*}
\|g^{-1}\|_{L^{\infty}(x,R)}=g^{-1}(x).
\end{equation*}
Next, the case $\nu<0$ and $\mu=0$ is straightforward since:
\begin{equation*}
\|g^{-1}\|_{L^{\infty}(x,R)}=g^{-1}(R).
\end{equation*}
Hence, we can now focus on the situation where $\nu,\mu<0$. The derivative of $g^{-1}$ is given by
\begin{equation*}
    \left(g^{-1}\right)^{\prime}(t)=\left(\ln\frac{R}{t}\right)^{-\mu}t^{-\nu-1}\left(\dfrac{\mu}{\ln\frac{R}{t}}-\nu\right).
\end{equation*}
Setting the derivative to zero, we find that $t_0=R\mathrm{e}^{-\frac{\mu}{\nu}}\in(0,R)$ is the only critical point of $g^{-1}$ and it is a global maximum. Thus, assuming $\nu,\mu<0$, we have
\begin{equation*}
\|g^{-1}\|_{L^{\infty}(x,R)}=\left\{\begin{array}{ll}
     g^{-1}(x)&\mbox{if }x>R\mathrm{e}^{-\frac{\mu}{\nu}}  \\
     g^{-1}(R\mathrm{e}^{-\frac{\mu}{\nu}})&\mbox{if }x\leq R\mathrm{e}^{-\frac{\mu}{\nu}}. 
\end{array}\right.
\end{equation*}
Combining all the cases, we conclude:
\begin{equation}\label{eqr4}
\|g^{-1}\|_{L^{\infty}(x,R)}=\left\{\begin{array}{llll}
     \infty&\mbox{if }\mu>0  \\
     g^{-1}(x)&\mbox{if }\mu\leq0\mbox{ and }\nu\geq0\\
     g^{-1}(R)&\mbox{if }\mu=0\mbox{ and }\nu<0\\
     g^{-1}(x)&\mbox{if }\mu,\nu<0\mbox{ and }x>R\mathrm{e}^{-\frac{\mu}{\nu}}\\
     g^{-1}(R\mathrm{e}^{-\frac{\mu}{\nu}})&\mbox{if }\mu,\nu<0\mbox{ and }x\leq R\mathrm{e}^{-\frac{\mu}{\nu}}.
\end{array}\right.
\end{equation}

To study \eqref{eq0011} using \eqref{eqr2}, \eqref{eqr3}, and \eqref{eqr4}, we need to analyse the following twelve cases:
\begin{center}
\begin{tabular}{ |c|c|c|c| } 
 \hline
  & $\mu\leq0,\nu\geq0$ & $\mu=0,\nu<0$ & $\mu<0,\nu<0$\\
  \hline
 $\alpha=-1,\theta<-1$ & Case 1 & Case 2 & Case 3\\ 
 \hline
 $\alpha>-1,\theta<-1$ & Case 4 & Case 5 & Case 6\\ 
 \hline
 $\alpha>-1,\theta=-1$ & Case 7 & Case 8 & Case 9\\ 
 \hline
 $\alpha>-1,\theta>-1$ & Case 10 & Case 11 & Case 12\\ 
 \hline
\end{tabular}
\end{center}
We need to check the finiteness of \eqref{eq0011} in each one of these cases. To evaluate the supremum, we note that
\begin{equation*}
\eqref{eq0011}\mbox{ holds}\Leftrightarrow\left\{\begin{array}{l}
     \displaystyle\lim_{x\to R}\left(\displaystyle\int_0^x\left(\ln\frac{R}{t}\right)^\theta t^\alpha\,\mathrm{d}t\right)^{\frac{1}{p}}\|g^{-1}\|_{L^\infty(x,R)}<\infty  \\
     \displaystyle\lim_{x\to 0}\left(\displaystyle\int_0^x\left(\ln\frac{R}{t}\right)^\theta t^\alpha\,\mathrm{d}t\right)^{\frac{1}{p}}\|g^{-1}\|_{L^\infty(x,R)}<\infty.
\end{array}\right.
\end{equation*}
Summarizing the conclusions for all cases, we obtain:
\begin{center}
\begin{tabular}{|c|c|}
\hline
     Case 1& \eqref{eq0011} holds $\Leftrightarrow\alpha=-1$, $\theta<-1$, $\nu=0$, $\mu=\frac{\theta+1}q$ \\
     \hline
     Case 2& \eqref{eq0011} does not hold\\
     \hline
     Case 3& \eqref{eq0011} holds $\Leftrightarrow\alpha=-1$, $\theta<-1$, $\nu<0$, $\mu\leq\frac{\theta+1}p$ \\
     \hline
     Case 4& \eqref{eq0011} holds $\Leftrightarrow\alpha>-1$, $\theta<-1$, $0\leq\nu\leq\frac{\alpha+1}p$, $\frac{\theta}p\leq\mu\leq\frac{\theta+1}p$ \\
     \hline
     Case 5& \eqref{eq0011} does not hold\\
     \hline
     Case 6& \eqref{eq0011} holds $\Leftrightarrow\alpha>-1$, $\theta<-1$, $\nu<0$, $\mu\leq\frac{\theta+1}{p}$ \\
     \hline
     Case 7& \eqref{eq0011} holds $\Leftrightarrow\alpha>-1$, $\theta=-1$, $0\leq\nu\leq\frac{\alpha+1}p$, $-\frac{1}{p}\leq\mu<0$ \\
     \hline
     Case 8& \eqref{eq0011} does not hold\\
     \hline
     Case 9& \eqref{eq0011} holds $\Leftrightarrow\alpha>-1$, $\theta=-1$, $\nu<0$, $\mu<0$ \\
     \hline
     Case 10& \eqref{eq0011} holds $\Leftrightarrow\alpha>-1$, $-1<\theta\leq0$, $0\leq\nu\leq\frac{\alpha+1}p$, $\frac{\theta}{p}\leq\mu\leq0$ \\
     \hline
     Case 11& \eqref{eq0011} holds $\Leftrightarrow\alpha>-1$, $-1<\theta\leq0$, $\nu<0$, $\mu=0$ \\
     \hline
     Case 12& \eqref{eq0011} holds $\Leftrightarrow\alpha>-1$, $-1<\theta\leq0$, $\nu<0$, $\mu<0$ \\
     \hline
\end{tabular}
\end{center}
\bigskip 

Note the implications of each case:
\begin{itemize}
    \item Case 1 infers item (i);
    \item Case 3 and 6 infer item (ii);
    \item Case 4 infers item (iii);
    \item Case 7 infers item (iv);
    \item Case 9 infers item (v);
    \item Case 10 infers item (vi);
    \item Cases 11 and 12 infer item (vii).
\end{itemize}
This concludes the proof.
\end{proof}
\begin{prop}\label{propOK2}
Assume $v\in AC_{R}(0,R)$ and $1<q\leq p<\infty$. Given $\alpha,\theta,\nu,\mu\in\mathbb R$, the inequality \eqref{eqOK} holds if and only if $\mu<q-1$ and one of the following conditions is fulfilled:
\begin{flushleft}
    $\mathrm{(i)}$ $\alpha=-1$, $\theta<-1$, $\nu<q-1$, and $p\geq-\frac{q(\theta+1)}{q-1-\mu}$;\\
    $\mathrm{(ii)}$ $\alpha=-1$, $\theta\leq\mu-q$, $\nu=q-1$, and $p=-\frac{q(\theta+1)}{q-1-\mu}$;\\
    $\mathrm{(iii)}$ $\alpha>-1$, $\theta<-1$, $\nu\leq q-1$, and $p\geq-\frac{q(\theta+1)}{q-1-\mu}$;\\
    $\mathrm{(iv)}$ $\alpha>(\nu-q+1)\frac{p}{q}-1$, $\theta<-1$, $\nu>q-1$, and $p\geq-\frac{q(\theta+1)}{q-1-\mu}$;\\
    $\mathrm{(v)}$ $\alpha=(\nu-q+1)\frac{p}{q}-1$, $\theta<-1$, $\nu>q-1$, $p\geq-\frac{q(\theta+1)}{q-1-\mu}$, and $\frac{\theta}{p}\leq\frac{\mu}{q}$;\\
    $\mathrm{(vi)}$ $\alpha>-1$, $\theta\geq-1$, and $\nu\leq q-1$;\\
    $\mathrm{(vii)}$ $\alpha>(\nu-q+1)\frac{p}{q}-1$, $\theta\geq-1$, and $\nu>q-1$;\\
    $\mathrm{(viii)}$ $\alpha=(\nu-q+1)\frac{p}{q}-1$, $\theta\geq-1$, $\nu>q-1$, and $\frac{\theta}{p}\leq\frac{\mu}{q}$.
\end{flushleft}
\end{prop}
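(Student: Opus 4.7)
The strategy parallels that of Proposition \ref{hardy1}, but the relevant Muckenhoupt-type criterion is now \cite[Theorem 6.3]{zbMATH00046945}, valid in the range $1<q\leq p<\infty$. According to it, \eqref{eqOK} holds if and only if
\begin{equation*}
A:=\sup_{x\in(0,R)}\left(\int_0^x\left(\ln\frac{R}{t}\right)^\theta t^\alpha\,\mathrm{d}t\right)^{\frac{1}{p}}\left(\int_x^R\left(\ln\frac{R}{t}\right)^{-\frac{\mu}{q-1}}t^{-\frac{\nu}{q-1}}\,\mathrm{d}t\right)^{\frac{1}{q'}}<\infty,
\end{equation*}
where $q'=q/(q-1)$. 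The blanket condition $\mu<q-1$ is immediate: under the substitution $s=\ln(R/t)$, the integrand of the right factor is comparable to $s^{-\mu/(q-1)}$ as $s\to0^+$, so integrability near $t=R$ forces $-\mu/(q-1)>-1$.

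The proof then proceeds in two asymptotic steps. The first factor has already been analyzed in Proposition \ref{hardy1}, so formulas \eqref{eqr1}, \eqref{eqr2}, \eqref{eqr3} apply verbatim. For the second factor, the same substitution $s=\ln(R/t)$ produces $R^{1-\nu/(q-1)}\int_0^{\ln(R/x)}s^{-\mu/(q-1)}\mathrm{e}^{(\nu/(q-1)-1)s}\,\mathrm{d}s$, whose asymptotics are once again governed by the upper incomplete gamma function. Three regimes appear according to the sign of $\nu-q+1$: as $x\to0$, the integral converges to a constant when $\nu<q-1$, behaves like $(\ln(R/x))^{(q-1-\mu)/(q-1)}$ when $\nu=q-1$, and behaves like $x^{1-\nu/(q-1)}(\ln(R/x))^{-\mu/(q-1)}$ when $\nu>q-1$; as $x\to R$ it always behaves like $(\ln(R/x))^{(q-1-\mu)/(q-1)}$.

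With both tables of asymptotics in hand, $A<\infty$ is equivalent to the product of the leading asymptotic factors being bounded at $x\to0$ and at $x\to R$. The verification splits according to the regime of $(\alpha,\theta)$ (the twelve subcases of Proposition \ref{hardy1}) combined with the regime of $(\nu,\mu)$ (three subcases). A systematic examination matches each surviving configuration to exactly one of items (i)--(viii) and discards the rest.

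The main obstacle lies in the borderline regimes $\nu=q-1$ and $\alpha=(\nu-q+1)p/q-1$, where the product of the two asymptotic factors degenerates to pure powers of $\ln(R/x)$ or of $x$. The balance at $x\to R$ yields the condition $p\geq-q(\theta+1)/(q-1-\mu)$ appearing in items (i)--(v); in items (vi)--(viii) this condition is vacuous since $\theta\geq-1$. In item (ii), where $\alpha=-1$ and $\nu=q-1$, both endpoints contribute only logarithmic factors of the form $(\ln(R/x))^{(\theta+1)/p+(q-1-\mu)/q}$, so boundedness at both ends forces the equality $p=-q(\theta+1)/(q-1-\mu)$ together with the restriction $\theta\leq\mu-q$ (equivalent to $p\geq q$). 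Finally, in the critical cases $\alpha=(\nu-q+1)p/q-1$ of items (v) and (viii), the $x$-exponents cancel as $x\to0$, and boundedness of the remaining logarithmic factor produces the extra condition $\theta/p\leq\mu/q$. The rest is routine bookkeeping once these borderline comparisons have been carried out.
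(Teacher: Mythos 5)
Your proposal is correct and follows essentially the same route as the paper: apply the Muckenhoupt-type supremum criterion from Opic--Kufner, derive the asymptotic behavior of the two factors as $x\to0$ and $x\to R$ via the substitution $s=\ln(R/t)$ and incomplete-gamma estimates, and then combine the $(\alpha,\theta)$-regimes with the $\nu$-regimes to check boundedness in each of the twelve cases, handling the borderline cases ($\nu=q-1$, $\alpha=(\nu-q+1)p/q-1$) separately. One small slip: for $1<q\le p<\infty$ the relevant criterion is \cite[Theorem~6.2]{zbMATH00046945}, not Theorem~6.3 (the latter is the integral criterion used later for $p<q$ in Proposition~\ref{propOK3}); you do, however, write down the correct supremum condition, so the substance of your argument is unaffected.
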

\begin{proof}
    By \cite[Theorem 6.2]{zbMATH00046945}, \eqref{eqOK} holds if and only if
\begin{equation}\label{eq0012}
\sup_{x\in(0,R)}\|f^{\frac1p}\|_{L^p(0,x)}\|g^{-\frac{1}{q}}\|_{L^{\frac{q}{q-1}}(x,R)}<\infty,
\end{equation}
where $f(t)=(\ln\frac{R}{t})^\theta t^\alpha$ and $g(t)=(\ln\frac{R}{t})^\mu t^\nu$. In the proof of Proposition \ref{hardy1}, we obtained the asymptotic behavior of $\|f^{\frac{1}{p}}\|_{L^p(0,x)}$ in \eqref{eqr2} and \eqref{eqr3}. Thus, now we will study the asymptotic behavior of $\|g^{-\frac{1}q}\|_{L^{\frac{q}{q-1}}(x,R)}^{\frac{q}{q-1}}$. Making the change of variables $r=\ln\frac{R}{t}$, we obtain
\begin{equation*}
\int_x^R\left(\ln\frac{R}{t}\right)^{-\frac{\mu}{q-1}} t^{-\frac{\nu}{q-1}}\,\mathrm{d}t=R^{\frac{q-1-\nu}{q-1}}\int_{0}^{\ln\frac{R}{x}}r^{-\frac{\mu}{q-1}} \mathrm{e}^{\frac{\nu-q+1}{q-1}r}\,\mathrm{d} r.
\end{equation*}
Note that, for any $\mu\geq q-1$ and $x\in(0,R)$,
\begin{equation}\label{eqr5}
\int_x^R\left(\ln\frac{R}{t}\right)^{-\frac{\mu}{q-1}} t^{-\frac{\nu}{q-1}}\,\mathrm{d}t\geq R^{\frac{q-1-\nu}{q-1}}\inf_{r\in(0,\ln\frac{R}{x})}\left(\mathrm{e}^{\frac{\nu-q+1}{q-1}r}\right)\int_{0}^{\ln\frac{R}{x}}r^{-\frac{\mu}{q-1}} \,\mathrm{d} r=\infty.
\end{equation}
Then, applying the change of variables $s=\frac{\nu-q+1}{q-1}r$ in the case $\nu>q-1$ and $s=\frac{q-1-\nu}{q-1}r$ in the case $\nu<q-1$, we have, for any $\mu<q-1$,
\begin{equation*}
\int_x^R\left(\ln\frac{R}{t}\right)^{-\frac{\mu}{q-1}}t^{-\frac{\nu}{q-1}}\mathrm{d}t\!=\!\left\{\begin{array}{lll}
     R^{\frac{q-1-\nu}{q-1}}\left(\frac{q-1}{q-1-\nu}\right)^{\frac{q-1-\mu}{q-1}}\displaystyle\int_0^{\frac{q-1-\nu}{q-1}\ln\frac{R}{x}}s^{-\frac{\mu}{q-1}}\mathrm{e}^{-s}\mathrm ds&\mbox{if }\nu<q-1  \\
          \frac{q-1}{q-1-\mu}\left(\ln\frac{R}{x}\right)^{\frac{q-1-\mu}{q-1}}&\mbox{if }\nu=q-1\\
     R^{-\frac{\nu-q+1}{q-1}}\left(\frac{q-1}{\nu-q+1}\right)^{\frac{q-1-\mu}{q-1}}\displaystyle\int_0^{\frac{\nu-q+1}{q-1}\ln\frac{R}{x}}s^{-\frac{\mu}{q-1}}\mathrm{e}^s\mathrm ds&\mbox{if }\nu>q-1.
\end{array}\right.
\end{equation*}
To analyze the asymptotic behavior of the previous integral as $x\to0$ and $x\to R$, let us consider the lower incomplete gamma function $\gamma\colon(0,\infty)\times(0,\infty)\to\mathbb R$ and the function $h\colon(0,\infty)\times(0,\infty)\to\mathbb R$ defined by
\begin{equation*}
\gamma(\eta,y)=\int_0^yt^{\eta-1}\mathrm{e}^{-t}\,\mathrm{d}t\mbox{ and }h(\eta,y)=\int_0^yt^{\eta-1} \mathrm{e}^t\,\mathrm{d}t.
\end{equation*}
The asymptotic behavior of $\gamma(\eta,\cdot)$ and $h(\eta,\cdot)$ is given by the following. As $y\to\infty$:
\begin{equation*}
    \gamma(\eta,y)=O(1)\mbox{ and }h(\eta,y)=O(y^{\eta-1} \mathrm{e}^y).
\end{equation*}
As $y\to0$:
\begin{equation*}
    \gamma(\eta,y)=O(y^{\eta})\mbox{ and }h(\eta,y)=O(y^{\eta}).
\end{equation*}
Applying those definitions, note that, for any $\mu<q-1$,
\begin{equation*}
\int_x^R\left(\ln\frac{R}{t}\right)^{-\frac{\mu}{q-1}}t^{-\frac{\nu}{q-1}}\mathrm{d}t=\!\left\{\begin{array}{lll}
     R^{\frac{q-1-\nu}{q-1}}\left(\frac{q-1}{q-1-\nu}\right)^{\frac{q-1-\mu}{q-1}}\!\gamma\left(\frac{q-1-\mu}{q-1},\frac{q-1-\nu}{q-1}\ln\frac{R}{x}\right)&\mbox{if }\nu<q-1  \\
          \frac{q-1}{q-1-\mu}\left(\ln\frac{R}{x}\right)^{\frac{q-1-\mu}{q-1}}&\mbox{if }\nu=q-1\\
     R^{-\frac{\nu-q+1}{q-1}}\left(\frac{q-1}{\nu-q+1}\right)^{\frac{q-1-\mu}{q-1}}\!h\left(\frac{q-1-\mu}{q-1},\frac{\nu-q+1}{q-1}\ln\frac{R}{x}\right)&\mbox{if }\nu>q-1.
\end{array}\right.
\end{equation*}
Applying those and \eqref{eqr5}, we obtain the asymptotic behavior of $\|g^{-\frac1q}\|_{L^{\frac{q}{q-1}}(x,R)}^{\frac{q}{q-1}}$ given as follows. As $x\to R$:
\begin{equation}\label{eqr6}
\int_x^R\left(\ln\frac{R}{t}\right)^{-\frac{\mu}{q-1}}t^{-\frac{\nu}{q-1}}\,\mathrm{d}t=\left\{\begin{array}{ll}
     O\left[\left(\ln\frac{R}{x}\right)^{\frac{q-1-\mu}{q-1}}\right]&\mbox{if }\mu<q-1  \\
     \infty&\mbox{if }\mu\geq q-1. 
\end{array}\right.
\end{equation}
As $x\to0$:
\begin{equation}\label{eqr7}
\int_x^R\left(\ln\frac{R}{t}\right)^{-\frac{\mu}{q-1}}t^{-\frac{\nu}{q-1}}\,\mathrm{d}t=\left\{\begin{array}{ll}
     O(1)&\mbox{if }\mu<q-1,\nu<q-1\\
     O\left[\left(\ln\frac{R}{x}\right)^{\frac{q-1-\mu}{q-1}}\right]&\mbox{if }\mu<q-1,\nu=q-1\\
     O\left[\left(\ln\frac{R}{x}\right)^{-\frac{\mu}{q-1}}x^{\frac{q-1-\nu}{q-1}}\right]&\mbox{if }\mu<q-1,\nu>q-1\\
     \infty&\mbox{if }\mu\geq q-1. 
\end{array}\right.
\end{equation}

We already know that \eqref{eq0012} is infinite for $\mu \geq q-1$. To examine the finiteness of \eqref{eq0012} using \eqref{eqr2}, \eqref{eqr3}, \eqref{eqr6}, and \eqref{eqr7}, we need to analyze the following twelve cases (assuming $\mu<q-1$ in all scenarios):

\bigskip

\begin{center}
\begin{tabular}{ |c|c|c|c| } 
 \hline
  & $\nu<q-1$ & $\nu=q-1$ & $\nu>q-1$\\
  \hline
 $\alpha=-1,\theta<-1$ & Case 1 & Case 2 & Case 3\\ 
 \hline
 $\alpha>-1,\theta<-1$ & Case 4 & Case 5 & Case 6\\ 
 \hline
 $\alpha>-1,\theta=-1$ & Case 7 & Case 8 & Case 9\\ 
 \hline
 $\alpha>-1,\theta>-1$ & Case 10 & Case 11 & Case 12\\ 
 \hline
\end{tabular}
\end{center}

\bigskip 

We need to check the finiteness of \eqref{eq0012} in each one of these cases. To evaluate the supremum, we note that
\begin{equation*}
\eqref{eq0012}\mbox{ holds}\Leftrightarrow\left\{\begin{array}{l}
     \displaystyle\lim_{x\to R}\left(\displaystyle\int_0^x\left(\ln\frac{R}{t}\right)^\theta t^\alpha \,\mathrm{d}t\right)^{\frac{1}{p}}\left(\int_x^R\left(\ln\frac{R}{t}\right)^{-\frac{\mu}{q-1}}t^{-\frac{\nu}{q-1}}\,\mathrm{d}t\right)^{\frac{q-1}{q}}<\infty  \\
     \displaystyle\lim_{x\to 0}\left(\displaystyle\int_0^x\left(\ln\frac{R}{t}\right)^\theta t^\alpha \,\mathrm{d}t\right)^{\frac{1}{p}}\left(\int_x^R\left(\ln\frac{R}{t}\right)^{-\frac{\mu}{q-1}}t^{-\frac{\nu}{q-1}}\,\mathrm{d}t\right)^{\frac{q-1}{q}}<\infty.
\end{array}\right.
\end{equation*}
Summarizing the conclusions for all cases, assuming $\mu<q-1$ in each instance, we obtain:

\bigskip

\begin{center}
\begin{tabular}{|c|c|}
\hline
     Case 1& \eqref{eq0012} holds $\Leftrightarrow\alpha=-1$, $\theta<-1$, $\nu<q-1$, $p\geq-\frac{q(\theta+1)}{q-1-\mu}$ \\
     \hline
     Case 2& \eqref{eq0012} holds $\Leftrightarrow \alpha=-1$, $\theta\leq\mu-q$, $\nu=q-1$, $p=-\frac{q(\theta+1)}{q-1-\mu}$ \\
     \hline
     Case 3& \eqref{eq0012} does not hold \\
     \hline
     Case 4& \eqref{eq0012} holds $\Leftrightarrow\alpha>-1$, $\theta<-1$, $\nu<q-1$, $p\geq-\frac{q(\theta+1)}{q-1-\mu}$ \\
     \hline
     Case 5& \eqref{eq0012} holds $\Leftrightarrow\alpha>-1$, $\theta<-1$, $\nu=q-1$, $p\geq-\frac{q(\theta+1)}{q-1-\mu}$\\
     \hline
     Case 6& \eqref{eq0012} holds $\Leftrightarrow\theta<-1$, $\nu>q-1$, $p\geq-\frac{q(\theta+1)}{q-1-\mu}$, and\\ &($\alpha>(\nu-q+1)\frac{p}{q}-1$ or ($\alpha=(\nu-q+1)\frac{p}{q}-1$ and $\frac{\theta}{p}\leq\frac{\mu}q$)) \\
     \hline
     Case 7& \eqref{eq0012} holds $\Leftrightarrow\alpha>-1$, $\theta=-1$, $\nu<q-1$\\
     \hline
     Case 8& \eqref{eq0012} holds $\Leftrightarrow\alpha>-1$, $\theta=-1$, $\nu=q-1$\\
     \hline
     Case 9& \eqref{eq0012} holds $\Leftrightarrow\theta=-1$, $\nu>q-1$, and\\ &($\alpha>(\nu-q+1)\frac{p}{q}-1$ or ($\alpha=(\nu-q+1)\frac{p}{q}-1$ and $\frac{\theta}{p}\leq\frac{\mu}q$)) \\
     \hline
     Case 10& \eqref{eq0012} holds $\Leftrightarrow\alpha>-1$, $\theta>-1$, $\nu<q-1$ \\
     \hline
     Case 11& \eqref{eq0012} holds $\Leftrightarrow\alpha>-1$, $\theta>-1$, $\nu=q-1$\\
     \hline
     Case 12& \eqref{eq0012} holds $\Leftrightarrow\theta>-1$, $\nu>q-1$, and\\ &($\alpha>(\nu-q+1)\frac{p}{q}-1$ or ($\alpha=(\nu-q+1)\frac{p}{q}-1$ and $\frac{\theta}{p}\leq\frac{\mu}q$)) \\
     \hline
\end{tabular}
\end{center}

\bigskip

Note the implications of each case:
\begin{itemize}
    \item Case 1 infers item (i);
    \item Case 2 infers item (ii);
    \item Case 4 and 5 infer item (iii);
    \item Case 6 infer items (iv) and (v);
    \item Cases 7, 8, 10, and 11 infer item (vi);
    \item Cases 9 and 12 infer items (vii) and (viii).
\end{itemize}
This concludes the proof.
\end{proof}
\begin{prop}\label{propOK3}
Assume $v\in AC_{R}(0,R)$ and $1\leq p<q<\infty$. Given $\alpha,\theta,\nu,\mu\in\mathbb R$, the inequality \eqref{eqOK} holds if and only if $\mu<q-1$ and one of the following conditions is fulfilled:
\begin{flushleft}
    $\mathrm{(i)}$ $\alpha\geq-1$, $\theta<-1$, $\nu<q-1$, and $p>-\frac{q(\theta+1)}{q-1-\mu}$;\\
    $\mathrm{(ii)}$ $\alpha>-1$, $\theta<-1$, $\nu=q-1$, and $p>-\frac{q(\theta+1)}{q-1-\mu}$;\\
    $\mathrm{(iii)}$ $\alpha>-1$, $\nu>q-1$, and $-\frac{q(\theta+1)}{q-1-\mu}<p<\frac{(\alpha+1)q}{\nu-q+1}$;\\
    $\mathrm{(iv)}$ $\alpha>-1$, $\nu>q-1$, and $-\frac{q(\theta+1)}{q-1-\mu}<p=\frac{(\alpha+1)q}{\nu-q+1}$, and $p(\mu+1)>q(\theta+1)$;\\
    $\mathrm{(v)}$ $\alpha>-1$, $\theta\geq-1$, and $\nu\leq q-1$.
\end{flushleft}
\end{prop}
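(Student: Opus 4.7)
The plan is to follow the same overall blueprint as in Propositions~\ref{hardy1} and \ref{propOK2}, but because we now work in the range $p<q$, the applicable characterization from \cite{zbMATH00046945} is no longer the Muckenhoupt--Bradley supremum criterion (Theorem~6.2) used there but its integral counterpart (Theorem~6.3), valid for $1\leq p<q<\infty$ with $1/r=1/p-1/q$. This criterion reduces \eqref{eqOK} to the finiteness of an $L^{r}$-type integral $B$ built from a product of $\int_{0}^{x}f$ and $\int_{x}^{R}g^{-1/(q-1)}$, weighted against $g^{-1/(q-1)}(x)\,\mathrm{d}x$, where $f(t)=(\ln\tfrac{R}{t})^{\theta}t^{\alpha}$ and $g(t)=(\ln\tfrac{R}{t})^{\mu}t^{\nu}$. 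Exactly as in Proposition~\ref{propOK2}, the pointwise divergence of $\int_x^R g^{-1/(q-1)}$ when $\mu\geq q-1$ forces the restriction $\mu<q-1$ from the outset.

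The key observation is that the asymptotic behaviour of the two inner integrals has already been computed in \eqref{eqr2}--\eqref{eqr3} for $\int_{0}^{x}f$ and in \eqref{eqr6}--\eqref{eqr7} for $\int_{x}^{R}g^{-1/(q-1)}$. Substituting these leading-order equivalents into the integrand of $B$ reduces the analysis to deciding, in each of the same twelve subcases of $(\alpha,\theta,\nu)$ tabulated in Proposition~\ref{propOK2}, whether a product of powers of $x$ and of $\ln(R/x)$ is integrable with respect to $\mathrm{d}x$ on $(0,R)$ both at $x\to 0$ and at $x\to R$. I will carry out these twelve checks and then collapse the resulting conditions into items (i)--(v) of the statement.

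The structural contrast with Proposition~\ref{propOK2} is that $B$ is an $L^{r}$-norm rather than an $L^{\infty}$-norm: whenever the product integrand in $B$ decays exactly like the boundary power that rendered the supremum in Proposition~\ref{propOK2} bounded, the additional $r$-th power integration fails, and the corresponding non-strict inequality has to be tightened. This is why the endpoint $p=-q(\theta+1)/(q-1-\mu)$ admissible in items (i)--(iii) of Proposition~\ref{propOK2} becomes the strict $p>-q(\theta+1)/(q-1-\mu)$ in items (i)--(iii) here. The main obstacle I anticipate is the borderline regime $\alpha>-1$, $\nu>q-1$, $p=(\alpha+1)q/(\nu-q+1)$ corresponding to item (iv): here both inner integrals carry logarithmic corrections on top of their leading power behaviour, the powers of $x$ cancel in the integrand of $B$, and integrability near $x\to 0$ reduces to a purely logarithmic $\mathrm{d}x/x$-type integral whose convergence pins down the fine balance $p(\mu+1)>q(\theta+1)$. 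Item (v), where $\theta\geq -1$ and $\nu\leq q-1$, is the easiest: at least one of the inner integrals is then bounded at the critical endpoint, and $B$ reduces to a one-factor convergence analysis that matches directly the conditions already derived in Proposition~\ref{propOK2}.
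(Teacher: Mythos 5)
Your proposal correctly identifies the applicable Opic–Kufner criterion (Theorem 6.3), the already-computed asymptotics from \eqref{eqr2}--\eqref{eqr3} and \eqref{eqr6}--\eqref{eqr7}, the twelve-case decomposition inherited from Proposition~\ref{propOK2}, and the mechanism (the extra $\mathrm{d}x$-integration) that turns the non-strict endpoints into strict ones; this matches the paper's proof essentially exactly, and your computation in the borderline case (iv) — cancellation of $x$-powers leaving a $\mathrm{d}x/x$ integral with logarithmic exponent, pinned down by $p(\mu+1)>q(\theta+1)$ — is precisely what the paper's table records. The only imprecision is the claim that $\mu<q-1$ is forced ``exactly as in Proposition~\ref{propOK2}'' by pointwise divergence of $\int_x^R g^{-1/(q-1)}$: that argument works cleanly for $p>1$ because the middle factor in \eqref{eq311} then carries a positive power, but for $p=1$ that power vanishes and the necessity of $\mu<q-1$ instead comes from the divergence of the remaining weight $(\ln\frac{R}{x})^{-\mu/(q-1)}$ near $x=R$ against the (at most doubly-logarithmically growing) factor $\bigl(\int_0^x f\bigr)^{q/(q-1)}$; this is a small patch rather than a genuine gap.
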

\begin{proof}
By \cite[Theorem 6.3]{zbMATH00046945}, \eqref{eqOK} holds if and only if the following integral is finite
\begin{equation}\label{eq311}
\int_0^R\left(\int_0^x\left(\ln\frac{R}{t}\right)^\theta t^\alpha \,\mathrm{d}t\right)^{\frac{q}{q-p}}\left(\int_x^R\left(\ln\frac{R}{t}\right)^{-\frac{\mu}{q-1}}t^{-\frac{\nu}{q-1}}\,\mathrm{d}t\right)^{q\frac{p-1}{q-p}}\left(\ln\frac{R}{x}\right)^{-\frac{\mu}{q-1}}x^{-\frac{\nu}{q-1}}\, \mathrm{d} x,
\end{equation}
Making the change of variables $y=\ln\frac{R}{x}$, we note that, given $a,b\in\mathbb R$,
\begin{equation*}
\int_0^{\frac{R}2}\left(\ln\frac{R}{x}\right)^ax^b\, \mathrm{d} x=R^{b+1}\int_{\ln2}^\infty y^a\mathrm{e}^{-(b+1)y}\,\mathrm{d}y\left\{\begin{array}{ll}
    =\infty&\mbox{if }b<-1\\
     <\infty&\mbox{if }b>-1\\
     \left\{\begin{array}{ll}
        <\infty&\mbox{if }a<-1  \\
          =\infty&\mbox{if }a\geq-1 
     \end{array}\right.&\mbox{if }b=-1.
\end{array}\right.
\end{equation*}
Then
\begin{equation}\label{eq692}
\int_0^{\frac{R}2}\left(\ln\frac{R}{x}\right)^ax^b\, \mathrm{d} x<\infty\Leftrightarrow b>-1\mbox{ or }\left(b=-1\mbox{ and }a<-1\right).
\end{equation}
Analogously, we note that
\begin{equation*}
    \int_{\frac{R}2}^R\left(\ln\frac{R}{x}\right)^ax^b\, \mathrm{d} x=R^{b+1}\int_0^{\ln2}y^a\mathrm{e}^{-(b+1)y}\,\mathrm{d}y\left\{\begin{array}{ll}
    =\infty&\mbox{if }a\leq-1\\
     <\infty&\mbox{if }a>-1.
\end{array}\right.
\end{equation*}
Thus,
\begin{equation*}
\int_{\frac{R}2}^R\left(\ln\frac{R}{x}\right)^ax^b\, \mathrm{d} x<\infty\Leftrightarrow a>-1.
\end{equation*}

We need to analyze the following twelve cases (assuming $\mu<q-1$ in all scenarios):
\begin{center}
\begin{tabular}{ |c|c|c|c| } 
 \hline
  & $\nu<q-1$ & $\nu=q-1$ & $\nu>q-1$\\
  \hline
 $\alpha=-1,\theta<-1$ & Case 1 & Case 2 & Case 3\\
 \hline
 $\alpha>-1,\theta<-1$ & Case 4 & Case 5 & Case 6\\
 \hline
 $\alpha>-1,\theta=-1$ & Case 7 & Case 8 & Case 9\\
 \hline
 $\alpha>-1,\theta>-1$ & Case 10 & Case 11 & Case 12\\ 
 \hline
\end{tabular}
\end{center}
In each of these cases, we proceed in the following way. Dividing the integral in half we note that
\begin{equation*}
\mbox{\eqref{eq311} holds}\Leftrightarrow\int_0^{\frac{R}{2}}A(x)\, \mathrm{d} x<\infty\mbox{ and }\int_{\frac{R}{2}}^RA(x)\, \mathrm{d} x<\infty,
\end{equation*}
where $A(x)=A_{p,q,\alpha,\theta,\nu,\mu,x,R}(x)>0$ is the integrand of \eqref{eq311}. In order to determinate when $\int_0^{\frac{R}{2}}A(x)\, \mathrm{d} x<\infty$ we use \eqref{eqr3} and \eqref{eqr7} and to determinate when $\int_{\frac{R}{2}}^RA(x)\, \mathrm{d} x<\infty$ we use \eqref{eqr2} and \eqref{eqr6}. Summarizing the conclusions for all cases, assuming $\mu<q-1$ in each instance, we obtain:

\begin{center}
\begin{tabular}{|c|c|}
\hline
     Case 1& \eqref{eq311} is finite $\Leftrightarrow$ $\alpha=-1$, $\theta<-1$, $\nu<q-1$, and $p>-\frac{q(\theta+1)}{q-1-\mu}$\\
     \hline
     Case 2& \eqref{eq311} is infinite \\
     \hline
     Case 3& \eqref{eq311} is infinite \\
     \hline
     Case 4& \eqref{eq311} is finite $\Leftrightarrow$ $\alpha>-1$, $\theta<-1$, $\nu<q-1$, and $p>-\frac{q(\theta+1)}{q-1-\mu}$\\
     \hline
     Case 5& \eqref{eq311} is finite $\Leftrightarrow$ $\alpha>-1$, $\theta<-1$, $\nu=q-1$, and $p>-\frac{q(\theta+1)}{q-1-\mu}$\\
     \hline
     Case 6& \eqref{eq311} is finite $\Leftrightarrow$ $\alpha>-1$, $\theta<-1$, $\nu>q-1$, $p>-\frac{q(\theta+1)}{q-1-\mu}$, and\\ &($p<\frac{(\alpha+1)q}{\nu-q+1}$ or ($p=\frac{(\alpha+1)q}{\nu-q+1}$ and $p(\mu+1)>q(\theta+1)$))\\
     \hline
     Case 7& \eqref{eq311} is finite $\Leftrightarrow$ $\alpha>-1$, $\theta=-1$, and $\nu<q-1$\\
     \hline
     Case 8& \eqref{eq311} is finite $\Leftrightarrow$ $\alpha>-1$, $\theta=-1$, and $\nu=q-1$\\
     \hline
     Case 9& \eqref{eq311} is finite $\Leftrightarrow$ $\alpha>-1$, $\theta=-1$, $\nu>q-1$, and\\&$(p<\frac{(\alpha+1)q}{\nu-q+1}$ or ($p=\frac{(\alpha+1)q}{\nu-q+1}$ and $\mu>-1$))\\
     \hline
     Case 10& \eqref{eq311} is finite $\Leftrightarrow$ $\alpha>-1$, $\theta>-1$, and $\nu<q-1$\\
     \hline
     Case 11& \eqref{eq311} is finite $\Leftrightarrow$ $\alpha>-1$, $\theta>-1$, and $\nu=q-1$\\
     \hline
     Case 12& \eqref{eq311} is finite $\Leftrightarrow$ $\alpha>-1$, $\theta>-1$, $\nu>q-1$, and\\&($p<\frac{(\alpha+1)q}{\nu-q+1}$ or ($p=\frac{(\alpha+1)q}{\nu-q+1}$ and $p(\mu+1)>q(\theta+1)$))\\
     \hline
\end{tabular}
\end{center}

Here, we provide a detailed explanation of Case 7 as an example. Assume that $\alpha>-1$, $\theta=-1$, and $\nu<q-1$. Note that the inequality $\frac{(\alpha+1)q}{q-p}-\frac{\nu}{q-1}>-1$, together with \eqref{eq692}, implies that
\begin{equation*}
\int_0^{\frac{R}2}\left(\ln\frac{R}{x}\right)^{\frac{\theta q}{q-p}}x^{\frac{(\alpha+1)q}{q-p}}\left(\ln\frac{R}{x}\right)^{-\frac{\mu}{q-1}}x^{-\frac{\nu}{q-1}}\mathrm \, \mathrm{d} x<\infty.
\end{equation*}
By applying \eqref{eqr3} and \eqref{eqr7}, we get that $\int_0^{\frac{R}{2}}A(x)\, \mathrm{d} x<\infty$ without any additional assumptions on the constants. We are left to check when $\int_{\frac{R}2}^RA(x)\, \mathrm{d} x<\infty$. We proceed by fixing $\epsilon=\exp(\frac{1}{\mathrm{e}(\alpha+1)})-1>0$ and then making the change of variables $y=\ln\frac{R}{x}$ and $z=-\ln\left[(\alpha+1)y\right]$ to obtain
\begin{align*}
\int_{\frac{R}2}^RA(x)\, \mathrm{d} x<\infty&\Leftrightarrow\int_{\frac{R}{1+\epsilon}}^RA(x)\, \mathrm{d} x<\infty\\
&\Leftrightarrow\int_{\frac{R}{1+\epsilon}}^R\left[-\ln\left((\alpha+1)\ln\frac{R}{x}\right)\right]^{\frac{q}{q-p}}\!\left(\ln\frac{R}{x}\right)^{\frac{q-1-\mu}{q-1}q\frac{p-1}{q-p}-\frac{\mu}{q-1}}x^{-\frac{\nu}{q-1}}\, \mathrm{d} x<\infty\\
&\Leftrightarrow\int_{0}^{\frac{1}{\mathrm{e}(\alpha+1)}}\left[-\ln\left(\alpha+1)y\right)\right]^{\frac{q}{q-p}}y^{\frac{q-1-\mu}{q-1}q\frac{p-1}{q-p}-\frac{\mu}{q-1}}\mathrm{e}^{(\frac{\nu}{q-1}-1)y}\,\mathrm{d}y<\infty\\
&\Leftrightarrow\int_{0}^{\frac{1}{\mathrm{e}(\alpha+1)}}\left[-\ln\left(\alpha+1)y\right)\right]^{\frac{q}{q-p}}y^{\frac{q-1-\mu}{q-1}q\frac{p-1}{q-p}-\frac{\mu}{q-1}}\,\mathrm{d}y<\infty\\
&\Leftrightarrow\int_1^\infty z^{\frac{q}{q-p}}\mathrm{e}^{(\frac{\mu}{q-1}-\frac{q-1-\mu}{q-1}q\frac{p-1}{q-p}-1)z}\mathrm dz<\infty.
\end{align*}
The last inequality always happens because
\begin{equation*}
\frac{\mu}{q-1}-\frac{q-1-\mu}{q-1}q\frac{p-1}{q-p}-1<0.
\end{equation*}
This concludes that in Case 7 we have that \eqref{eq311} is finite if, and only if, $\alpha>-1$, $\theta=-1$, $\nu<q-1$, and $\mu<q-1$.

Note the implications of each case:
\begin{itemize}
    \item Cases 1 and 4 infer item (i);
    \item Case 5 infers item (ii);
     \item Cases 6, 9, and 12 infer items (iii) and (iv);
    \item Cases 7, 8, 10, and 11 infer item (v).
\end{itemize}
This concludes the proof.
\end{proof}

Since Propositions \ref{hardy1}, \ref{propOK2}, and \ref{propOK3} have been demonstrated, we are now prepared to prove Theorem \ref{theo21}.

\begin{proof}[Proof of Theorem \ref{theo21}]
Here, we apply the previously established propositions to the following constants:
\begin{equation*}
q=k+1,\ \theta=0,\ \mu=\frac{\beta n}2,\mbox{ and }\nu=n-k.
\end{equation*}
The proof is divided into four cases:

\smallskip

\noindent\underline{Case 1:} When $p\geq k+1$ and $k\geq\frac{n}{2}$, we apply items (vi) and (vii) of Proposition \ref{hardy1} alongside item (vi) of Proposition \ref{propOK2}. This establishes that inequality \eqref{aosfnas} holds if and only if one of the conditions in items (i), (ii), or (iv) of Theorem \ref{theo21} is satisfied.

\smallskip

\noindent\underline{Case 2:} When $p \geq k+1$ and $k < \frac{n}{2}$, we use item (vi) of Proposition \ref{hardy1} together with items (vii) and (viii) of Proposition \ref{propOK2}. This shows that inequality \eqref{aosfnas} holds if and only if one of the conditions in items (iii), (v), or (vi) of Theorem \ref{theo21} occurs.

\smallskip

\noindent\underline{Case 3:} When $p < k+1$ and $k \geq \frac{n}{2}$, we rely on item (v) of Proposition \ref{propOK3}, demonstrating that inequality \eqref{aosfnas} holds if and only if item (iv) of Theorem \ref{theo21} is satisfied.

\smallskip

\noindent\underline{Case 4:} When $p < k+1$ and $k < \frac{n}{2}$, we apply items (iii) and (iv) of Proposition \ref{propOK3}, establishing that inequality \eqref{aosfnas} holds if and only if one of the conditions in items (v) or (vii) of Theorem \ref{theo21} occurs.
\end{proof}

\subsection{Case logarithmic with \texorpdfstring{$\mathrm{e}$}{}}

Let us assume $v\in AC_R(0,R)$, where $R\in(0,\infty)$. The main goal of this subsection is to check when
\begin{equation}\label{eqOKe}
    \left(\int_0^R|v|^p\left(\ln\dfrac{\mathrm{e}R}{t}\right)^\theta t^\alpha \,\mathrm{d}t\right)^{\frac1p}\leq C\left(\int_0^R|v^{\prime}|^q\left(\ln\frac{\mathrm{e}R}{t}\right)^\mu t^\nu \,\mathrm{d}t\right)^{\frac1q}
\end{equation}
holds for some $C=C(\alpha,\theta,\nu,\mu,R,p,q)>0$, where $\alpha,\theta,\nu,\mu\in\mathbb R$ and $p,q\in[1,\infty)$.

\begin{prop}\label{hardy1e}
Assume $v\in AC_{R}(0,R)$, $q=1$, and $1\leq p<\infty$. Given $\alpha,\theta,\nu,\mu\in\mathbb R$, the inequality \eqref{eqOKe} holds if and only if one of the following conditions is fulfilled:
\begin{flushleft}
    $\mathrm{(i)}$ $\alpha=-1$, $\theta<-1$, $\frac{\theta+1}p\leq\mu<0$, and $\nu=0$;\\
    $\mathrm{(ii)}$ $\alpha=-1$, $\theta<-1$, $\mu<0$, and $\nu<0$;\\
    $\mathrm{(iii)}$ $\alpha=-1$, $\theta<-1$, $\mu\geq0$, and $\nu\leq0$;\\
    $\mathrm{(iv)}$ $\alpha=-1$, $\theta<-1$, and $\mu>0$, and $\nu>0$;\\
    $\mathrm{(v)}$ $\alpha>-1$ and $\nu\leq0$;\\
    $\mathrm{(vi)}$ $\alpha>-1$, $\mu\leq0$, and $0<\nu<\frac{\alpha+1}p$;\\
    $\mathrm{(vii)}$ $\alpha>-1$, $\theta\leq\mu p$, $\mu\leq0$, and $\nu=\frac{\alpha+1}p$;\\
    $\mathrm{(viii)}$ $\alpha>-1$, $\mu>0$, and $\nu>0$.
\end{flushleft}
\end{prop}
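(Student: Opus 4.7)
The strategy parallels exactly the proof of Proposition \ref{hardy1}, with the single structural difference that $\ln(\mathrm{e}R/t) = 1 + \ln(R/t) \in [1,\infty)$ on $(0,R]$. In particular, the logarithmic weight is bounded below by $1$ and, crucially, does not vanish at the right endpoint $t = R$. This removes all pathologies near $R$ and therefore enlarges the admissible parameter region, which is why items such as (iv) and (viii) (with $\mu > 0$, $\nu > 0$) appear here but not in Proposition \ref{hardy1}.

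First I would invoke \cite[Theorem 6.2]{zbMATH00046945} in the same way as before to reduce \eqref{eqOKe} for $q=1$ to the boundedness condition
\begin{equation*}
\sup_{x\in(0,R)} \Bigl(\int_0^x \bigl(\ln\tfrac{\mathrm{e}R}{t}\bigr)^\theta t^\alpha \,\mathrm{d}t\Bigr)^{1/p} \bigl\| (\ln\tfrac{\mathrm{e}R}{\cdot})^{-\mu} (\cdot)^{-\nu} \bigr\|_{L^\infty(x,R)} < \infty,
\end{equation*}
setting $f(t)=(\ln\tfrac{\mathrm{e}R}{t})^\theta t^\alpha$ and $g(t)=(\ln\tfrac{\mathrm{e}R}{t})^\mu t^\nu$. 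Next I would derive the asymptotics of $F(x):=\int_0^x f(t)\,\mathrm{d}t$. Near $t=0$, $\ln(\mathrm{e}R/t) \sim \ln(R/t)$, so the analysis mimics \eqref{eqr1} and \eqref{eqr3} essentially verbatim (via the change of variables $r=\ln(\mathrm{e}R/t)$ and the incomplete gamma function asymptotics): $F(x)$ converges at $0$ iff either $\alpha>-1$, or $\alpha=-1$ and $\theta<-1$. Near $t=R$, however, the weight is bounded in $[1, 1+\ln 2]$, so $F$ is automatically continuous and bounded at $R$. This completely kills the right-endpoint obstructions.

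For the $L^\infty$ norm, direct differentiation of $g^{-1}(t)=(\ln\tfrac{\mathrm{e}R}{t})^{-\mu}t^{-\nu}$ gives the critical point $t_0=\mathrm{e}R\exp(-\mu/\nu)$, which lies in $(0,R)$ iff $\mu/\nu>1$. Combining this with the signs of $\mu$ and $\nu$, one obtains
\begin{equation*}
\|g^{-1}\|_{L^\infty(x,R)} = \begin{cases} g^{-1}(R), & \mu\leq 0,\ \nu\geq 0, \\ g^{-1}(x), & \mu\leq 0,\ \nu< 0,\\ g^{-1}(R), & \mu> 0,\ \nu>0, \\ g^{-1}(x), & \mu> 0,\ \nu<0 \text{ and } x>t_0,\\ g^{-1}(t_0), & \mu>0,\ \nu<0 \text{ and } x\leq t_0, \end{cases}
\end{equation*}
with the case $\mu>0$, $\nu=0$ giving the blow-up $\|g^{-1}\|_{L^\infty(x,R)}=\infty$ (unlike in Proposition \ref{hardy1}, the exponent $\mu>0$ is no longer automatically forbidden, because $\ln(\mathrm{e}R/t)$ is bounded; the obstruction now appears only at $t\to 0$).

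Finally I would enumerate cases by the sign patterns of $\alpha+1$, $\theta+1$, $\mu$, $\nu$ and check finiteness of the supremum as $x \to 0$ and $x \to R$. Because the $x\to R$ limit is harmless, the only nontrivial checks are at $x \to 0$: one uses the asymptotic formulas for $F$ together with the expressions for $\|g^{-1}\|_{L^\infty(x,R)}$ listed above, and reads off exactly the scaling conditions that appear in items (i)--(viii). The main obstacle is the borderline configuration $\nu=(\alpha+1)/p$ (covered by items (iii), (vi), (vii), (viii)), where the blow-up of $F(x)^{1/p}$ as $x\to 0$ must be exactly canceled by the decay of $g^{-1}(x)$, forcing the sharp logarithmic condition $\theta \leq \mu p$ in item (vii). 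The other subtle point is checking that the cases with $\alpha=-1$, $\theta<-1$ subdivide according to whether $\mu$ and $\nu$ are positive, negative, or zero, producing items (i)--(iv). Collecting the surviving configurations yields precisely the eight items of the statement.
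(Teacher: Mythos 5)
Your high-level strategy (reduce to the Opic--Kufner supremum criterion, compute asymptotics of $F(x)=\int_0^x f\,\mathrm{d}t$ at $x\to0$, and observe that the $x\to R$ regime is harmless because $\ln(\mathrm{e}R/t)\geq1$ near $R$) is exactly the route taken in the paper. But the table you give for $\|g^{-1}\|_{L^\infty(x,R)}$ is wrong in essentially every entry, and since the conclusion of each case is read directly off this table, the subsequent case analysis cannot be completed. Since $\ln(\mathrm{e}R/t)$ is decreasing in $t$, the factor $(\ln\tfrac{\mathrm{e}R}{t})^{-\mu}$ is nonincreasing for $\mu\leq0$ and increasing for $\mu>0$; combining this with the monotonicity of $t^{-\nu}$ gives the opposite of what you wrote. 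For $\mu\leq0,\nu\geq0$ both factors are nonincreasing, so the supremum is $g^{-1}(x)$, not $g^{-1}(R)$. For $\mu>0,\nu<0$ both factors are increasing, so the supremum is $g^{-1}(R)$, not $g^{-1}(x)$; moreover your critical point $t_0=\mathrm{e}R\,\mathrm{e}^{-\mu/\nu}$ has $\mu/\nu<0$ in this case, hence $t_0>\mathrm{e}R>R$ lies outside $(0,R)$, so writing $g^{-1}(t_0)$ there is meaningless. For $\mu,\nu>0$ the factor $t^{-\nu}$ dominates as $t\to0^+$ and $g^{-1}(t)\to\infty$, so for small $x$ the supremum is $g^{-1}(x)$ and not the bounded quantity $g^{-1}(R)$ that you report.

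Finally, the claim that $\mu>0$, $\nu=0$ forces $\|g^{-1}\|_{L^\infty(x,R)}=\infty$ is simply false: with $\nu=0$ one has $g^{-1}(t)=(\ln\tfrac{\mathrm{e}R}{t})^{-\mu}\leq 1$ for all $t\in(0,R]$ because the logarithm is at least $1$ --- this is precisely the feature of the shifted weight that you flagged in your own preamble and then failed to use. That misattributed blow-up would wrongly exclude, among others, the configuration $\mu>0,\nu=0$ which the statement explicitly covers in items (iii) and (v). You therefore need to redo the monotonicity analysis of $g^{-1}$ from scratch, distinguishing the sign of $\mu/\nu$ and whether $t_0$ lies inside $(0,R)$, before the $x\to0$ limits can be read off and matched against items (i)--(viii).
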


\begin{proof}
The proof follows the approach of Proposition \ref{hardy1}. We will only present the results derived by applying the same arguments. According to \cite[Theorem 6.2]{zbMATH00046945}, \eqref{eqOKe} holds if and only if
\begin{equation}\label{eq0011e}
\sup_{x\in(0,R)}\|f^{\frac1p}\|_{L^p(0,x)}\|g^{-1}\|_{L^{\infty}(x,R)}<\infty,
\end{equation}
where $f(t)=(\ln\frac{\mathrm{e}R}{t})^\theta t^\alpha$ and $g(t)=(\ln\frac{\mathrm{e}R}{t})^\mu t^\nu$. The following asymptotic behavior is obtained as $x\to R$.
\begin{equation}\label{eqr2e}
\int_0^x\left(\ln\frac{\mathrm{e}R}{t}\right)^\theta t^\alpha \,\mathrm{d}t=\left\{\begin{array}{lll}
     O(1)&\mbox{if }\alpha=-1\mbox{ and }\theta<-1 \\
     O(1)&\mbox{if }\alpha>-1\\
     \infty&\mbox{otherwise}.
\end{array}\right.
\end{equation}
Moreover, as $x\to0$, we have
\begin{equation}\label{eqr3e}
\int_0^x\left(\ln\frac{\mathrm{e}R}{t}\right)^\theta t^\alpha \,\mathrm{d}t=\left\{\begin{array}{lll}
     O\left[\left(\ln\frac{\mathrm{e}R}{x}\right)^{\theta+1}\right]&\mbox{if }\alpha=-1\mbox{ and }\theta<-1  \\
     O\left[\left(\ln\frac{\mathrm{e}R}{x}\right)^\theta x^{\alpha+1}\right]&\mbox{if }\alpha>-1 \\
\infty&\mbox{otherwise}.
\end{array}\right.
\end{equation}

To analyze the norm $\|g^{-1}\|_{L^\infty(x,R)}$, we obtain the following condition
\begin{equation}\label{eqr4e}
\|g^{-1}\|_{L^{\infty}(x,R)}=\left\{\begin{array}{llll}
     g^{-1}(x)&\mbox{if }\mu\leq0\mbox{ and }\nu\geq0  \\
     g^{-1}(R)&\mbox{if }\mu\geq0\mbox{ and }\nu\leq0\\
     g^{-1}(x)&\mbox{if }\mu,\nu<0\mbox{ and }x>R\mathrm{e}^{1-\frac{\mu}{\nu}}\\
     g^{-1}(R\mathrm{e}^{1-\frac{\mu}{\nu}})&\mbox{if }\mu,\nu<0\mbox{ and }x\leq R\mathrm{e}^{1-\frac{\mu}{\nu}}\\
     g^{-1}(x)&\mbox{if }\mu,\nu>0\mbox{ and }x>R\mathrm{e}^{1-\frac{\mu}{\nu}}\\
     g^{-1}(\min\{R,R\mathrm{e}^{1-\frac{\mu}{\nu}}\})&\mbox{if }\mu,\nu>0\mbox{ and }x\leq R\mathrm{e}^{1-\frac{\mu}{\nu}}.
\end{array}\right.
\end{equation}

To examine \eqref{eq0011e} using \eqref{eqr2e}, \eqref{eqr3e}, and \eqref{eqr4e}, we must consider the following six cases:
\begin{center}
\begin{tabular}{ |c|c|c|c| } 
 \hline
  & $\mu\leq0,\nu\geq0$ & $\mu\geq0,\nu\leq0$ & $\mu<0,\nu<0$ or $\mu>0,\nu>0$\\
  \hline
 $\alpha=-1,\theta<-1$ & Case 1 & Case 2 & Case 3 \\ 
 \hline
 $\alpha>-1$ & Case 4 & Case 5 & Case 6 \\ 
 \hline
\end{tabular}
\end{center}
We must verify the finiteness of \eqref{eq0011e} in each of these cases. From \eqref{eqr2e} and \eqref{eqr4e}, we observe that in each of the six cases:
\begin{equation*}
\eqref{eq0011e}\mbox{ holds}\Leftrightarrow\lim_{x\to 0}\left(\displaystyle\int_0^x\left(\ln\frac{\mathrm{e}R}{x}\right)^\theta t^\alpha \,\mathrm{d}t\right)^{\frac{1}{p}}\|g^{-1}\|_{L^\infty(x,R)}<\infty.
\end{equation*}
Summarizing the conclusions for all cases:
\begin{center}
\begin{tabular}{|c|c|}
\hline
     Case 1& \eqref{eq0011e} holds $\Leftrightarrow\alpha=-1$, $\theta<-1$, $\mu\leq 0$, $\nu=0$, and $p\mu\leq\theta+1$\\
     \hline
     Case 2& \eqref{eq0011e} holds $\Leftrightarrow\alpha=-1$, $\theta<-1$, $\mu\geq0$, $\nu\leq0$\\
     \hline
     Case 3& \eqref{eq0011e} holds $\Leftrightarrow\alpha=-1$, $\theta<-1$, and ($\mu,\nu<0$ or $\mu,\nu>0$) \\
     \hline
     Case 4& \eqref{eq0011e} holds $\Leftrightarrow\alpha>-1$, $\mu\leq0$, $\nu\geq0$,\\
     &and ($p\nu<\alpha+1$ or ($p\nu=\alpha+1$ and $\theta\leq\mu p$))\\
     \hline
     Case 5& \eqref{eq0011e} holds $\Leftrightarrow\alpha>-1$, $\mu\geq0$, $\nu\leq0$\\
     \hline
     Case 6& \eqref{eq0011e} holds $\Leftrightarrow\alpha>-1$ and ($\mu,\nu<0$ or $\mu,\nu>0$)\\
     \hline
\end{tabular}
\end{center}
The implications of each case are as follows:
\begin{itemize}
    \item Case 1 infers items (i) and (iii) (for $\mu=\nu=0)$;
    \item Case 2 infers item (iii);
    \item Case 3 infers items (ii) and (iv);
    \item Case 4 infers items (v) (for $\mu\leq 0$ and $\nu=0$), (vi), and (vii);
    \item Case 5 infers item (v) (for $\mu\geq0$);
    \item Case 6 infers items (v) (for $\mu<0$ and $\nu<0$) and (viii).
\end{itemize}
This concludes the proof.
\end{proof}

\begin{prop}\label{propOK2e}
Assume $v\in AC_{R}(0,R)$ and $1<q\leq p<\infty$. Given $\alpha,\theta,\nu,\mu\in\mathbb R$, the inequality \eqref{eqOKe} holds if and only if one of the following conditions is fulfilled:
\begin{flushleft}
    $\mathrm{(i)}$ $\alpha=-1$, $\theta<-1$, and $\nu<q-1$;\\
    $\mathrm{(ii)}$ $\alpha=-1$, $\theta<-1$, $\nu=q-1$, $\mu<q-1$, and $p\geq-\frac{q(\theta+1)}{q-1-\mu}$;\\
    $\mathrm{(iii)}$ $\alpha=-1$, $\theta<-1$, $\nu=q-1$, and $\mu\geq q-1$;\\
    $\mathrm{(iv)}$ $\alpha>-1$ and $\nu\leq q-1$;\\
    $\mathrm{(v)}$ $\alpha>(\nu-q+1)\frac{p}{q}-1$ and $\nu>q-1$;\\
    $\mathrm{(vi)}$ $\alpha=(\nu-q+1)\frac{p}{q}-1$, $\nu>q-1$, and $\frac{\theta}{p}\leq\frac{\mu}{q}$.
\end{flushleft}
\end{prop}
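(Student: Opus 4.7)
The plan is to mirror the proof of Proposition~\ref{propOK2} with the shifted logarithm $\ln\frac{\mathrm{e}R}{t}$ in place of $\ln\frac{R}{t}$. By \cite[Theorem 6.2]{zbMATH00046945}, the inequality \eqref{eqOKe} is equivalent to
\begin{equation*}
\sup_{x\in(0,R)}\|f^{1/p}\|_{L^p(0,x)}\,\|g^{-1/q}\|_{L^{q/(q-1)}(x,R)}<\infty,
\end{equation*}
with $f(t)=(\ln\frac{\mathrm{e}R}{t})^\theta t^\alpha$ and $g(t)=(\ln\frac{\mathrm{e}R}{t})^\mu t^\nu$. The two-sided asymptotic behaviour of $\|f^{1/p}\|_{L^p(0,x)}$ has already been recorded in \eqref{eqr2e} and \eqref{eqr3e} during the proof of Proposition~\ref{hardy1e}, so the only new task is to determine the corresponding asymptotics for
\begin{equation*}
I(x):=\int_x^R\Big(\ln\tfrac{\mathrm{e}R}{t}\Big)^{-\mu/(q-1)} t^{-\nu/(q-1)}\,\mathrm{d}t.
\end{equation*}

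The essential structural difference compared to Proposition~\ref{propOK2} is that $\ln\frac{\mathrm{e}R}{t}\geq 1$ on $(0,R]$, with the value $1$ attained at $t=R$. Consequently the integrand of $I(x)$ never develops a singularity from the logarithmic factor at the right endpoint, $I(x)$ is finite for every $x\in(0,R)$ regardless of the signs of $\mu$ and $\nu$, and $I(x)\to 0$ as $x\to R^{-}$. This removes the $\mu$-split at $x=R$ that organized the analysis in Proposition~\ref{propOK2}, so the only delicate limit is $x\to 0^{+}$. After the substitution $y=\ln\frac{\mathrm{e}R}{t}$ combined with the scaling already employed in the proof of Proposition~\ref{propOK2}, $I(x)$ is expressed in terms of lower and upper incomplete gamma integrals whose growth depends on the signs of $\nu-(q-1)$ and of $\mu-(q-1)$, yielding the expected $O(1)$, logarithmic, or power-of-$x$ growth rates.

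With these asymptotic profiles in hand, the remainder is the bookkeeping exercise of tabulating the product $\|f^{1/p}\|_{L^p(0,x)}\cdot I(x)^{(q-1)/q}$ against the two $(\alpha,\theta)$ regimes that permit finiteness of $\int_0^R f$, namely $\{\alpha=-1,\,\theta<-1\}$ and $\{\alpha>-1\}$, crossed with the trichotomy $\nu<q-1$, $\nu=q-1$, $\nu>q-1$. Matching the resulting six cells with the statement recovers items (i) through (vi). I expect the main obstacle to be bookkeeping rather than analytical: the critical exponent $p=-q(\theta+1)/(q-1-\mu)$ combined with the regime $\mu\geq q-1$, which was excluded in the $w_0$-analysis of Proposition~\ref{propOK2} but is now admissible precisely because $I(x)$ stays finite near $R$, produces items (ii) and (iii); the critical weight exponent $\alpha=(\nu-q+1)p/q-1$ together with the tie-breaker $\theta/p\leq\mu/q$ yields items (v) and (vi). No analytical ingredient beyond those already developed for Proposition~\ref{propOK2} should be required.
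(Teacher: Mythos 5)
Your proposal follows exactly the route the paper takes: reduce via \cite[Theorem 6.2]{zbMATH00046945}, reuse the asymptotics \eqref{eqr2e}--\eqref{eqr3e} for $\|f^{1/p}\|_{L^p(0,x)}$, observe that $\ln\frac{\mathrm{e}R}{t}\geq1$ on $(0,R]$ so the second factor stays bounded near $R$ and only the limit $x\to0^+$ needs analysis, then tabulate by the sign of $\alpha+1$ and the position of $\nu$ (and, on the line $\nu=q-1$, of $\mu$) relative to $q-1$. The only imprecision is the "six cells" count — the paper actually works through ten cases because the row $\nu=q-1$ splits three ways in $\mu$ — but you have already anticipated this refinement when you trace items (ii), (iii) to the admissibility of $\mu\geq q-1$, so the gap is cosmetic rather than substantive.
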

\begin{proof}
The proof follows the approach of Proposition \ref{propOK2}. We will only present the results derived by applying the same arguments. According to \cite[Theorem 6.2]{zbMATH00046945}, \eqref{eqOKe} holds if and only if
\begin{equation}\label{eq0012e}
\sup_{x\in(0,R)}\|f^{\frac1p}\|_{L^p(0,x)}\|g^{-\frac{1}{q}}\|_{L^{\frac{q}{q-1}}(x,R)}<\infty,
\end{equation}
where $f(t)=(\ln\frac{\mathrm{e}R}{t})^\theta t^\alpha$ and $g(t)=(\ln\frac{\mathrm{e}R}{t})^\mu t^\nu$. We already have the asymptotic behavior of $\|f^{\frac{1}{p}}\|_{L^p(0,x)}$ in \eqref{eqr2e} and \eqref{eqr3e}. Thus, now we will examine the asymptotic behavior of $\|g^{-\frac{1}q}\|_{L^{\frac{q}{q-1}}(x,R)}^{\frac{q}{q-1}}$. The following asymptotic behavior is obtained as $x\to R$.
\begin{equation}\label{eqr6e}
\int_x^R\left(\ln\frac{\mathrm{e}R}{t}\right)^{-\frac{\mu}{q-1}}t^{-\frac{\nu}{q-1}}\,\mathrm{d}t=O\left(\ln\frac{R}{x}\right).
\end{equation}
Moreover, as $x\to0$, we have
\begin{equation}\label{eqr7e}
\int_x^R\left(\ln\frac{\mathrm{e}R}{t}\right)^{-\frac{\mu}{q-1}}t^{-\frac{\nu}{q-1}}\,\mathrm{d}t=\left\{\begin{array}{ll}
     O(1)&\mbox{if }\nu<q-1  \\
     O\left[\left(\ln\frac{\mathrm{e}R}{x}\right)^{-\frac{\mu}{q-1}}x^{\frac{q-1-\nu}{q-1}}\right]&\mbox{if }\nu>q-1\\
     O\left[\left(\ln\frac{\mathrm{e}R}{x}\right)^{\frac{q-1-\mu}{q-1}}\right]&\mbox{if }\nu=q-1,\mu<q-1\\
     O\left[\ln\left(\ln\frac{\mathrm{e}R}{x}\right)\right]&\mbox{if }\nu=q-1,\mu=q-1\\
     O(1)&\mbox{if }\nu=q-1,\mu>q-1.
\end{array}\right.
\end{equation}

We need to analyze the following ten cases:
\begin{center}
\begin{tabular}{ |c|c|c|c|c|c| } 
 \hline
  & $\nu<q-1$ & $\nu>q-1$ & $\nu=q-1>\mu$ & $\nu=\mu=q-1$ & $\nu=q-1<\mu$\\
  \hline
 $\alpha=-1,\theta<-1$ & Case 1 & Case 2 & Case 3 & Case 4 & Case 5\\ 
 \hline
 $\alpha>-1$ & Case 6 & Case 7 & Case 8 & Case 9 & Case 10\\ 
 \hline
\end{tabular}
\end{center}
We need to check the finiteness of \eqref{eq0012e} in each one of these cases. To evaluate the supremum, we note that
\begin{equation*}
\eqref{eq0012e}\mbox{ holds}\Leftrightarrow\left\{\begin{array}{l}
     \displaystyle\lim_{x\to R}\left(\displaystyle\int_0^x\left(\ln\frac{\mathrm{e}R}{t}\right)^\theta t^\alpha \,\mathrm{d}t\right)^{\frac{1}{p}}\left(\int_x^R\left(\ln\frac{\mathrm{e}R}{t}\right)^{-\frac{\mu}{q-1}}t^{-\frac{\nu}{q-1}}\,\mathrm{d}t\right)^{\frac{q-1}{q}}<\infty  \\
     \displaystyle\lim_{x\to 0}\left(\displaystyle\int_0^x\left(\ln\frac{\mathrm{e}R}{t}\right)^\theta t^\alpha \,\mathrm{d}t\right)^{\frac{1}{p}}\left(\int_x^R\left(\ln\frac{\mathrm{e}R}{t}\right)^{-\frac{\mu}{q-1}}t^{-\frac{\nu}{q-1}}\,\mathrm{d}t\right)^{\frac{q-1}{q}}<\infty.
\end{array}\right.
\end{equation*}
From \eqref{eqr2e} and \eqref{eqr6e}, we have already established that the limit as $x\to R$ is finite. Therefore, it remains to verify whether the limit as $x\to0$ is finite, using \eqref{eqr3e} and \eqref{eqr7e}. Summarizing the conclusions for all cases, we obtain:
\begin{center}
\begin{tabular}{|c|c|}
\hline
     Case 1& \eqref{eq0012e} holds $\Leftrightarrow\alpha=-1$, $\theta<-1$, $\nu<q-1$\\
     \hline
     Case 2& \eqref{eq0012e} does not hold\\
     \hline
     Case 3& \eqref{eq0012e} holds $\Leftrightarrow\alpha=-1$, $\theta<-1$, $\nu=q-1>\mu$, and $p\leq-\frac{q(\theta+1)}{q-1-\mu}$ \\
     \hline
     Case 4& \eqref{eq0012e} holds $\Leftrightarrow\alpha=-1$, $\theta<-1$, $\nu=\mu=q-1$\\
     \hline
     Case 5& \eqref{eq0012e} holds $\Leftrightarrow\alpha=-1$, $\theta<-1$, $\nu=q-1<\mu$\\
     \hline
     Case 6& \eqref{eq0012e} holds $\Leftrightarrow\alpha>-1$, $\nu<q-1$\\
     \hline
     Case 7& \eqref{eq0012e} holds $\Leftrightarrow\alpha>-1$, $\nu>q-1$, and \\ &($\alpha>(\nu-q+1)\frac{p}{q}-1$ or ($\alpha=(\nu-q+1)\frac{p}{q}-1$ and $\frac{\theta}{p}\leq\frac{\mu}q$))\\
     \hline
     Case 8& \eqref{eq0012e} holds $\Leftrightarrow\alpha>-1$, $\nu=q-1>\mu$\\
     \hline
     Case 9& \eqref{eq0012e} holds $\Leftrightarrow\alpha>-1$, $\nu=\mu=q-1$\\
     \hline
     Case 10& \eqref{eq0012e} holds $\Leftrightarrow\alpha>-1$, $\nu=q-1<\mu$ \\
     \hline
\end{tabular}
\end{center}
Note the implications of each case:
\begin{itemize}
    \item Case 1 infers item (i);
    \item Case 3 infers item (ii);
    \item Case 4 and 5 infer item (iii);
    \item Case 6, 8, 9, and 10 infer items (iv);
    \item Cases 7 infers items (v) and (vi).
\end{itemize}
This concludes the proof.
\end{proof}

\begin{prop}\label{propOK3e}
Assume $v\in AC_{R}(0,R)$ and $1\leq p<q<\infty$. Given $\alpha,\theta,\nu,\mu\in\mathbb R$, the inequality \eqref{eqOKe} holds if and only if one of the following conditions is fulfilled:
\begin{flushleft}
    $\mathrm{(i)}$ $\alpha=-1$, $\theta<-1$, and $\nu<q-1$;\\
    $\mathrm{(ii)}$ $\alpha=-1$, $\theta<-1$, $\nu=q-1$, $\mu<q-1$, and $p<-\frac{(\theta+1)q}{q-1-\mu}$;\\
    $\mathrm{(iii)}$ $\alpha=-1$, $\theta<-1$, $\nu=q-1$, and $\mu\geq q-1$;\\
    $\mathrm{(iv)}$ $\alpha>-1$ and $\nu\leq q-1$;\\
    $\mathrm{(v)}$ $\alpha>-1$, $\nu>q-1$, and $p<\frac{(\alpha+1)q}{\nu-q+1}$;\\
    $\mathrm{(vi)}$ $\alpha>-1$, $\nu>q-1$, $p=\frac{(\alpha+1)q}{\nu-q+1}$, and $p(\mu+1)>q(\theta+1)$.
\end{flushleft}
\end{prop}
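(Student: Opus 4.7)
The strategy mirrors that of Proposition~\ref{propOK3}. By \cite[Theorem 6.3]{zbMATH00046945}, the inequality \eqref{eqOKe} is equivalent to the finiteness of
\begin{equation*}
I:=\int_0^R F(x)^{\frac{q}{q-p}}G(x)^{q\frac{p-1}{q-p}}\left(\ln\frac{\mathrm{e}R}{x}\right)^{-\frac{\mu}{q-1}}x^{-\frac{\nu}{q-1}}\,\mathrm{d}x,
\end{equation*}
where $F(x):=\int_0^x(\ln\frac{\mathrm{e}R}{t})^\theta t^\alpha\,\mathrm{d}t$ and $G(x):=\int_x^R(\ln\frac{\mathrm{e}R}{t})^{-\mu/(q-1)}t^{-\nu/(q-1)}\,\mathrm{d}t$. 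Denoting the integrand by $A(x)$, the plan is to split $I=\int_0^{R/2}A(x)\,\mathrm{d}x+\int_{R/2}^RA(x)\,\mathrm{d}x$ and to analyse each half using the asymptotics \eqref{eqr2e}--\eqref{eqr7e} already established in this subsection.

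Near $x=R$ the analysis is trivial. By \eqref{eqr2e}, $F(R)$ is finite precisely when $\alpha>-1$ or $(\alpha=-1,\theta<-1)$, which are therefore the only regimes in which \eqref{eqOKe} can possibly hold; by \eqref{eqr6e}, $G(x)=O(\ln(R/x))$; and the factor $(\ln\frac{\mathrm{e}R}{x})^{-\mu/(q-1)}x^{-\nu/(q-1)}$ is bounded on $[R/2,R]$ because $\ln(\mathrm{e}R/x)\to 1$. Hence $A(x)=O((\ln(R/x))^{q(p-1)/(q-p)})$ on $[R/2,R]$, and the exponent $q(p-1)/(q-p)\geq 0$ (since $1\leq p<q$) ensures $\int_{R/2}^RA(x)\,\mathrm{d}x<\infty$ unconditionally. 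All restrictions therefore emanate from the behaviour at $x=0^+$.

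For the integral on $(0,R/2)$, I substitute \eqref{eqr3e} for $F$ and \eqref{eqr7e} for $G$, and test integrability using the elementary criterion \eqref{eq692}, which (since $\ln(\mathrm{e}R/x)\sim\ln(1/x)$ as $x\to 0^+$) requires the power of $x$ in $A(x)$ to exceed $-1$, or to equal $-1$ with the accompanying log-power strictly below $-1$. The analysis partitions naturally into a $2\times 5$ grid: two rows $(\alpha=-1,\theta<-1)$ and $(\alpha>-1)$, crossed with the five columns $\nu<q-1$, $\nu>q-1$, $\nu=q-1>\mu$, $\nu=\mu=q-1$, $\nu=q-1<\mu$, exactly as used in Proposition~\ref{propOK2e}. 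The purely logarithmic cells (those with $\nu\leq q-1$) turn out to impose no further constraint beyond $\alpha>-1$ or $\theta<-1$, while the cells with $\nu>q-1$ yield sharp strict inequalities on $p$ of the form $p<\frac{(\alpha+1)q}{\nu-q+1}$ or $p<-\frac{q(\theta+1)}{q-1-\mu}$.

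The main obstacle, as in Proposition~\ref{propOK3}, is the borderline cell $\alpha>-1$, $\nu>q-1$, $p=\frac{(\alpha+1)q}{\nu-q+1}$. A short algebraic computation shows that under this equality the power of $x$ in $A(x)$ collapses to exactly $-1$, so \eqref{eq692} demands that the associated log-power be strictly less than $-1$; simplifying the resulting inequality reduces it to the clean condition $p(\mu+1)>q(\theta+1)$, which is precisely item~(vi). Collecting outcomes across all ten cells then delivers the six-item list: items (i) and (iii) come from the $\alpha=-1,\theta<-1$ cells with $\nu\leq q-1$, item (ii) from the $\nu=q-1>\mu$ cell with $\alpha=-1$, item (iv) from the four $\alpha>-1$ cells with $\nu\leq q-1$, and items (v)--(vi) from the two $\alpha>-1$ cells with $\nu>q-1$.
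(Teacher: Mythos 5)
Your proposal follows the paper's proof essentially exactly: same Opic--Kufner reduction to the integral condition, same split at $R/2$, same observation that the upper half is unconditionally bounded, same asymptotics \eqref{eqr3e}/\eqref{eqr7e} combined with the power-versus-log criterion \eqref{eq692e}, same $2\times5$ grid of cells, and the same reduction of the borderline exponent $p=\frac{(\alpha+1)q}{\nu-q+1}$ to the condition $p(\mu+1)>q(\theta+1)$ (which checks out: the power of $x$ in $A(x)$ works out to $\frac{q\alpha+p(q-\nu)}{q-p}$ and the log-power to $\frac{\theta q-\mu p}{q-p}$, giving precisely your claimed inequality). One small slip in your narrative: the sentence asserting that the cells with $\nu\le q-1$ ``impose no further constraint beyond $\alpha>-1$ or $\theta<-1$'' is inconsistent with your own (correct) allocation of item~(ii), since the cell $\alpha=-1$, $\theta<-1$, $\nu=q-1>\mu$ does impose the constraint $p<-\frac{(\theta+1)q}{q-1-\mu}$; likewise the cell $\alpha=-1$, $\theta<-1$, $\nu>q-1$ yields divergence for every admissible $p$ rather than a restriction of the advertised form. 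Neither affects the final tabulation, which agrees with the paper.
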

\begin{proof}
This proof is based on the same technique presented in Proposition \ref{propOK3}. By \cite[Theorem 6.3]{zbMATH00046945}, \eqref{eqOKe} holds if and only if the following integral is finite
\begin{equation}\label{eq311e}
\int_0^R\left(\int_0^x\left(\ln\frac{\mathrm{e}R}{t}\right)^\theta t^\alpha \,\mathrm{d}t\right)^{\frac{q}{q-p}}\left(\int_x^R\left(\ln\frac{\mathrm{e}R}{t}\right)^{-\frac{\mu}{q-1}}t^{-\frac{\nu}{q-1}}\,\mathrm{d}t\right)^{q\frac{p-1}{q-p}}\left(\ln\frac{\mathrm{e}R}{x}\right)^{-\frac{\mu}{q-1}}x^{-\frac{\nu}{q-1}}\, \mathrm{d} x.
\end{equation}
Making the change of variables $y=\ln\frac{\mathrm{e}R}{x}$, we note that, given $a,b\in\mathbb R$.
\begin{equation*}
\int_0^{\frac{R}2}\left(\ln\frac{\mathrm{e}R}{x}\right)^ax^b\, \mathrm{d} x=(\mathrm{e}R)^{b+1}\int_{1+\ln2}^\infty y^a\mathrm{e}^{-(b+1)y}\,\mathrm{d}y\left\{\begin{array}{ll}
    =\infty&\mbox{if }b<-1\\
     <\infty&\mbox{if }b>-1\\
     \left\{\begin{array}{ll}
        <\infty&\mbox{if }a<-1  \\
          =\infty&\mbox{if }a\geq-1 
     \end{array}\right.&\mbox{if }b=-1.
\end{array}\right.
\end{equation*}
Then
\begin{equation}\label{eq692e}
\int_0^{\frac{R}2}\left(\ln\frac{\mathrm{e}R}{x}\right)^ax^b\, \mathrm{d} x<\infty\Leftrightarrow b>-1\mbox{ or }\left(b=-1\mbox{ and }a<-1\right).
\end{equation}

The analysis is divided into ten cases:
\begin{center}
\begin{tabular}{ |c|c|c|c|c|c| } 
 \hline
  & $\nu<q-1$ & $\nu>q-1$ & $\nu=q-1>\mu$ & $\nu=\mu=q-1$ & $\nu=q-1<\mu$\\
  \hline
 $\alpha=-1,\theta<-1$ & Case 1 & Case 2 & Case 3 & Case 4 & Case 5\\ 
 \hline
 $\alpha>-1$ & Case 6 & Case 7 & Case 8 & Case 9 & Case 10\\ 
 \hline
\end{tabular}
\end{center}
In each case, we proceed as follows. By splitting the integral in half and observing that the upper half is bounded, we obtain
\begin{equation*}
\mbox{\eqref{eq311e} holds}\Leftrightarrow\int_0^{\frac{R}{2}}A(x)\, \mathrm{d} x<\infty,
\end{equation*}
where $A(x)=A_{p,q,\alpha,\theta,\nu,\mu,x,R}(x)>0$ is the integrand of \eqref{eq311e}. To determine when $\int_0^{\frac{R}{2}}A(x)\, \mathrm{d} x$ is finite, we apply \eqref{eqr3e}, \eqref{eqr7e}, and \eqref{eq692e} in each instance. Summarizing the results across all cases, we obtain:

\begin{center}
\begin{tabular}{|c|c|}
\hline
     Case 1& \eqref{eq311e} is finite $\Leftrightarrow$ $\alpha=-1$, $\theta<-1$, and $\nu<q-1$\\
     \hline
     Case 2& \eqref{eq311e} is infinite \\
     \hline
     Case 3& \eqref{eq311e} is finite $\Leftrightarrow$ $\alpha=-1$, $\theta<-1$, $\nu=q-1$, $\mu<q-1$, and $p<-\frac{(\theta+1)q}{q-1-\mu}$\\
     \hline
     Case 4& \eqref{eq311e} is finite $\Leftrightarrow$ $\alpha=-1$, $\theta<-1$, $\nu=q-1$, and $\mu=q-1$\\
     \hline
     Case 5& \eqref{eq311e} is finite $\Leftrightarrow$ $\alpha=-1$, $\theta<-1$, $\nu=q-1$, and $\mu>q-1$\\
     \hline
     Case 6& \eqref{eq311e} is finite $\Leftrightarrow$ $\alpha>-1$ and $\nu<q-1$\\
     \hline
     Case 7& \eqref{eq311e} is finite $\Leftrightarrow$ $\alpha>-1$, $\nu>q-1$, and\\ &($p<\frac{(\alpha+1)q}{\nu-q+1}$ or ($p=\frac{(\alpha+1)q}{\nu-q+1}$ and $p(\mu+1)>q(\theta+1)$))\\
     \hline
     Case 8& \eqref{eq311e} is finite $\Leftrightarrow$ $\alpha>-1$, $\nu=q-1$, and $\mu<q-1$\\
     \hline
     Case 9& \eqref{eq311e} is finite $\Leftrightarrow$ $\alpha>-1$, $\nu=q-1$, and $\mu=q-1$\\
     \hline
     Case 10& \eqref{eq311e} is finite $\Leftrightarrow$ $\alpha>-1$, $\nu=q-1$, and $\mu>q-1$\\
     \hline
\end{tabular}
\end{center}

Note the implications of each case:
\begin{itemize}
    \item Case 1 infers item (i);
    \item Case 3 infers item (ii);
     \item Cases 4 and 5 infer item (iii);
    \item Cases 6, 8, 9, and 10 infer item (iv);
    \item Case 7 infers items (v) and (vi).
\end{itemize}
This concludes the proof of the proposition.
\end{proof}

Now that Propositions \ref{hardy1e}, \ref{propOK2e}, and \ref{propOK3e} have been established, we can proceed to prove Theorem \ref{theo21e}.

\begin{proof}[Proof of Theorem \ref{theo21e}]
Here, we apply the previously established propositions to the following constants:
\begin{equation*}
q=k+1,\ \theta=0,\ \mu=\frac{\beta n}2,\mbox{ and }\nu=n-k.
\end{equation*}
The proof is divided into four cases:

\smallskip

\noindent\underline{Case 1:} When $p\geq k+1$ and $k\geq\frac{n}{2}$, we apply item (v) of Proposition \ref{hardy1e} alongside item (iv) of Proposition \ref{propOK2e}. This establishes that inequality \eqref{aosfnase} holds if and only if one of the conditions in items (i) or (ii) of Theorem \ref{theo21e} is satisfied.

\smallskip

\noindent\underline{Case 2:} When $p \geq k+1$ and $k < \frac{n}{2}$, we use items (vi), (vii), and (viii) of Proposition \ref{hardy1e} together with items (v) and (vi) of Proposition \ref{propOK2e}. This shows that inequality \eqref{aosfnase} holds if and only if one of the conditions in items (iii), (iv), (v), (vi) or (vii) of Theorem \ref{theo21e} occurs.

\smallskip

\noindent\underline{Case 3:} When $p < k+1$ and $k \geq \frac{n}{2}$, we rely on item (iv) of Proposition \ref{propOK3e}, demonstrating that inequality \eqref{aosfnase} holds if and only if item (ii) of Theorem \ref{theo21e} is satisfied.

\smallskip

\noindent\underline{Case 4:} When $p < k+1$ and $k < \frac{n}{2}$, we apply items (v) and (vi) of Proposition \ref{propOK3e}, establishing that inequality \eqref{aosfnase} holds if and only if one of the conditions in items (vi) or (vii) of Theorem \ref{theo21e} occurs.
\end{proof}

\section{Transported log-weighted Trudinger-Moser inequalities}\label{section3}
In this section, we will discuss log-weighted Trudinger-Moser type inequalities in the setting of weighted Sobolev spaces $X^{1,k+1}_{1,w}$. We recall that
\begin{equation}\label{XR1}
X^{1,k+1}_{1,w}=\mathrm{cl}\Big\{v\in AC_{R}(0,1) \;:\; \|v\|_{w}=\Big(c_{n}\int_{0}^{1}r^{n-k}|v^{\prime}|^{k+1}w\,\mathrm{d} r\Big)^{\frac{1}{k+1}}<\infty \Big\},
\end{equation}
where the closure is taken with respect to the norm $\|\cdot\|_w$, and $c_n$ is defined in \eqref{critical-constant}. 

We note that, from \eqref{normPhi} and \eqref{XR1}, for any $u\in \Phi^{k}_{0,\mathrm{rad}}(B, w)$ by setting $v(r)=u(|x|)$ with $r=|x|$, we obtain $v\in X^{1,k+1}_{1,w} $ and, in addition
\begin{equation}\label{gate-norm}
\|u\|_{\Phi, w}=\|v\|_{w}
\end{equation}
and 
\begin{equation}\label{gate-fuctional}
\int_{B}F(u)\,\mathrm{d} r=\omega_{n-1}\int_{0}^{1}r^{n-1}F(v)\,\mathrm{d} r
\end{equation}
where $s\mapsto F(s)$ can be any of the prototypes
$$
F(s)=\mathrm{e}^{|s|^{\gamma}}, \;\; F(s)=\mathrm{e}^{\alpha |s|^{\gamma_{n,\beta}}},\;\; F(s)=\mathrm{e}^{\mathrm{e}^{|s|^{\frac{n+2}{n}}}}\;\;\text {or} \;\; F(s)=\mathrm{e}^{a\mathrm{e}^{c_n|s|^{\frac{n+2}{n}}}},
$$ 
provided that the integral is well-defined.

The identities \eqref{gate-norm} and \eqref{gate-fuctional} will serve as bridges, allowing us to investigate the estimates in $\Phi^{k}_{0,\mathrm{rad}}(B, w)$ through their corresponding ones in $X^{1,k+1}_{1,w}$.
\begin{lemma}\label{r-estimate} Assume that $k=n/2$ and let $w$ be a positive weight function on $(0,1)$. Then, for any $v \in X^{1,k+1}_{1, w}$ holds
\begin{equation*}
|v(r)-v(t)| \leq A^{\frac{n}{n+2}}_{w}(r;t)\left(c_n\int_{r}^{t}s^{n-k}|v^{\prime}(s)|^{k+1}w(s)\,\mathrm{d}s\right)^{\frac{2}{n+2}},\;\; \mbox{for any}\;\; 0<r<t\leq 1
\end{equation*}
where $A_{w}:[0,1]\times[0,1]\to \mathbb{R}$ is given by
\begin{equation*}
    A_{w}(r;t)=\frac{1}{c^{\frac{2}{n}}_{n}}\int_{r}^{t}\frac{1}{sw^{\frac{2}{n}}(s)}\, \mathrm{d}s.
\end{equation*}
\end{lemma}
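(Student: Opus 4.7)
The plan is to apply the fundamental theorem of calculus followed by a single application of H\"older's inequality, with the equality $k=n/2$ coming in at the decisive step. Concretely, for $0<r<t\le 1$ and (initially) $v\in AC_1(0,1)$ with $\|v\|_w<\infty$, I would start from
\begin{equation*}
|v(r)-v(t)|\le \int_r^t |v'(s)|\,\mathrm{d}s = \int_r^t \bigl(s^{n-k}|v'(s)|^{k+1}w(s)\bigr)^{\frac{1}{k+1}}\cdot s^{-\frac{n-k}{k+1}}w(s)^{-\frac{1}{k+1}}\,\mathrm{d}s,
\end{equation*}
and then apply H\"older's inequality with conjugate exponents $k+1$ and $(k+1)/k$ to obtain
\begin{equation*}
|v(r)-v(t)|\le \Bigl(\int_r^t s^{n-k}|v'|^{k+1}w\,\mathrm{d}s\Bigr)^{\frac{1}{k+1}} \Bigl(\int_r^t s^{-\frac{n-k}{k}}w^{-\frac{1}{k}}\,\mathrm{d}s\Bigr)^{\frac{k}{k+1}}.
\end{equation*}

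The second step is where the Hessian threshold $k=n/2$ plays its role: it reduces the exponent $(n-k)/k$ to $1$ and $1/k$ to $2/n$. Consequently, the second factor becomes
\begin{equation*}
\int_r^t \frac{\mathrm{d}s}{s\,w^{2/n}(s)}=c_n^{2/n}\,A_w(r;t).
\end{equation*}
Since the same identity $k=n/2$ forces $\tfrac{1}{k+1}=\tfrac{2}{n+2}$ and $\tfrac{k}{k+1}=\tfrac{n}{n+2}$, I can absorb the factor $(c_n^{2/n})^{n/(n+2)}=c_n^{2/(n+2)}$ into the first parenthesis, rewriting $c_n^{2/(n+2)}\bigl(\cdots\bigr)^{2/(n+2)}=\bigl(c_n\cdots\bigr)^{2/(n+2)}$, which produces exactly the claimed bound.

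The computation is essentially forced once one notices the miraculous simplification $(n-k)/k=1$, so I do not anticipate a real obstacle. The only mild care point is the passage from $AC_1(0,1)$ to the abstract closure $X^{1,k+1}_{1,w}$ appearing in \eqref{XR1}; however, the pointwise estimate just derived shows that any sequence that is Cauchy in $\|\cdot\|_w$ is automatically uniformly Cauchy on every compact subset of $(0,1]$, so its limit is a genuine function on $(0,1]$ and the inequality extends by continuity, justifying the statement of the lemma for all $v\in X^{1,k+1}_{1,w}$.
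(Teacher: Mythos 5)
Your proof is correct and follows essentially the same route as the paper: the fundamental theorem of calculus followed by H\"older's inequality with conjugate exponents $k+1$ and $(k+1)/k$, using $k=n/2$ to simplify the exponents. The only cosmetic difference is that the paper carries $c_n$ inside the H\"older split from the outset while you absorb it afterwards; your closing remark on extending the inequality from $AC_1(0,1)$ to the closure $X^{1,k+1}_{1,w}$ is a careful, correct addition that the paper leaves implicit.
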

\begin{proof}
For any $v\in X^{1,k+1}_{1,w}$, the H\"{o}lder inequality yields 
\begin{equation*}
\begin{aligned}
 |v(r)-v(t)| &\leq \int_{r}^{t}|v^{\prime}(s)|\,\mathrm{d}s\\
        & = \int_{r}^{t}\left(s^{\frac{n-k}{k+1}}(c_nw(s))^{\frac{1}{k+1}}|v^{\prime}(s)|\right)\left(s^{-\frac{n-k}{k+1}}(c_nw(s))^{-\frac{1}{k+1}}\right)\,\mathrm{d}s\\
        &\leq \left(\frac{1}{c^{\frac{2}{n}}_n}\int_{r}^{t}\frac{1}{s w^{\frac{2}{n}}(s)}\,\mathrm{d}s\right)^{\frac{n}{n+2}}\left(c_n\int_{r}^{t}s^{n-k}|v^{\prime}(s)|^{k+1}w(s)\,\mathrm{d}s\right)^{\frac{2}{n+2}},
\end{aligned}
\end{equation*}
for any $0<r<t\leq 1$.
\end{proof}
\begin{cor}\label{corollary-radial} Suppose $k=n/2$ and let $w_0, w_1: (0,1)\to \mathbb{R}$ be given by $w_0(r)=\left(\ln \frac{1}{r} \right)^{\frac{\beta n}{2}}$ for $\beta<1$ and $w_1(r)=\left(\ln \frac{\mathrm{e}}{r} \right)^{\frac{\beta n}{2}}$ for $\beta\leq 1$. Then,  
\begin{enumerate}
    \item [$(a)$] For $v \in X^{1,k+1}_{1,w_0}$ we have \begin{equation*}
|v(r)| \leq \frac{1}{c^{\frac{2}{n+2}}_n(1-\beta)^{\frac{n}{n+2}}}\left(\ln \frac{1}{r}\right)^{(1-\beta)\frac{n}{n+2}}\|v\|_{w_0},\;\; \mbox{for all}\;\; 0<r<1
\end{equation*}
\item [$(b)$] For $v \in X^{1,k+1}_{1,w_1}$ we have  
\begin{equation*}
    |v(r)|\leq \left\{ \begin{aligned}
        & \frac{1}{c^{\frac{2}{n+2}}_n|1-\beta|^{\frac{n}{n+2}}}\left|\left(\ln \frac{\mathrm{e}}{r}\right)^{1-\beta}-1\right|^{\frac{n}{n+2}}\|v\|_{w_1}\;\;&\mbox{if}\;\;\beta\neq 1\\
        & \frac{1}{c^{\frac{2}{n+2}}_n}\left(\ln(\ln\frac{\mathrm{e}}{r})\right)^{\frac{n}{n+2}}\|v\|_{w_1}\;\;&\mbox{if}\;\;\beta=1
    \end{aligned}\right.
\end{equation*}
for any $r\in(0,1)$.
\end{enumerate}
\end{cor}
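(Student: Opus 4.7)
The plan is to apply Lemma~\ref{r-estimate} with $t=1$, exploiting the boundary condition $v(1)=0$ built into $AC_R(0,1)$, and then to compute the weight-dependent quantity $A_w(r;1)$ explicitly in each of the two cases. The key simplifying observation is that $k=n/2$ forces $k+1=(n+2)/2$, so the exponent $(k+1)\cdot\frac{2}{n+2}$ on the integral factor in Lemma~\ref{r-estimate} collapses to $1$. Bounding the partial integral $\int_r^1$ by $\int_0^1$ then gives the compact form
\[
|v(r)| \leq A_w(r;1)^{n/(n+2)}\,\|v\|_w,\qquad 0<r<1,
\]
and the whole corollary reduces to evaluating $A_w(r;1)$.

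For part (a), with $w_0(s)=(\ln(1/s))^{\beta n/2}$, I would substitute $u=\ln(1/s)$ in the defining integral of $A_{w_0}$, which converts it into $\int_0^{\ln(1/r)} u^{-\beta}\,du$. The hypothesis $\beta<1$ ensures integrability at $u=0$, and the substitution produces $A_{w_0}(r;1)=c_n^{-2/n}(1-\beta)^{-1}(\ln(1/r))^{1-\beta}$. Raising to the power $n/(n+2)$ yields the stated inequality. For part (b), with $w_1(s)=(\ln(e/s))^{\beta n/2}$, the analogous substitution $u=\ln(e/s)$ maps $[r,1]$ onto $[1,\ln(e/r)]$, giving $A_{w_1}(r;1)=c_n^{-2/n}\int_1^{\ln(e/r)}u^{-\beta}\,du$. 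When $\beta\neq 1$ this evaluates to $c_n^{-2/n}\bigl((\ln(e/r))^{1-\beta}-1\bigr)/(1-\beta)$, an expression that is automatically nonnegative because, for $r\leq 1$, one has $\ln(e/r)\geq 1$, so numerator and denominator carry the same sign. Inserting absolute values before taking the $n/(n+2)$-th power yields the first bound in (b). When $\beta=1$ the integral evaluates to $\ln(\ln(e/r))$, producing the second bound.

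No serious obstacle is anticipated; the argument is a direct substitution calculation combined with the fortunate simplification due to $k=n/2$. The only minor technicality is that Lemma~\ref{r-estimate} is formulated for $v\in AC_R(0,1)$, whereas $X^{1,k+1}_{1,w}$ is defined as a closure, so one must verify that the boundary value $v(1)=0$ and the pointwise bound pass to the limit. This I would handle by taking $t\to 1^-$ in Lemma~\ref{r-estimate}: the right-hand side has a finite limit under the stated assumptions on $\beta$ (finiteness of $A_w(r;1)$ being exactly the computations performed in the previous paragraph), which forces $v(t)$ to have a limit as $t\to 1^-$, and a standard density argument identifies that limit with $0$.
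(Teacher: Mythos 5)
Your proposal is correct and follows essentially the same path as the paper: apply Lemma~\ref{r-estimate} with $t=1$, exploit the collapse of exponents forced by $k=n/2$, and evaluate $A_{w}(r;1)$ explicitly (the paper writes down the antiderivatives directly rather than performing your $u$-substitution, but the computation is identical). Your additional remark about passing the pointwise bound through the closure defining $X^{1,k+1}_{1,w}$ is a sensible technical point that the paper leaves implicit.
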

\begin{proof}
From Lemma~\ref{r-estimate} with $t=1$, we only need to calculate $A_{w_0}(r;1)$ and $A_{w_1}(r;1)$. Firstly, 
\begin{equation*}
    \begin{aligned}
        c^{\frac{2}{n}}_{n}A_{w_0}(r;1)=\int_{r}^{1}\frac{1}{s}\left(-\ln{s}\right)^{-\beta}\,\mathrm{d}s=\frac{1}{\beta-1}\left(-\ln s\right)^{1-\beta}\Big|_{s=r}^{s=1}=\frac{1}{1-\beta}\left(\ln \frac{1}{r}\right)^{1-\beta}.
    \end{aligned}
\end{equation*}
Analogously, 
\begin{equation*}
    c^{\frac{2}{n}}_{n}A_{w_1}(r;1)=\int_{r}^{1}\frac{1}{s}\left(\ln{\frac{\mathrm{e}}{s}}\right)^{-\beta}\,\mathrm{d}s=\left\{ \begin{aligned}
        & \frac{1}{1-\beta}\left(\left(\ln \frac{\mathrm{e}}{r}\right)^{1-\beta}-1\right)\;\;&\mbox{if}\;\;\beta\neq 1\\
        & \ln(\ln\frac{\mathrm{e}}{r})\;\;&\mbox{if}\;\;\beta=1.
    \end{aligned}\right.
\end{equation*}
\end{proof}
In order to deal with either \textit{critical case} $\alpha=\alpha_{n,\beta}$ in Theorem~\ref{thm1}-$(b)$ or \textit{critical exponent} $a=n$ in Theorem~\ref{thm2}-$(b)$, we will make use of an integral inequality due to Leckband~\cite[Theorem~3]{zbMATH03867635}, which we will describe below.
\begin{definition}
    A function $N: [0, \infty) \rightarrow [0, \infty)$ will be called a $C^*$-convex function if $N(0) = 0$, $N$ is convex, $N \in C^1[0, \infty)$, and $\rho$ defined by the differential equation $\rho(N(t)) = N^{\prime}(t)$ is such that there exists a constant $C_\rho < \infty$ such that for $0 < d < \infty$, we have a constant $C_d < \infty$ with
\[
\rho((l + d)s) \leq C_\rho \cdot \rho((l - 1)s),
\]
for all $l > C_d$ and $0 < s < \infty$.
\end{definition}
 Some examples of $C^*$-convex functions are $N(t) = \mathrm{e}^t - 1$, $N(t) = \mathrm{e}^{t^2} - 1$, and $N(t) = t^p$, $p \geq 1$.

The following result of Leckband~\cite{zbMATH03867635} is a general integral inequality that extends a previous result by Neugebauer~\cite{zbMATH03758615} and Moser~\cite{zbMATH03323360}; see also \cite{zbMATH04132525}.
\begin{theoremletter}[Leckband\cite{zbMATH03867635}]\label{Leckband} Let $f \in L^p(0, \infty)$ such that $\|f\|_p \leq 1$ with $1\leq p<\infty$, $\varphi : [0,\infty) \rightarrow [0,\infty)$ locally integrable, and set
\[
G(x) = \left( \int_0^x \varphi^{\frac{p}{p-1}}(y)\,\mathrm{d}y \right)^{\frac{p-1}{p}}\;\;\mbox{and}\;\; F(x) = \int_0^x f(y) \varphi(y)\,\mathrm{d}y.
\]
Let $\Phi \geq 0$ be a nonincreasing function on $[0, \infty)$, and $N(t)$ a $C^*$-convex function. Then there exists a constant $C > 0$
\[
\int_0^{\infty} \Phi \left( N(G(t)) - N(F(t)) \right)\,\mathrm{d}m^{*} \leq C \|\Phi\|_1,
\]
where $m^{*}$ is the measure induced by $N(G(t))$.
\end{theoremletter}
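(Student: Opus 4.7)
The plan is to adapt Leckband's original argument in \cite{zbMATH03867635}, which extends $L^p$ integral inequalities of Neugebauer and Moser. The starting observation is that H\"older's inequality together with $\|f\|_p\leq 1$ gives $F(x)\leq \|f\|_p G(x) \leq G(x)$ for every $x\geq 0$. Since $N$ is $C^*$-convex with $N(0)=0$ and $N'=\rho\circ N\geq 0$, the function $N$ is nondecreasing, so $N(G(t))-N(F(t))\geq 0$ and the integrand is well-defined and nonnegative.

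The next step is to change variables into the natural parameter $u=N(G(t))$, under which $\mathrm{d}m^{*}=\mathrm{d}u$. Writing $\psi(u):=N(F(t(u)))$, where $t(u)$ is the inverse of the strictly increasing map $t\mapsto N(G(t))$, one has $0\leq \psi(u)\leq u$ and
\begin{equation*}
\int_0^\infty \Phi\bigl(N(G)-N(F)\bigr)\,\mathrm{d}m^{*} = \int_0^\infty \Phi(u-\psi(u))\,\mathrm{d}u.
\end{equation*}
Because $\Phi$ is nonincreasing with $\|\Phi\|_1<\infty$, the positive measure $\mu=-\mathrm{d}\Phi$ satisfies $\Phi(s)=\mu([s,\infty))$, and Fubini gives
\begin{equation*}
\int_0^\infty \Phi(u-\psi(u))\,\mathrm{d}u = \int_0^\infty |E_\lambda|\,\mathrm{d}\mu(\lambda),\qquad E_\lambda := \{u\geq 0 : u-\psi(u)\leq \lambda\}.
\end{equation*}
Therefore, if one can establish the distributional bound $|E_\lambda|\leq C\lambda$ uniformly in $\lambda>0$, an integration by parts produces $\int_0^\infty \lambda\,\mathrm{d}\mu(\lambda)=\|\Phi\|_1$ and the theorem follows with the same constant $C$.

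The crux is thus to prove the linear measure bound $|E_\lambda|\leq C\lambda$. To attack it I would translate the condition $u-\psi(u)\leq\lambda$ back to the original variable using the identity $N(G(t))-N(F(t))=\int_{F(t)}^{G(t)}\rho(N(s))\,\mathrm{d}s$, which comes from $N'=\rho\circ N$. Combining this with the $C^*$-convexity growth estimate $\rho((l+d)s)\leq C_\rho\rho((l-1)s)$ allows one to absorb the multiplicative discrepancy between $\rho(N(F))$ and $\rho(N(G))$ on the dyadic level sets of the deficit $G-F$. The remaining ingredient is a Hardy/Moser-type control on $G-F$ stemming from $\|f\|_p\leq 1$, which, through a dyadic decomposition of $\{G-F\leq s\}$, transfers into the linear bound in $\lambda$ once one returns to the $u$-variable. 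This final dyadic comparison, exploiting simultaneously the $C^*$-convexity growth of $\rho$ and the underlying Hardy-type estimate on $G-F$, is the main technical obstacle of the proof; everything else reduces to a clean change of variable and the layer-cake representation outlined above.
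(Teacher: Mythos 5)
The paper does not prove this statement; it is cited verbatim from Leckband~\cite[Theorem~3]{zbMATH03867635} and used as a black box, so there is no in-paper argument for you to be compared against. Evaluating your sketch on its own terms: the preliminary reductions are sound. H\"older's inequality with $\|f\|_p\leq 1$ gives $F\leq G$; monotonicity of $N$ (from $N'=\rho\circ N\geq0$ and $N(0)=0$) makes the integrand nonnegative; the change of variable $u=N(G(t))$ turns $\mathrm{d}m^{*}$ into Lebesgue measure; and the layer-cake representation, combined with $\int_0^\infty\lambda\,\mathrm{d}\mu(\lambda)=\|\Phi\|_1$, correctly reformulates the conclusion as the uniform measure bound $|E_\lambda|\leq C\lambda$ for every $\lambda>0$.

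That reduction, however, is where your proposal stops, and the bound $|E_\lambda|\leq C\lambda$ \emph{is} the theorem --- everything before it is bookkeeping. Your closing paragraph concedes the bound is not established and only gestures at a dyadic decomposition exploiting the $C^*$-convexity of $\rho$ together with a Hardy-type estimate on $G-F$. You never specify which level sets are being decomposed, to which arguments $l$, $d$, $s$ the defining estimate $\rho((l+d)s)\leq C_\rho\,\rho((l-1)s)$ is applied, or how the single constraint $\|f\|_p\leq 1$ propagates through the dyadic pieces to give a bound linear in $\lambda$ with no additive constant (this matters: a bound of the form $A+B\lambda$ would be insufficient, since taking $\Phi=\chi_{[0,\epsilon]}$ shows that $\Phi(0)$ is not controlled by $\|\Phi\|_1$). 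That level-set counting is exactly the heart of Leckband's argument --- it generalizes the Moser and Neugebauer counting lemmas from the unweighted case --- and it cannot be left as ``the main technical obstacle.'' As written, the proposal reduces the theorem to its hardest step and does not prove it.
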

To treat Theorem~\ref{thm1}, with help of \eqref{gate-norm} and \eqref{gate-fuctional}, we will first deduce the following transported Log-weighted sharp Trudinger-Moser inequality.
\begin{prop}\label{prop-thm1}
Assume $k=n/2$ and let $\beta\in [0,1)$ and let $w=w_0$ or $w=w_1$ as given in Corollary~\ref{corollary-radial}. 
\begin{enumerate}
\item [$(a)$ ] Then
\begin{equation*}
\int_{0}^{1}r^{n-1}\mathrm{e}^{|v|^{\gamma}}\mathrm \,\mathrm{d} r < \infty, \;\forall\, v\in X^{1,k+1}_{1,w}\;\;\Longleftrightarrow\;\;\gamma\leq \gamma_{n,\beta}=\frac{n+2}{n(1-\beta)}. 
\end{equation*}
\end{enumerate}
\begin{enumerate}
\item [$(b)$ ] Let $\widetilde{\Sigma}=\{v\in X^{1,k+1}_{1,w}\,:\,\|v\|_{w}\leq 1\}$, then
\begin{equation}\label{PI-ub}
\mathcal{MT}(n,\alpha,\beta)=\sup_{v\in\widetilde{\Sigma}  }\int_{0}^{1}r^{n-1}\mathrm{e}^{\alpha|v|^{\gamma_{n,\beta}}}\,\mathrm \,\mathrm{d} r < \infty \;\Longleftrightarrow\; \alpha\leq \alpha_{n,\beta}=n\left[c^{\frac{2}{n}}_n(1-\beta)\right]^{\frac{1}{1-\beta}}.
\end{equation}
\end{enumerate}
\end{prop}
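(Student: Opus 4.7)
The plan is to pass to the weighted half-line via the change of variables $s = \ln(1/r)$ (for $w_0$) or $s = \ln(\mathrm{e}/r)$ (for $w_1$), and set $V(s) = v(\mathrm{e}^{-s})$. Using $k = n/2$ (so $n-k = k$), the norm $\|v\|_w^{k+1}$ becomes $c_n \int_0^\infty |V'(s)|^{k+1}\,s^{\beta k}\,\mathrm{d}s$, while $\int_0^1 r^{n-1}\mathrm{e}^{\alpha|v|^\gamma}\,\mathrm{d}r = \int_0^\infty \mathrm{e}^{-ns + \alpha|V|^\gamma}\,\mathrm{d}s$, with boundary condition $V(0)=0$. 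After replacing $V$ by $W(s) := \int_0^s |V'(t)|\,\mathrm{d}t$, we may assume $V \geq 0$ and $V' \geq 0$ without changing the norm while only increasing the exponential integral. The $w_1$ case is fully analogous (integration over $[1,\infty)$ with boundary $V(1)=0$).

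For part (a), sufficiency at the critical exponent $\gamma = \gamma_{n,\beta}$ is established by combining Corollary \ref{corollary-radial} with a splitting argument. Given $\epsilon > 0$, choose $r_0 < 1$ so that $c_n\int_0^{r_0} r^{n-k}|v'|^{k+1}w\,\mathrm{d}r \leq \epsilon^{k+1}$. Applying Lemma \ref{r-estimate} on $(r,r_0)$ and the triangle inequality,
\[ |v(r)| \leq |v(r_0)| + C\,\epsilon\,\bigl(\ln(1/r)\bigr)^{(1-\beta)n/(n+2)}, \quad r \in (0,r_0). \]
Using $(a+b)^{\gamma_{n,\beta}} \leq (1+\delta)\,a^{\gamma_{n,\beta}} + C_\delta\, b^{\gamma_{n,\beta}}$ together with the identity $(1-\beta)n\gamma_{n,\beta}/(n+2) = 1$, one obtains $r^{n-1}\mathrm{e}^{|v|^{\gamma_{n,\beta}}} \lesssim r^{n-1-\eta}$ with $\eta < n$ for $\epsilon$ small, giving integrability on $(0,r_0)$; on $[r_0,1]$ the Sobolev embedding of Corollary \ref{thm0} yields $v \in L^\infty$. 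The subcritical case $\gamma < \gamma_{n,\beta}$ is immediate from the same pointwise bound. For the necessity direction ($\gamma > \gamma_{n,\beta}$), pick $a$ with $1/\gamma < a < 1/\gamma_{n,\beta} = (1-\beta)n/(n+2)$ (nonempty interval) and consider the test function equal to $(\ln(1/r))^a$ on $(0,1/2)$ and smoothly vanishing at $r=1$; a direct computation confirms $v \in X^{1,k+1}_{1,w}$ while $|v|^\gamma = (\ln(1/r))^{a\gamma}$ with $a\gamma > 1$ forces the integral to blow up at $r = 0$.

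For part (b), sufficiency uses Leckband's Theorem \ref{Leckband} with $p = k+1$, $f(s) = c_n^{1/(k+1)} V'(s)\,s^{\beta k/(k+1)}$ (so $\|f\|_p \leq \|v\|_w \leq 1$) and $\varphi(s) = s^{-\beta k/(k+1)}$. This produces
\[ F(s) = c_n^{1/(k+1)} V(s), \qquad G(s) = \frac{s^{(1-\beta)k/(k+1)}}{(1-\beta)^{k/(k+1)}}. \]
The key step is the choice $N(t) = n\,(1-\beta)^{1/(1-\beta)}\,t^{\gamma_{n,\beta}}$, which is $C^*$-convex since $\gamma_{n,\beta} \geq 1$. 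Using $\gamma_{n,\beta} = (k+1)/(k(1-\beta))$, algebra gives exactly $N(G(s)) = ns$ and $N(F(s)) = \alpha_{n,\beta}\,V(s)^{\gamma_{n,\beta}}$, with $\alpha_{n,\beta} = n\,(c_n^{2/n}(1-\beta))^{1/(1-\beta)}$. Taking $\Phi(x) = \mathrm{e}^{-x}$ (nonincreasing with $\|\Phi\|_1 = 1$) and $\mathrm{d}m^* = (N\circ G)'(s)\,\mathrm{d}s = n\,\mathrm{d}s$, Leckband's inequality becomes
\[ n\int_0^\infty \mathrm{e}^{-ns + \alpha_{n,\beta} V(s)^{\gamma_{n,\beta}}}\,\mathrm{d}s \leq C, \]
which is the desired bound. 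For $\alpha < \alpha_{n,\beta}$ one uses monotonicity.

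For the necessity direction of (b), which is where the main technical obstacle lies, consider the H\"older-extremizing test function
\[ V_L(s) = \Bigl(\tfrac{1-\beta}{c_n}\Bigr)^{\!1/(k+1)} L^{-(1-\beta)/(k+1)}\;\frac{\min(s,L)^{1-\beta}}{1-\beta}, \]
which satisfies $\|V_L\|_w = 1$. A direct computation gives $\alpha V_L(L)^{\gamma_{n,\beta}} - nL = nL(\alpha/\alpha_{n,\beta} - 1) \to +\infty$, hence $\int_L^\infty \mathrm{e}^{-ns + \alpha V_L^{\gamma_{n,\beta}}}\,\mathrm{d}s \to \infty$, and the corresponding $v_L \in X^{1,k+1}_{1,w}$ have unit norm but arbitrarily large exponential integrals. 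The main obstacle throughout is the sharp direction of (b): one must align Leckband's abstract quantities $N$, $\varphi$, $\Phi$ so that $N\circ G$ produces precisely the linear function $ns$ (to pair with the volume factor $r^{n-1}$) while $N\circ F$ produces precisely $\alpha_{n,\beta}V^{\gamma_{n,\beta}}$ — this double matching is what forces both the exponent $\gamma_{n,\beta}$ and the constant $\alpha_{n,\beta}$, with the remainder amounting to careful bookkeeping.
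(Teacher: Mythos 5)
Your proposal is correct and follows essentially the same route as the paper. The change of variables $s=\ln(1/r)$ (or $t=n\ln(1/r)$ as in the paper — a cosmetic rescaling) converts the problem to a weighted half-line; part~(a) is the Moser tail-splitting argument combined with the radial estimate of Corollary~\ref{corollary-radial}; part~(b) at the critical value $\alpha_{n,\beta}$ is Leckband's Theorem~\ref{Leckband}; and sharpness in both parts uses the same families of capped power/logarithm profiles. The one stylistic difference is where the normalizing constants live: the paper builds $c_n^{2/(n+2)}n^{n(1-\beta)/(n+2)}(1-\beta)^{n/(n+2)}$ into the substituted unknown $\psi$ so that $N(s)=s^{\gamma_{n,\beta}}$ is the bare power, whereas you keep $V(s)=v(\mathrm{e}^{-s})$ unscaled and absorb the constant into $N(t)=n(1-\beta)^{1/(1-\beta)}t^{\gamma_{n,\beta}}$; this is equally valid since $C^*$-convexity is preserved under positive scalar multiples ($\rho((l+d)s)/\rho((l-1)s)=((l+d)/(l-1))^{(p-1)/p}$ is independent of the prefactor). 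Your algebra verifying $N(G(s))=ns$ and $N(F(s))=\alpha_{n,\beta}V(s)^{\gamma_{n,\beta}}$ is correct, as is the normalization and growth computation for $V_L$. The only place you gloss over details is the $w_1$ version of the sharpness in (b), where the analogue of $V_L$ requires the extra $\bigl(\ell^{1-\beta}/(\ell^{1-\beta}-n^{1-\beta})\bigr)^{1/(k+1)}$-type correction used in the paper's $\overline v_\ell$ to keep the norm exactly $1$; this is routine bookkeeping and does not affect the conclusion, but it is worth spelling out since the finiteness direction for $w_1$ reduces to $w_0$ via the embedding $X^{1,k+1}_{1,w_1}\hookrightarrow X^{1,k+1}_{1,w_0}$ while the blow-up direction does not.
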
 
\begin{proof}
 Firstly, the inequality $w_1\ge w_0$ on $(0,1)$ yields the continuous embedding
 \begin{equation}\label{mergulho-w1w0}
     X^{1,k+1}_{1,w_1}\hookrightarrow X^{1,k+1}_{1,w_0}.
 \end{equation}
Thus, for the sufficiency part it is enough to consider the case $w(r)=w_0(r)=\left(\ln\frac{1}{r}\right)^{\frac{\beta n}{2}}$ and $\gamma=\gamma_{n,\beta}$. Further, by density we can assume that $v\in X^{1,k+1}_{1,w_0}$ is such that $v\ge0$, $v\in AC_{\mathrm{loc}}(0, 1)$ and $\lim_{r\to 1}v(r)=0.$ The change of variables
 \begin{equation}\label{changeV}
     t=n\ln\frac{1}{r},\quad \psi(t)=c_n^{\frac{2}{n+2}}n^{\frac{n}{n+2}(1-\beta)}(1-\beta)^{\frac{n}{n+2}}v(r)
 \end{equation}
 implies
 \begin{equation}\label{portal-norm}
     c_n\int_{0}^{1}r^{n-k}|\ln r|^{\frac{\beta n}{2}}|v^{\prime}(r)|^{k+1}\,\mathrm{d} r=\frac{1}{(1-\beta)^{\frac{n}{2}}}\int_{0}^{\infty}t^{\frac{\beta n}{2}}|\psi^{\prime}(t)|^{k+1}\,\mathrm{d}t
 \end{equation}
 and 
 \begin{equation}\label{portal-funcional}
     \int_{0}^{1}r^{n-1}\mathrm{e}^{\alpha|v|^{\gamma}}\,\mathrm{d} r=\int_{0}^{\infty}\mathrm{e}^{\overline{\alpha}|\psi|^{\gamma}-t}\,\mathrm{d}t
 \end{equation}
 where
 \begin{equation*}\gamma=\gamma_{n,\beta}\;\;\; \mbox{and}\;\; \; \overline{\alpha}=\frac{\alpha}{\alpha_{n,\beta}}.\end{equation*}
 To prove the integral in $(a)$ is finite, from \eqref{portal-norm} and \eqref{portal-funcional} we only need to show that 
 \begin{equation}\label{psi-Int}
     \int_{0}^{\infty}\mathrm{e}^{\overline{\alpha}|\psi|^{\gamma}-t}\,\mathrm{d}t<\infty\quad\mbox{whenever}\quad \frac{1}{(1-\beta)^{\frac{n}{2}}}\int_{0}^{\infty}t^{\frac{\beta n}{2}}|\psi^{\prime}(t)|^{k+1}\,\mathrm{d}t<\infty.
 \end{equation}
 In order to get this, we can apply the Moser's argument \cite{zbMATH03323360}. In fact, for all $\epsilon>0$ there exists $T=T(\epsilon)$ such that 
 \begin{equation*}
  \frac{1}{(1-\beta)^{\frac{n}{2}}}\int_{T}^{\infty}t^{\frac{\beta n}{2}}|\psi^{\prime}(t)|^{k+1}\,\mathrm{d}t<\epsilon^{\frac{n+2}{2}}.   
 \end{equation*}
By Hölder's inequality and $k=\frac{n}{2}$ 
\begin{equation}\label{Holder-radial}
\begin{aligned}
\psi(t) & = \psi(T) + \int_T^t \psi^{\prime}(s) s^{\frac{\beta n}{n+2}} s^{-\frac{\beta n}{n+2}} \,\mathrm{d}s \\
&\leq \psi(T) + \left( \int_T^t s^{\frac{\beta n}{2}}|\psi^{\prime}(s)|^{k+1} \,\mathrm{d}s \right)^{\frac{2}{n+2}} \left( \frac{t^{1-\beta} - T^{1-\beta}}{1-\beta} \right)^{\frac{n}{n+2}} \\
&\leq \psi(T) + \epsilon t^{\frac{1}{\gamma_{n,\beta}}}\left[ \frac{1}{1-\beta}\left(1 - \left(\frac{T}{t}\right)^{1-\beta}\right) \right]^{\frac{n}{n+2}} 
\end{aligned}
\end{equation}
for all $ t \geq T.$ Then, $\lim_{t\to\infty}\psi(t)/t^{1/\gamma_{n,\beta}}=0$ and, thus there exists $\overline{T}$ such that
\[
\overline{\alpha} \psi^{\gamma_{n,\beta}}(t)\leq \frac{t}{2}, \quad \text{for all } t \geq \overline{T}.
\]
From \eqref{changeV}, we also have $\lim_{t\to 0}\psi(t)=0$. Then the above estimate ensures the existence of the integral in \eqref{portal-funcional}. The proof of the uniform estimate in \eqref{PI-ub} in the subcritical case $\alpha<\alpha_{n,\beta}$ i.e. $\overline{\alpha}<1$ follows from Corollary~\ref{corollary-radial}. In fact, suppose that $\|v\|_{X,w_0}\leq 1$, then from \eqref{changeV}, \eqref{portal-norm} and Corollary~\ref{corollary-radial}-$(a)$ we can write
\begin{equation}\label{psi-radial}
    \psi^{\gamma_{n,\beta}}(t)\leq t, \;\;\forall\, t>0.
\end{equation}
Hence, if $\overline{\alpha}<1$, the estimate \eqref{psi-radial} and \eqref{portal-funcional} yield
\begin{equation*}
    \int_{0}^{1}r^{n-1}\mathrm{e}^{\alpha|v|^{\gamma_{n,\beta}}}\,\mathrm{d} r=\int_{0}^{\infty}\mathrm{e}^{\overline{\alpha}\psi^{\gamma_{n,\beta}}(t)-t}\,\mathrm{d}t\leq  \int_{0}^{\infty}\mathrm{e}^{\overline{\alpha}t-t}\,\mathrm{d}t=\frac{1}{1-\overline{\alpha}}<\infty.
\end{equation*}
The critical case $\overline{\alpha}=1$, however, is more delicate and we will need to use the integral inequality in Theorem~\ref{Leckband}. Indeed, we set
\begin{equation*}
    f(t)=\psi^{\prime}(t)\left(\frac{t^{\beta}}{1-\beta}\right)^{\frac{n}{n+2}}\quad\mbox{and}\quad \varphi(t)=\left(\frac{t^{\beta}}{1-\beta}\right)^{-\frac{n}{n+2}}
\end{equation*}
where $\psi$ is given by \eqref{changeV}. Then, $f$ satisfies 
\begin{equation*}
    \int_{0}^{\infty}|f(t)|^{k+1}\,\mathrm{d}t=\frac{1}{(1-\beta)^{\frac{n}{2}}}\int_{0}^{\infty}t^{\frac{\beta n}{2}}|\psi^{\prime}(t)|^{k+1}\,\mathrm{d}t=\|v\|^{k+1}_{w_0}\leq 1.
\end{equation*}
In addition, from $k=n/2$
\begin{equation*}
   G(x)=\left(\int_{0}^{x}\varphi^{\frac{k+1}{k}}(t)\,\mathrm{d}t\right)^{\frac{k}{k+1}} =\left((1-\beta)\int_{0}^{x} t^{-\beta}\,\mathrm{d}t\right)^{\frac{n}{n+2}}=x^{(1-\beta)\frac{n}{n+2}}
\end{equation*}
and
\begin{equation*}
    F(x)=\int_{0}^{x}f(t)\varphi(t)\,\mathrm{d}t=\int_{0}^{x}\psi^{\prime}(t)\,\mathrm{d}t=\psi(x).
\end{equation*}
Lastly, by setting
\begin{equation*}
    N(s)=s^{\frac{n+2}{n(1-\beta)}}\quad\mbox{and}\quad \Phi(s)=\mathrm{e}^{-s}
\end{equation*}
we obtain
\begin{equation*}
    N(G(t))=t\quad\mbox{and}\quad N(F(t))=\psi^{\gamma_{n,\beta}}(t).
\end{equation*}
Applying Theorem~\ref{Leckband} with the above choices we have
\begin{equation*}
  \int_{0}^{1}r^{n-1}\mathrm{e}^{\alpha_{n,\beta}|v|^{\gamma_{n,\beta}}}\,\mathrm{d} r=   \int_{0}^{\infty}\mathrm{e}^{\psi^{\gamma_{n,\beta}}(t)-t}\,\mathrm{d} r\leq C\int_{0}^{\infty}\mathrm{e}^ {-t}\,\mathrm{d}t=C.
\end{equation*}
\paragraph{\textit{Sharpness (a).}} In view of \eqref{mergulho-w1w0}, it is sufficient to show sharpness of $\gamma_{n,\beta}$ for $w=w_1(r)=(\ln\frac{\mathrm{e}}{r})^{\frac{\beta n}{2}}$ with $\beta\in [0,1)$. Suppose that $\gamma=\gamma_{n,\beta}+\epsilon$ for some $\epsilon>0$. Let $\eta>0$ be taken small enough such that $\overline{\eta}:=\frac{\epsilon}{\gamma_{n,\beta}}-\eta(\epsilon+\gamma_{n,\beta})>0$. Thus,
\begin{equation}\label{eta-choice}
(\gamma_{n,\beta}+\epsilon)\left(\frac{1}{\gamma_{n,\beta}}-\eta\right)=1+\frac{\epsilon}{\gamma_{n,\beta}}-\eta(\epsilon+\gamma_{n,\beta})=1+\overline{\eta}>1.
\end{equation}
Let us take $v:(0,1]\to\mathbb{R}$ be given by
\begin{equation}\label{v-optimalMoser}
    v(r)=-\left\{\begin{aligned}
    & \left(n\ln\frac{1}{r}\right)^{\frac{1}{\gamma_{n,\beta}}-\eta}\quad&\mbox{if}&\quad 0<r\leq \mathrm{e}^{-\frac{1}{n}}\\
    & n\ln\frac{1}{r}\quad &\mbox{if}&\quad \mathrm{e}^{-\frac{1}{n}}\leq r\leq 1.
    \end{aligned}\right.
\end{equation}
Note that
\begin{align*}
    \int_{0}^{1}r^{n-k}w_1(r)|v^{\prime}|^{k+1}\,\mathrm{d} r&=\left(\frac{n}{\gamma_{n,\beta}}-\eta n\right)^{k+1}\int_{0}^{\mathrm{e}^{-\frac{1}{n}}}r^{-1}\left(\ln\frac{\mathrm{e}}{r}\right)^{\frac{\beta n}{2}}\left(n\ln\frac{1}{r}\right)^{\left(\frac{1}{\gamma_{n,\beta}}-\eta-1\right)(k+1)}\,\mathrm{d} r\\
    &\quad+n^{k+1}\int_{\mathrm{e}^{-\frac{1}{n}}}^{1}r^{n-k}\left(\ln\frac{\mathrm{e}}{r}\right)^{\frac{\beta n}{2}}r^{-k-1}\,\mathrm{d} r\\
    &=\left(\frac{n}{\gamma_{n,\beta}}-\eta n\right)^{k+1}\int_{0}^{\mathrm{e}^{-\frac{1}{n}}}r^{-1}\left(\ln\frac{\mathrm{e}}{r}\right)^{\frac{\beta n}{2}}\left(n\ln\frac{1}{r}\right)^{\left(\frac{1}{\gamma_{n,\beta}}-\eta-1\right)(k+1)}\,\mathrm{d} r\\
    &\quad+n^{k+1}\int_{\mathrm{e}^{-\frac{1}{n}}}^{1}r^{-1}\left(\ln\frac{\mathrm{e}}{r}\right)^{\frac{\beta n}{2}}\,\mathrm{d} r    
\end{align*}
and, by setting $t=n\ln\frac{1}{r}$ we obtain
\begin{align*}
    \int_{0}^{\mathrm{e}^{-\frac{1}{n}}}r^{-1}\left(\ln\frac{\mathrm{e}}{r}\right)^{\frac{\beta n}{2}}\left(n\ln\frac{1}{r}\right)^{\left(\frac{1}{\gamma_{n,\beta}}-\eta-1\right)(k+1)}&\,\mathrm{d} r=\frac{1}{n}\int_{1}^{\infty}\left(1+\frac{t}{n}\right)^{\frac{\beta n}{2}}t^{\left(\frac{1}{\gamma_{n,\beta}}-\eta-1\right)(k+1)}\,\mathrm{d}t\\
    &\leq \frac{1}{n}\left(1+\frac{1}{n}\right)^{\frac{\beta n}{2}}\int_{1}^{\infty}t^{\left(\frac{1}{\gamma_{n,\beta}}-\eta-1\right)(k+1)+\frac{\beta n}{2}}\,\mathrm{d}t\\
     &=\frac{1}{n}\left(1+\frac{1}{n}\right)^{\frac{\beta n}{2}}\int_{1}^{\infty}t^{-\eta(k+1)-1}\,\mathrm{d}t<\infty.
\end{align*}
Thus, we get $v\in X^{1,k+1}_{1,w_1}$. In addition, from \eqref{eta-choice}
\begin{align*}
    \int_{0}^{1}r^{n-1}\mathrm{e}^{|v|^{\gamma}}\,\mathrm{d} r &\ge \int_{0}^{\mathrm{e}^{-\frac{1}{n}}}r^{n-1}\mathrm{e}^{(n\ln\frac{1}{r})^{(\frac{1}{\gamma_{n,\beta}}-\eta)(\gamma_{n,\beta}+\epsilon)}}\,\mathrm{d} r\\
    &=\int_{0}^{\mathrm{e}^{-\frac{1}{n}}}r^{n-1}\mathrm{e}^{(n\ln\frac{1}{r})^{1+\overline{\eta}}}\,\mathrm{d} r\\
    &=\frac{1}{n}\int_{1}^{\infty}\mathrm{e}^{t^{1+\overline{\eta}}-t}\,\mathrm{d}t=\infty.
\end{align*}

\paragraph{\textit{Sharpness (b).}} Firstly, we will treat the case $w=w_0(r)=(\ln\frac{1}{r})^{\frac{\beta n}{2}}$. For each $\ell\in\mathbb{N}$, let us define $v_{\ell}:(0,1]\to\mathbb{R}$ given by
\begin{equation}\label{velloptimalMoser}
    v_{\ell}(r)=-\left(\frac{\ell}{\alpha_{n,\beta}}\right)^{\frac{1}{\gamma_{n,\beta}}}\left\{\begin{aligned}
    & 1\quad &\mbox{if}&\quad 0<r\leq  \mathrm{e}^{-\frac{\ell}{n}}\\
    & \frac{\left(n\ln\frac{1}{r}\right)^{1-\beta}}{\ell^{1-\beta}}\quad&\mbox{if}&\quad  \mathrm{e}^{-\frac{\ell}{n}}\leq r\leq 1.
    \end{aligned}\right.
\end{equation}
We claim that
\begin{equation}\label{eq3.20}
    c_n\int_{0}^{1}r^{n-k}w_0(r)|v^{\prime}_{\ell}|^{k+1}\mathrm{d}r=1.
\end{equation}
Indeed, by a straightforward calculation, we get
\begin{align*}
c_n\int_{0}^{1}r^{n-k}w_0(r)|v^{\prime}_{\ell}|^{k+1}\,\mathrm{d} r&=c_n\left(\frac{\ell}{\alpha_{n,\beta}}\right)^{\frac{k+1}{\gamma_{n,\beta}}}(1-\beta)^{k+1}\left(\dfrac{n}\ell\right)^{(k+1)(1-\beta)}\int_{\mathrm{e}^{-\frac{\ell}{n}}}^1r^{-1}\left(\ln\frac{1}{r}\right)^{-\beta}\,\mathrm{d} r\\
&=c_n\left(\frac{\ell}{\alpha_{n,\beta}}\right)^{\frac{k+1}{\gamma_{n,\beta}}}(1-\beta)^{k+1}\left(\dfrac{n}\ell\right)^{(k+1)(1-\beta)}\dfrac{1}{1-\beta}\left(\dfrac{n}\ell\right)^{\beta-1}\\
&=c_n\left(\frac{\ell}{\alpha_{n,\beta}}\right)^{\frac{k+1}{\gamma_{n,\beta}}}(1-\beta)^{k}\left(\dfrac{n}\ell\right)^{k(1-\beta)}=1.
\end{align*}
This concludes \eqref{eq3.20}. In addition, for $\alpha>\alpha_{n,\beta}$, we have 
\begin{align*}
    \int_{0}^{1}r^{n-1}\mathrm{e}^{\alpha |v_{\ell}|^{\gamma_{n,\beta}}}\,\mathrm{d} r &\ge \int_{0}^{\mathrm{e}^{-\frac{\ell}{n}}}r^{n-1}\mathrm{e}^{\alpha |v_{\ell}|^{\gamma_{n,\beta}}}\,\mathrm{d} r= \int_{0}^{\mathrm{e}^{-\frac{\ell}{n}}}r^{n-1}\mathrm{e}^{\alpha\frac{\ell}{\alpha_{n,\beta}}}\,\mathrm{d} r= \frac{1}{n}\mathrm{e}^{\ell(\frac{\alpha}{\alpha_{n,\beta}}-1)}\rightarrow \infty
\end{align*}
as $\ell$ goes to $\infty$. This concludes the sharpness of the item \textit{(b)} for $w=w_0(r)$. 

For the case $w=w_1(r)=(\ln\frac{\mathrm{e}}{r})^{\frac{\beta n}{2}}$, we define $\overline v_\ell\colon(0,1]\to\mathbb R$, for each $\ell>n$, given by
\begin{equation}\label{vellbar}
    \overline v_{\ell}(r)=-\left(\frac{\ell}{\alpha_{n,\beta}}\right)^{\frac{1}{\gamma_{n,\beta}}}\left(\dfrac{\ell^{1-\beta}}{\ell^{1-\beta}-n^{1-\beta}}\right)^{\frac{1}{k+1}}\left\{\begin{aligned}
    & \dfrac{\ell^{1-\beta}-n^{1-\beta}}{\ell^{1-\beta}}\quad &\mbox{if}&\quad 0<r\leq  \mathrm{e}^{1-\frac{\ell}{n}}\\
    & \frac{\left(n\ln\frac{\mathrm{e}}{r}\right)^{1-\beta}-n^{1-\beta}}{\ell^{1-\beta}}\quad&\mbox{if}&\quad  \mathrm{e}^{1-\frac{\ell}{n}}\leq r\leq 1.
    \end{aligned}\right.
\end{equation}
By applying a similar argument as used in the proof of \eqref{eq3.20}, we obtain
\begin{equation*}
    c_n\int_{0}^{1}r^{n-k}w_0(r)|\overline v^{\prime}_{\ell}|^{k+1}\,\mathrm{d} r=1.
\end{equation*}
Moreover, for $\alpha>\alpha_{n,\beta}$, we have
\begin{align*}
\int_0^1r^{n-1}\mathrm{e}^{\alpha|\overline v_\ell|^{\gamma_{n,\beta}}}\,\mathrm{d} r&\ge \int_{0}^{\mathrm{e}^{1-\frac{\ell}{n}}}r^{n-1}\mathrm{e}^{\alpha |\overline v_{\ell}|^{\gamma_{n,\beta}}}\,\mathrm{d} r\\
&= \int_{0}^{\mathrm{e}^{1-\frac{\ell}{n}}}r^{n-1}\exp\left[\alpha\frac{\ell}{\alpha_{n,\beta}}\left(\dfrac{\ell^{1-\beta}}{\ell^{1-\beta}-n^{1-\beta}}\right)^{-\frac{k\gamma_{n,\beta}}{k+1}}\right]\,\mathrm{d} r\\
&= \frac{\mathrm{e}^n}{n}\exp\left[\ell\left(\frac{\alpha}{\alpha_{n,\beta}}\left(\dfrac{\ell^{1-\beta}}{\ell^{1-\beta}-n^{1-\beta}}\right)^{-\frac{k\gamma_{n,\beta}}{k+1}}-1)\right)\right]\overset{\ell\to\infty}\longrightarrow\infty.
\end{align*}
This concludes the sharpness of \textit{(b)} for $w=w_1(r)$, thereby concluding the proof of the proposition.
\end{proof}
In order to treat Theorem~\ref{thm2} we will use the following transported log-weighted sharp double-exponential Trudinger-Moser inequality. 
\begin{prop}\label{prop-thm2}
Assume $k=n/2$ and let $w_1(x)=\left(\ln\frac{\mathrm{e}}{|x|}\right)^{\frac{n}{2}}$. 
\begin{enumerate}
\item [$(a)$ ] Then, for any $ v\in X^{1,k+1}_{1,w_1}$
\begin{equation*}
\int_{0}^{1}r^{n-1}\mathrm{e}^{\mathrm{e}^{|v|^{\frac{n+2}{n}}}}\,\mathrm{d} r< \infty.
\end{equation*}
\end{enumerate}
\begin{enumerate}
\item [$(b)$ ] Let $\widetilde{\Sigma}_1=\{v\in X^{1,k+1}_{1,w_1}\,:\, \|v\|_{w_1}\leq 1\}$, then
\begin{equation*}
\sup_{v\in \widetilde{\Sigma}_1}\int_{0}^{1}r^{n-1}\mathrm{e}^{a\mathrm{e}^{c^{\frac{2}{n}}_n|v|^{\frac{n+2}{n}}}}\,\mathrm{d} r < \infty\;\;\Longleftrightarrow\;\; a\leq n.
\end{equation*}
\end{enumerate}
\end{prop}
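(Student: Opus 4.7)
The plan is to run the same transport as in Proposition~\ref{prop-thm1}. Setting $g(t)=v(\mathrm{e}^{-t/n})$ one has
\begin{equation*}
\|v\|_{w_1}^{k+1}=c_n\int_0^{\infty}(n+t)^{n/2}\,|g'(t)|^{k+1}\,\mathrm{d} t,\qquad \int_0^1 r^{n-1}\mathrm{e}^{a\mathrm{e}^{c_n^{2/n}|v|^{(n+2)/n}}}\,\mathrm{d} r=\tfrac1n\int_0^{\infty}\mathrm{e}^{-t}\mathrm{e}^{a\mathrm{e}^{c_n^{2/n}|g|^{(n+2)/n}}}\,\mathrm{d} t,
\end{equation*}
while Corollary~\ref{corollary-radial}$(b)$ with $\beta=1$ supplies the radial bound $c_n^{2/n}|g(t)|^{(n+2)/n}\leq\|v\|_{w_1}^{(n+2)/n}\ln((n+t)/n)$. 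This reduces the whole proposition to bounds on half-line integrals of the shape $\int_0^\infty\mathrm{e}^{-t}\exp(a\mathrm{e}^{c_n^{2/n}|g|^{(n+2)/n}})\,\mathrm{d} t$.

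Part~$(a)$ will follow from a Moser tail-splitting argument. Given $\eta>0$ small and $v\in X^{1,k+1}_{1,w_1}$, pick $T=T(v)$ so that $\mu_T:=c_n\int_T^\infty(n+s)^{n/2}|g'|^{k+1}\,\mathrm{d} s$ is small; H\"older's inequality combined with the elementary $(a+b)^{(k+1)/k}\leq(1+\eta)a^{(k+1)/k}+C_\eta b^{(k+1)/k}$ yields, for $t\geq T$,
\begin{equation*}
|g(t)|^{(n+2)/n}\leq(1+\eta)g(T)^{(n+2)/n}+\kappa_T\ln\!\tfrac{n+t}{n+T},\qquad \kappa_T\to 0\text{ as }T\to\infty.
\end{equation*}
After double-exponentiating, the integrand on $[T,\infty)$ is dominated by $\exp(C'(n+t)^{\kappa_T}-t)$, which is integrable as soon as $\kappa_T<1$; on $[0,T]$, $g$ is bounded by Corollary~\ref{corollary-radial}$(b)$.

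The sufficiency part of $(b)$ is the heart of the proof and hinges on applying Leckband's Theorem~\ref{Leckband} with the $C^*$-convex function chosen so that $N\circ G$ becomes the identity, turning the induced measure $\mathrm{d} m^*$ into Lebesgue measure. Take
\begin{equation*}
f(t)=(c_n(n+t)^{n/2})^{1/(k+1)}g'(t),\quad \varphi(t)=(c_n(n+t)^{n/2})^{-1/(k+1)},\quad \Phi(x)=\mathrm{e}^{-x},\quad N(s)=n(\mathrm{e}^{c_n^{2/n}s^{(n+2)/n}}-1).
\end{equation*}
One then checks $\|f\|_{k+1}\leq 1$ (from $\|v\|_{w_1}\leq 1$), $F(t)=g(t)$, $G(t)=c_n^{-1/(k+1)}[\ln((n+t)/n)]^{k/(k+1)}$, and $N(G(t))=t$, so $\mathrm{d} m^*=\mathrm{d} t$. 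Leckband's inequality then reads $\int_0^\infty\mathrm{e}^{N(g(t))-t}\,\mathrm{d} t\leq C$, which after substituting $N(g)=n\mathrm{e}^{c_n^{2/n}|g|^{(n+2)/n}}-n$ is exactly the desired bound $\int_0^\infty\mathrm{e}^{-t}\exp(n\mathrm{e}^{c_n^{2/n}|g|^{(n+2)/n}})\,\mathrm{d} t\leq C\mathrm{e}^n$, giving the critical case $a=n$ and, a fortiori, every $a\leq n$. The only routine verification is the $C^*$-convexity of $N$, which reduces to showing that $\rho(y)=(n+2)c_n^{2/(n+2)}(1+y/n)[\ln(1+y/n)]^{2/(n+2)}$ satisfies the uniform ratio bound for large $l$; this is immediate from $\rho(y)\sim y(\ln y)^{2/(n+2)}$.

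For the sharpness of $a=n$, I would use the concentrating sequence
\begin{equation*}
v_\ell(r)=(c_nL_\ell)^{-1/(k+1)}\min(\ln\ln(\mathrm{e}/r),L_\ell),\qquad L_\ell=\ln((n+\ell)/n)\to\infty,
\end{equation*}
which is precisely a truncation of the transported Leckband extremal profile $G$. A direct calculation gives $\|v_\ell\|_{w_1}=1$, and on the plateau $\{r\leq\mathrm{e}^{-\ell/n}\}$ the radial bound is saturated with $c_n^{2/n}|v_\ell|^{(n+2)/n}=L_\ell$, so
\begin{equation*}
\int_0^{\mathrm{e}^{-\ell/n}}r^{n-1}\mathrm{e}^{a\mathrm{e}^{c_n^{2/n}|v_\ell|^{(n+2)/n}}}\,\mathrm{d} r=\tfrac1n\mathrm{e}^{a(n+\ell)/n-\ell}=\tfrac1n\mathrm{e}^{a+(a-n)\ell/n},
\end{equation*}
which diverges as $\ell\to\infty$ exactly when $a>n$. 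The main obstacle in the whole argument is engineering the $C^*$-convex function $N$ for the Leckband application so that the double-exponential inequality drops out in a single step; once that choice is made, both the critical bound and the sharpness reduce to direct calculations.
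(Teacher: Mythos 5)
Your proposal is correct and follows essentially the same strategy as the paper: transport to a half-line integral via an exponential change of variables, Moser tail-splitting for part $(a)$, Leckband's theorem with a $C^*$-convex $N$ chosen so that $N(G(t))=t$ for the critical bound in $(b)$, and a truncated $\ln\ln$-profile for sharpness. The only differences are cosmetic normalizations: you take $t=n\ln(1/r)$ and bake the constants $n$ and $c_n^{2/n}$ into $N$, whereas the paper takes $t=\ln(1/r)$, uses $N(s)=\mathrm{e}^{s^{(n+2)/n}}-1$ with $\Phi(s)=\mathrm{e}^{-ns}$, and handles the subcritical $a<n$ range by a separate radial-lemma estimate; both routes reach the same conclusion.
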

\begin{proof}
To prove $(a)$, we can proceed analogously to Proposition~\ref{prop-thm1}. Indeed, for $v\in X^{1,k+1}_{1,w_1}$ is such that $v\ge0$, $v\in AC_{\mathrm{loc}}(0, 1)$ and $\lim_{r\to 1}v(r)=0$, the change of variables
 \begin{equation}\label{changeVP2}
     t=\ln\frac{1}{r},\quad \psi(t)=\sigma^{\frac{2}{n+2}}v(r),\quad \sigma>0
 \end{equation}
yields
 \begin{equation}\label{portal-normP2}
     \sigma\int_{0}^{1}r^{n-k}\left|\ln \frac{\mathrm{e}}{r}\right|^{\frac{n}{2}}v^{\prime}(r)|^{k+1}\,\mathrm{d} r=\int_{0}^{\infty}(1+t)^{\frac{ n}{2}}|\psi^{\prime}(t)|^{k+1}\,\mathrm{d}t
 \end{equation}
 and, for any $a>0$ 
 \begin{equation}\label{portal-funcionalP2}
     \int_{0}^{1}r^{n-1}\mathrm{e}^{a\mathrm{e}^{\sigma^{\frac{2}{n}}|v|^{\frac{n+2}{n}}}}\,\mathrm{d} r=\int_{0}^{\infty}\mathrm{e}^{a \mathrm{e}^{|\psi|^{\frac{n+2}{n}}}-nt}\,\mathrm{d}t.
 \end{equation}
By using \eqref{portal-normP2} and \eqref{portal-funcionalP2} with $a=1$ and $\sigma=1$, we can conclude $(a)$ provided that 
 \begin{equation}\label{psi-IntP2}
     \int_{0}^{\infty}\mathrm{e}^{\mathrm{e}^{|\psi|^{\frac{n+2}{n}}}-nt}\,\mathrm{d}t<\infty\quad\mbox{whenever}\quad \int_{0}^{\infty}(1+t)^{\frac{ n}{2}}|\psi^{\prime}(t)|^{k+1}\,\mathrm{d}t<\infty
 \end{equation}
 holds. To obtain \eqref{psi-IntP2}, we can apply Moser's argument \cite{zbMATH03323360} again. In fact, for all $\epsilon>0$ there exists $T=T(\epsilon)$ such that 
 \begin{equation*}
  \int_{T}^{\infty}(1+t)^{\frac{n}{2}}|\psi^{\prime}(t)|^{k+1}\,\mathrm{d}t<\epsilon^{k+1}  
 \end{equation*}
and the Hölder's inequality yields
\begin{align*}
\psi(t) & = \psi(T) + \int_T^t \psi^{\prime}(s) (1+s)^{\frac{n}{2(k+1)}} (1+s)^{-\frac{n}{2(k+1)}} \,\mathrm{d}s \\
&\leq \psi(T) + \left( \int_T^t (1+s)^{\frac{n}{2}}|\psi^{\prime}(s)|^{k+1} \,\mathrm{d}s \right)^{\frac{1}{k+1}} \left(\ln\frac{1+t}{1+T}\right)^{\frac{k}{k+1}} \\
&\leq \psi(T) + \epsilon \left(\ln\frac{1+t}{1+T}\right)^{\frac{n}{n+2}}
\end{align*}
for all $ t \geq T.$ Thus, $\lim_{t\to \infty}\psi(t)/(\ln t)^{\frac{n}{n+2}}=0$ and there exists $\overline{T}$ such that
\begin{align*}
    |\psi(t)|^{\frac{n+2}{n}} &\le\ln t,  \quad \text{for all } t \geq \overline{T}.
\end{align*}
Hence, since we also have $\lim_{t\to 0}\psi(t)=0$
\begin{align*}
     \int_{0}^{\infty}\mathrm{e}^{\mathrm{e}^{|\psi|^{\frac{n+2}{n}}}-nt}\,\mathrm{d}t&=\int_{0}^{\overline{T}}\mathrm{e}^{\mathrm{e}^{|\psi|^{\frac{n+2}{n}}}-nt}\,\mathrm{d}t+\int_{\overline{T}}^{\infty}\mathrm{e}^{\mathrm{e}^{|\psi|^{\frac{n+2}{n}}}-nt}\,\mathrm{d}t\\
     &\leq C+\int_{\overline{T}}^{\infty}\mathrm{e}^{-(n-1)t}\,\mathrm{d}t<\infty.
\end{align*}
We first consider the item $(b)$ for the \textit{subcritical} case $a<n$. Here, the uniform estimate follows from Corollary~\ref{corollary-radial}. Indeed, for $\|v\|_{w_1}\leq 1$ Corollary~\ref{corollary-radial}-$(b)$, and the change \eqref{changeVP2} and \eqref{portal-normP2} with $\sigma=c_n$ yield
\begin{equation*}
    |\psi(t)|^{\frac{n+2}{n}}\leq \ln(1+ t), \;\;\forall\, t>0.
\end{equation*} 
Hence, \eqref{portal-funcionalP2} implies
\begin{align*}
  \int_{0}^{1}r^{n-1}\mathrm{e}^{a\mathrm{e}^{c^{\frac{2}{n}}_n|v|^{\frac{n+2}{n}}}}\,\mathrm{d} r& = \int_{0}^{\infty}\mathrm{e}^{a \mathrm{e}^{|\psi|^{\frac{n+2}{n}}}-nt}\,\mathrm{d}t\\
  & \leq \mathrm{e}^{a}\int_{0}^{\infty}\mathrm{e}^{-(n-a)t}\,\mathrm{d}t=\frac{\mathrm{e}^a}{n-a}
\end{align*}
for $a<n$. To treat the critical case $a=n$, we will use Theorem~\ref{Leckband}. Actually, we set 
\begin{equation*}
    f(t)=\psi^{\prime}(t)(1+t)^{\frac{n}{n+2}}\quad\mbox{and}\quad \varphi(t)=(1+t)^{-\frac{n}{n+2}}
\end{equation*}
where $\psi$ is given by \eqref{changeVP2} with $\sigma=c_n$. Then, from \eqref{portal-normP2} 
\begin{equation*}
    \int_{0}^{\infty}|f(t)|^{k+1}\,\mathrm{d}t=\int_{0}^{\infty}(1+t)^{\frac{n}{2}}|\psi^{\prime}(t)|^{k+1}\,\mathrm{d}t=\|v\|^{k+1}_{w_1}\leq 1,
\end{equation*}
and 
\begin{equation*}
   G(x)=\left(\int_{0}^{x}\varphi^{\frac{k+1}{k}}(t)\,\mathrm{d}t\right)^{\frac{k}{k+1}} =\left(\int_{0}^{x}\frac{1}{1+t}\,\mathrm{d}t\right)^{\frac{n}{n+2}}=(\ln(1+x))^{\frac{n}{n+2}}
\end{equation*}
and also
\begin{equation*}
    F(x)=\int_{0}^{x}f(t)\varphi(t)\,\mathrm{d}t=\int_{0}^{x}\psi^{\prime}(t)\,\mathrm{d}t=\psi(x).
\end{equation*}
Finally, by setting
\begin{equation*}
    N(s)=\mathrm{e}^{s^{\frac{n+2}{n}}}-1\quad\mbox{and}\quad \Phi(s)=\mathrm{e}^{-ns}
\end{equation*}
we obtain
\begin{equation*}
    N(G(t))=t\quad\mbox{and}\quad N(F(t))=\mathrm{e}^{|\psi(t)|^{\frac{n+2}{n}}}-1.
\end{equation*}
Thus, Theorem~\ref{Leckband} with these choices yields 
\begin{align*}
    \int_{0}^{1}r^{n-1}\mathrm{e}^{n\mathrm{e}^{c^{\frac{2}{n}}_n|v|^{\frac{n+2}{n}}}}\,\mathrm{d} r& = \int_{0}^{\infty}\mathrm{e}^{n\mathrm{e}^{|\psi|^{\frac{n+2}{n}}}-nt}\,\mathrm{d}t\\
    & =\mathrm{e}^{n}\int_{0}^{\infty}\mathrm{e}^{n\mathrm{e}^{|\psi|^{\frac{n+2}{n}}}-nt-n}\,\mathrm{d}t
    \leq C\mathrm{e}^{n}\int_{0}^{\infty}\mathrm{e}^{-nt}\,\mathrm{d}t=C_n.
\end{align*}

\paragraph{\textit{Sharpness.}} For each $\ell\in\mathbb N$, define $\widetilde v_\ell\colon(0,1]\to\mathbb R$ by
\begin{equation}\label{velltilde}
\widetilde v_\ell(r)=-\left[c_n\ln\left(1+\ell\right)\right]^{-\frac{1}{k+1}}\left\{\begin{array}{ll}
\ln\left(1+\ell\right),&\mbox{if }0<r\leq \mathrm{e}^{-\ell},\\
\ln\left(\ln\frac{\mathrm{e}}{r}\right),&\mbox{if }\mathrm{e}^{-\ell}\leq r\leq1.
\end{array}\right.
\end{equation}
A straightforward computation shows that
\begin{equation*}
c_n\int_0^1r^{n-k}w_1(r)|\widetilde v_\ell^{\prime}|^{k+1}\,\mathrm{d} r=1.
\end{equation*}
Moreover, for $a>n$, we have
\begin{equation*}
\int_0^1r^{n-1}\mathrm{e}^{a\mathrm{e}^{c_n^{\frac{2}{n}}|\widetilde v_\ell|^{\frac{n+2}{n}}}}\,\mathrm{d} r\geq\int_0^{\mathrm{e}^{-\ell}}r^{n-1}\mathrm{e}^{a\mathrm{e}^{c_n^{\frac{2}{n}}|\widetilde v_\ell|^{\frac{n+2}{n}}}}\,\mathrm{d} r= \mathrm{e}^{a\mathrm{e}^{\ln(1+\ell)}}\int_0^{\mathrm{e}^{-\ell}}r^{n-1}\,\mathrm{d} r=\dfrac{\mathrm{e}^a}{n}\mathrm{e}^{\ell(a-n)}
\end{equation*}
goes to infinity as $\ell\to\infty$. This demonstrates that $a=n$ is the sharp value, thus concluding the proof of the proposition.
\end{proof}
\section{Log-weighted Trudinger-Moser inequalities for \texorpdfstring{$k$}{}-Hessian equation}\label{section4}
In this section we prove Theorems \ref{thm1}, \ref{thm1B}, and \ref{thm2}. As previously mentioned, the main idea is to transport the problem from $\Phi_{0,\mathrm{rad}}^k(B,w)$ to $X^{1,k+1}_{1,w}$ and apply the inequalities established for $X^{1,k+1}_{1,w}$, as given in Proposition~\ref{prop-thm1}, Corollary~\ref{corollary-radial}, and Proposition~\ref{prop-thm2}. For the sharpness condition, since the sequence of functions used for the space $X^{1,k+1}_{1,w}$ either has a singularity at the origin or is not $C^2$, we need to apply a mollification argument to construct a suitable sequence of functions in $\Phi^k_{0,\mathrm{rad}}(B,w)$.
\subsection{Proof of Theorem~\ref{thm1}}
The proof of sufficiency part follows directly from Proposition~\ref{prop-thm1} together with the identities \eqref{gate-norm} and \eqref{gate-fuctional}. In order to show the sharpness conditions we will use a mollification of the functions $v$ in \eqref{v-optimalMoser}, $v_{\ell}$ in \eqref{velloptimalMoser}, and $\overline v_\ell$ in \eqref{vellbar} to build suitable sequences of $k$-admissible functions in $\Phi^{k}_{0,\mathrm{rad}}(B,w)$.

\smallskip

\paragraph{\textit{Sharpness (a).}} Consider the function $u(x):=v(|x|)$, where $v$ is given in \eqref{v-optimalMoser}. This function satisfies
\begin{equation*}
\int_B\mathrm{e}^{|u|^\gamma}\mathrm dx=\omega_{N-1}\int_0^1r^{n-1}\mathrm{e}^{|v|^\gamma}\mathrm dx.
\end{equation*}
By Proposition \ref{prop-thm1}, item $(a)$, it remains to verify that $u\in\Phi_{0,\mathrm{rad}}^k(B,w)$. Observe that $v$ satisfies $v(1)=0$ and the inequality
\begin{align}
\frac{(v^{\prime})^{j-1}}{r^{j-1}}&\left(v^{\prime\prime}+\frac{n-j}{j}\frac{v^{\prime}}{r}\right)\nonumber\\
&=\frac{(v^{\prime})^{j-1}}{r^{j-1}}\left(\dfrac{1}{\gamma_{n,\beta}}-\eta\right)\left(n\ln\frac1r\right)^{\frac{1}{\gamma_{n,\beta}}-\eta-2}\dfrac{n^2}{r^2}\left[-\left(\dfrac{1}{\gamma_{n,\beta}}-\eta-1\right)+\dfrac{n-2j}j\ln\frac1r\right]\nonumber\\
&\geq\dfrac{(v^{\prime}_\ell)^{j-1}}{r^{j-1}}C_{n}>0\label{q1a}
\end{align}
for $r\in(0,\mathrm{e}^{-\frac{1}{n}})$ and $j=1,\ldots,k$, as well as
\begin{equation}
\frac{(v^{\prime})^{j-1}}{r^{j-1}}\left(v^{\prime\prime}+\frac{n-j}{j}\frac{v^{\prime}}{r}\right)=\dfrac{(v^{\prime})^{j-1}}{r^{j-1}}\dfrac{n-2j}j\dfrac{n}{r^2}\geq0\label{q2a}
\end{equation}
for $r\in(\mathrm{e}^{-\frac{1}{n}},1)$ and $j=1,\ldots,k$. Up to a positive constant, the left-hand expressions in \eqref{q1a} and \eqref{q2a} represent the $j$-Hessian operator $S_j(D^2u)$; see \cite[Section 3]{zbMATH06712355}.

Let $\epsilon=\epsilon(n)>0$ be a parameter to be chosen later, and consider a mollifier $\varphi_\epsilon\in C^\infty(\mathbb R)$ with $\mathrm{supp}(\varphi_\epsilon)\subset(-\epsilon,\epsilon)$. We define $\widehat V_\ell$, a smooth approximation of $v$ on $[\mathrm{e}^{-\frac{\ell}{n}},1]$, as
\begin{equation*}
\widehat V_\ell(r)=\left\{\begin{array}{ll}
     v(\mathrm{e}^{-\frac{\ell}n})&\mbox{if }r\in[0,\mathrm{e}^{-\frac\ell n}]  \\
     \varphi_\epsilon*v(\widehat\eta_\epsilon(r))&\mbox{if }r\in[\mathrm{e}^{-\frac{\ell}n},\mathrm{e}^{-\frac1n}]\\
     v(r)&\mbox{if }r\in[\mathrm{e}^{-\frac1n},1],
\end{array}\right.
\end{equation*}
where $\widehat\eta_\epsilon(r)$ is a linear function such that $\widehat\eta_\epsilon(\mathrm{e}^{-\frac{\ell}{n}})=\mathrm{e}^{-\frac{\ell}{n}}-\epsilon$ and $\widehat\eta_{\epsilon}(\mathrm{e}^{-\frac1n})=\mathrm{e}^{-\frac1n}+\epsilon$. Since $v\in X^{1,k+1}_{1,w}$, we can choose $\epsilon>0$ small enough such that $\|\widehat V_\ell-v\|_w\leq\frac1\ell$. Defining $\widehat U_\ell(x):=\widehat V_\ell(|x|)$, we note that $\widehat U_\ell\to u$ with respect to the norm $\|\cdot\|_{\Phi,w}$ given in \eqref{normPhi}. 

To establish that $u\in\Phi^k_{0,\mathrm{rad}}(B,w)$ (recall the definition in \eqref{defPhi}), it remains to prove that $S_j(D^2\widehat U_\ell)\geq0$ for all $j=1,\ldots,k$, which is equivalent to show that
\begin{equation}\label{cl1a}
\widehat V_\ell^{\prime\prime}(r)+\dfrac{\widehat V_\ell^{\prime}(r)}r\geq0,\quad\forall r\in[0,1].
\end{equation}
This inequality follows immediately for $r\in[0,\mathrm{e}^{-\frac{\ell}{n}}]$, and it holds for $r\in[\mathrm{e}^{-\frac1n},1]$ due to \eqref{q2a}. To verify it for for $r\in[\mathrm{e}^{-\frac{\ell}{n}},\mathrm{e}^{-\frac1n}]$, we compute
\begin{align*}
\widehat V_\ell^{\prime\prime}+\dfrac{\widehat V_\ell^{\prime}}r&=\left(1+\dfrac{2\epsilon}{\mathrm{e}^{-\frac1n}-\mathrm{e}^{-\frac\ell n}}\right)\int_{r-\epsilon}^{r+\epsilon}\varphi_{\epsilon}(r-s)\!\left[\widehat v_\ell^{\prime\prime}(\widehat \eta_\epsilon(s))\!\left(1+\dfrac{2\epsilon}{\mathrm{e}^{-\frac1n}-\mathrm{e}^{-\frac{\ell}{n}}}\right)\!+\dfrac{\widehat v_\ell^{\prime}(\widehat \eta_\epsilon(s))}{r}\right]\!\mathrm ds\\
&=\int_{\widehat \eta^{-1}_\epsilon(r-\epsilon)}^{\widehat \eta^{-1}_\epsilon(r+\epsilon)}\!\varphi_\epsilon(r-\widehat \eta_\epsilon^{-1}(t))\left[\widehat v_\ell^{\prime\prime}(t)\dfrac{2\epsilon}{\mathrm{e}^{-\frac1n}-\mathrm{e}^{-\frac\ell n}}+\dfrac{\widehat v_\ell^\prime(t)}t\dfrac{t-r}{r}+\widehat v_\ell^{\prime\prime}(t)+\dfrac{\widehat v_\ell^\prime(t)}{t}\right]\!\mathrm dt>0,
\end{align*}
where we used \eqref{q2a} and assumed $\epsilon=\epsilon(\ell,n)>0$ to be sufficiently small. This implies that \eqref{cl1a} holds, thus completing the proof that $u\in\Phi^k_{0,\mathrm{rad}}(B,w)$.

\smallskip

\paragraph{\textit{Sharpness (b).}}
First, we consider the case $w=w_0(r)=(\ln\frac1r)^{\frac{\beta n}2}$. For now, assume $\beta>0$. Let $(v_\ell)$ be the sequence of functions given by \eqref{velloptimalMoser}, and define $u_\ell(x)=v_\ell(|x|)$. Note that $v_\ell$ satisfies $v_\ell(1)=0$ and the equation
\begin{equation}\label{q11}
\frac{(v_\ell^{\prime})^{j-1}}{r^{j-1}}\left(v^{\prime\prime}_\ell+\frac{n-j}{j}\frac{v^{\prime}_\ell}{r}\right)=0
\end{equation}
for $r\in(0,\mathrm{e}^{-\frac{\ell}{n}})$ and $j=1,\ldots,k$, as well as
\begin{align}
\frac{(v_\ell^{\prime})^{j-1}}{r^{j-1}}\left(v^{\prime\prime}_\ell+\frac{n-j}{j}\frac{v^{\prime}_\ell}{r}\right)&=\dfrac{(v^{\prime}_\ell)^{j-1}}{r^{j-1}}\left(\dfrac{\ell}{\alpha_{n,\beta}}\right)^{\frac1{\gamma_{n,\beta}}}(1-\beta)\left(\dfrac{n}{\ell}\right)^{1-\beta}\nonumber\\
&\qquad\cdot\dfrac{\left(\ln\frac1r\right)^{-\beta-1}}{r^2}\left[\beta+\dfrac{n-2j}{j}\ln\frac1r\right]\geq \dfrac{(v^{\prime}_\ell)^{j-1}}{r^{j-1}}C_{\beta n\ell}>0\label{q21}
\end{align}
for $r\in(\mathrm{e}^{-\frac{\ell}{n}},1)$ and $j=1,\ldots,k$. Up to a positive constant, the left expression in \eqref{q11} and \eqref{q21} represents the $j$-Hessian operator $S_j(D^2u_\ell)$; see \cite[Section 3]{zbMATH06712355}.

However, despite \eqref{q11} and \eqref{q21}, $u_\ell$ is not $k$-admissible because $u_\ell\notin C^2(B)$ due to $v_\ell$ not being smooth at $\mathrm{e}^{-\frac\ell n}$. To address this, let $\epsilon=\epsilon(\ell,\beta,n)>0$ be a parameter to be fixed later, and consider a mollifier $\varphi_\epsilon\in C^\infty(\mathbb R)$ with $\mathrm{supp}(\varphi_\epsilon)\subset(-\epsilon,\epsilon)$. We define $V_\ell$ a smooth approximation of $v_\ell$ by
\begin{equation*}
V_\ell(r)=\left\{\begin{array}{ll}
     v_\ell(0)&\mbox{if }r\in[0,\mathrm{e}^{-\frac\ell n}]  \\
     \varphi_\epsilon*v_\ell(\eta_\epsilon(r))&\mbox{if }r\in[\mathrm{e}^{-\frac{\ell}n},1]\\
     0&\mbox{if }r\in[1,\infty),
\end{array}\right.
\end{equation*}
where $\eta_\epsilon(r)$ is a linear function such that $\eta_\epsilon(\mathrm{e}^{-\frac{\ell}{n}})=\mathrm{e}^{-\frac{\ell}{n}}-\epsilon$ and $\eta_{\epsilon}(1)=1+\epsilon$. We claim that for sufficiently small $\epsilon>0$, we have $\|V_\ell-v_\ell\|_{w_0}\leq\frac1\ell$ and
\begin{equation}\label{cl1}
V_\ell^{\prime\prime}(r)+\dfrac{V_\ell^{\prime}(r)}r\geq0,\quad\forall r\in[0,\infty).
\end{equation}
To establish \eqref{cl1}, it suffices to verify it for $r\in(\mathrm{e}^{-\frac{\ell}{n}},1)$. Note that
\begin{align*}
V_\ell^{\prime\prime}+\dfrac{V_\ell^{\prime}}r&=\left(1+\dfrac{2\epsilon}{1-\mathrm{e}^{-\frac\ell n}}\right)\int_{r-\epsilon}^{r+\epsilon}\varphi_{\epsilon}(r-s)\left[v_\ell^{\prime\prime}(\eta_\epsilon(s))\left(1+\dfrac{2\epsilon}{1-\mathrm{e}^{-\frac{\ell}{n}}}\right)+\dfrac{v_\ell^{\prime}(\eta_\epsilon(s))}{r}\right]\mathrm ds\\
&=\int_{\eta^{-1}_\epsilon(r-\epsilon)}^{\eta^{-1}_\epsilon(r+\epsilon)}\varphi_\epsilon(r-\eta_\epsilon^{-1}(t))\left[v_\ell^{\prime\prime}(t)\dfrac{2\epsilon}{1-\mathrm{e}^{-\frac\ell n}}+\dfrac{v_\ell^\prime(t)}t\dfrac{t-r}{r}+v_\ell^{\prime\prime}(t)+\dfrac{v_\ell^\prime(t)}{t}\right]\mathrm dt>0,
\end{align*}
where we used \eqref{q21} and assumed $\epsilon=\epsilon(\ell,\beta,n)>0$ to be sufficiently small. This establishes \eqref{cl1}.

From \eqref{cl1}, we conclude that $U_\ell(x):=V_\ell(|x|)$ is a $k$-admissible function. Moreover, for any $\alpha>\alpha_{n,\beta}$,
\begin{align*}
\int_B\mathrm e^{\alpha\left|\frac{U_\ell}{\|U_\ell\|_{\Phi,w_0}}\right|^{\gamma_{n,\beta}}}\mathrm dx&\geq\omega_{n-1}\int_0^{e^{-\frac{\ell}{n}}}r^{n-1}\exp\left(\dfrac{\alpha}{\|V_\ell\|_{w_0}^{\gamma_{n,\beta}}}\frac{\ell}{\alpha_{n,\beta}}\right)\mathrm dr\\
&=\dfrac{\omega_{n-1}}n\exp\left[\ell\left(\dfrac{\alpha}{\|V_\ell\|_{w_0}^{\gamma_{n,\beta}}\alpha_{n,\beta}}-1\right)\right]\to\infty
\end{align*}
as $\ell$ goes to $\infty$. This completes the proof for the case $w=w_0$ with $\beta>0$.

For the case $w=w_0$ with $\beta=0$, we consider the same sequence $(v_\ell)$ with $\beta>0$ fixed and note that $\|v_\ell\|_{w_0}=1+o_\beta(1)$ as $\beta\to0$. We then analogously construct $V_\ell$ and $U_\ell$, concluding that the supremum in \eqref{S-attainable} is infinite.

The case $w=w_1=(\ln\frac{e}{r})^{\frac{\beta n}2}$ is very similar to the case $w=w_0$, differing only in the choice of the sequence. For completeness, we include its proof. For now, assume $\beta>0$. Consider the sequence of functions $(\overline v_\ell)$ given by \eqref{vellbar}. To verify that $\overline u_\ell(x):=\overline v_\ell(|x|)$ is $k$-admissible function, it suffices to show that
\begin{equation*}
\overline v_\ell^{\prime\prime}(r)+\dfrac{\overline v_\ell^{\prime}(r)}{r}\geq0,\quad\forall r\in[0,1].
\end{equation*}
This inequality is trivial for $r\in[0,\mathrm{e}^{1-\frac{\ell}{n}})$. On the other hand, we have
\begin{equation}\label{cl2}
\overline v_\ell^{\prime\prime}(r)+\dfrac{\overline v_\ell^{\prime}(r)}{r}=C_0\beta\dfrac{\left(\ln\frac{\mathrm{e}}{r}\right)^{-\beta-1}}{r^2}\geq C_0\beta\min_{r\in(\mathrm{e}^{1-\frac{\ell}{n}},1)}\dfrac{\left(\ln\frac{\mathrm{e}}{r}\right)^{-\beta-1}}{r^2}>0,\quad\forall r\in(\mathrm{e}^{1-\frac{\ell}{n}},1),
\end{equation}
where
\begin{equation*}
C_0=\left(\frac{\ell}{\alpha_{n,\beta}}\right)^{\frac{1}{\gamma_{n,\beta}}}\left(\dfrac{\ell^{1-\beta}}{\ell^{1-\beta}-n^{1-\beta}}\right)^{\frac{1}{k+1}}(1-\beta)\left(\dfrac{n}{\ell}\right)^{1-\beta}.
\end{equation*}

However, $\overline u_\ell$ is not a $k$-admissible function because $\overline u_\ell\notin C^2(B)$, as $\overline v_\ell$ is not smooth at $\mathrm{e}^{1-\frac\ell n}$. To address this, let $\epsilon=\epsilon(\ell,\beta,n)>0$ be a small parameter to be determined later, and let $\varphi_\epsilon\in C^\infty(\mathbb R)$ be a mollifier with $\mathrm{supp}(\varphi_\epsilon)\subset(-\epsilon,\epsilon)$. We define $\overline V_\ell$ a smooth approximation of $\overline v_\ell$ by
\begin{equation*}
\overline V_\ell(r)=\left\{\begin{array}{ll}
     \overline v_\ell(0)&\mbox{if }r\in[0,\mathrm{e}^{1-\frac\ell n}]  \\
     \varphi_\epsilon*\overline v_\ell(\overline \eta_\epsilon(r))&\mbox{if }r\in[\mathrm{e}^{1-\frac{\ell}n},1]\\
     0&\mbox{if }r\in[1,\infty),
\end{array}\right.
\end{equation*}
where $\overline \eta_\epsilon(r)$ is a linear function satisfying $\overline \eta_\epsilon(\mathrm{e}^{1-\frac{\ell}{n}})=\mathrm{e}^{1-\frac{\ell}{n}}-\epsilon$ and $\overline \eta_{\epsilon}(1)=1+\epsilon$. We claim that for sufficiently small $\epsilon>0$, we have $\|\overline V_\ell-\overline v_\ell\|_{w_1}\leq\frac1\ell$ and
\begin{equation}\label{cl11}
\overline V_\ell^{\prime\prime}(r)+\dfrac{\overline V_\ell^{\prime}(r)}r\geq0,\quad\forall r\in[0,\infty).
\end{equation}
To establish \eqref{cl11}, it suffices to check it for $r\in(\mathrm{e}^{-\frac{\ell}{n}},1)$. Note that
\begin{align*}
\overline V_\ell^{\prime\prime}+\dfrac{\overline V_\ell^{\prime}}r&=\left(1+\dfrac{2\epsilon}{1-\mathrm{e}^{1-\frac\ell n}}\right)\int_{r-\epsilon}^{r+\epsilon}\varphi_{\epsilon}(r-s)\left[\overline v_\ell^{\prime\prime}(\overline \eta_\epsilon(s))\left(1+\dfrac{2\epsilon}{1-\mathrm{e}^{1-\frac{\ell}{n}}}\right)+\dfrac{\overline v_\ell^{\prime}(\overline \eta_\epsilon(s))}{r}\right]\mathrm ds\\
&=\int_{\overline \eta^{-1}_\epsilon(r-\epsilon)}^{\overline \eta^{-1}_\epsilon(r+\epsilon)}\varphi_\epsilon(r-\overline \eta_\epsilon^{-1}(t))\left[\overline v_\ell^{\prime\prime}(t)\dfrac{2\epsilon}{1-\mathrm{e}^{1-\frac\ell n}}+\dfrac{\overline v_\ell^\prime(t)}t\dfrac{t-r}{r}+\overline v_\ell^{\prime\prime}(t)+\dfrac{\overline v_\ell^\prime(t)}{t}\right]\mathrm dt>0,
\end{align*}
where we used \eqref{cl2} and assumed $\epsilon=\epsilon(\ell,\beta,n)>0$ to be sufficiently small. This completes the proof of \eqref{cl11}.

From \eqref{cl11}, we conclude that $\overline U_\ell(x):=\overline V_\ell(|x|)$ is a $k$-admissible function. Moreover, for any $\alpha>\alpha_{n,\beta}$,
\begin{align*}
\int_B\mathrm e^{\alpha\left|\frac{\overline U_\ell}{\|\overline U_\ell\|_{\Phi,w_1}}\right|^{\gamma_{n,\beta}}}\mathrm dx&\geq\omega_{n-1}\int_0^{e^{1-\frac{\ell}{n}}}r^{n-1}\exp\left[\dfrac{\alpha}{\|\overline V_\ell\|_{w_1}^{\gamma_{n,\beta}}}\frac{\ell}{\alpha_{n,\beta}}\left(\dfrac{\ell^{1-\beta}}{\ell^{1-\beta}-n^{1-\beta}}\right)^{-\frac{k\gamma_{n,\beta}}{k+1}}\right]\mathrm dr\\
&=\dfrac{\omega_{n-1}\mathrm{e}^n}n\exp\left[\ell\left(\dfrac{\alpha}{\|\overline V_\ell\|_{w_1}^{\gamma_{n,\beta}}\alpha_{n,\beta}}\left(\dfrac{\ell^{1-\beta}}{\ell^{1-\beta}-n^{1-\beta}}\right)^{-\frac{k\gamma_{n,\beta}}{k+1}}-1\right)\right]\to\infty
\end{align*}
as $\ell$ goes to $\infty$. This completes the proof for $w=w_1$ and $\beta>0$.

For the case $w=w_1$ and $\beta=0$, we consider the same $(\overline v_\ell)$ with $\beta>0$ fixed and note that $\|\overline v_\ell\|_{w_1}=1+o_\beta(1)$ as $\beta\to0$. Then, we construct $\overline V_\ell$ and $\overline U_\ell$ analogously, concluding that the supremum in \eqref{S-attainable} is infinite.
\subsection{Proof of Theorem~\ref{thm1B}}
 It follows directly from Corollary~\ref{corollary-radial}. Indeed, if $u\in \Phi^{k}_{0,\mathrm{rad}}(B, w_1)$ by setting $v(r)=u(|x|)$ with $r=|x|$, we have $v\in X^{1,k+1}_{1,w_1} $
 and from Corollary~\ref{corollary-radial}-$(b)$, there exists $C>0$ such that 
\begin{equation*}
    |v(r)|\leq \frac{1}{c^{\frac{2}{n+2}}_n|1-\beta|^{\frac{n}{n+2}}}\left|\left(\ln \frac{\mathrm{e}}{r}\right)^{1-\beta}-1\right|^{\frac{n}{n+2}}\|v\|_{w_1}\leq C\|v\|_{w_1}
\end{equation*}
for all $r\in (0,1)$, provided that $\beta>1$. Hence, from \eqref{gate-norm} we get \begin{equation*}\|u\|_{\infty}\leq C\|v\|_{w_1}=C\|u\|_{w_1}.\end{equation*}

\subsection{Proof of Theorem~\ref{thm2}}
The proof of sufficiency part follows directly from Proposition~\ref{prop-thm2} together with the identities \eqref{gate-norm} and \eqref{gate-fuctional}. In order to show the sharpness conditions we will use a mollification of the functions $\widetilde v_\ell$ in \eqref{velltilde} to build suitable sequences of $k$-admissible functions in $\Phi^{k}_{0,\mathrm{rad}}(B,w)$.

\paragraph{\textit{Sharpness.}} Let $(\widetilde v_\ell)$ be the sequence of functions given by \eqref{velltilde}, and define $\widetilde u_\ell(x)=\widetilde v_\ell(|x|)$. Note that $\widetilde v_\ell$ satisfies $\widetilde v_\ell(1)=0$ and the equation
\begin{equation}\label{q111}
\frac{(\widetilde v_\ell^{\prime})^{j-1}}{r^{j-1}}\left(\widetilde v^{\prime\prime}_\ell+\frac{n-j}{j}\frac{\widetilde v^{\prime}_\ell}{r}\right)=0
\end{equation}
for $r\in(0,\mathrm{e}^{-\ell})$ and $j=1,\ldots,k$, as well as
\begin{align}
\frac{(\widetilde v_\ell^{\prime})^{j-1}}{r^{j-1}}\left(\widetilde v^{\prime\prime}_\ell+\frac{n-j}{j}\frac{\widetilde v^{\prime}_\ell}{r}\right)&=\dfrac{(\widetilde v^{\prime}_\ell)^{j-1}}{r^{j-1}}\dfrac{\left[c_n\ln(1+\ell)\right]^{-\frac1{k+1}}}{r^2\ln\frac{\mathrm{e}}{r}}\left(\dfrac{1}{\ln\frac{\mathrm{e}}{r}}+\frac{n-2j}{j}\right)\nonumber\\
&\geq\dfrac{(\widetilde v^{\prime}_\ell)^{j-1}}{r^{j-1}}C_{n\ell}>0\label{q211}
\end{align}
for $r\in(\mathrm{e}^{-\ell},1)$ and $j=1,\ldots,k$. Up to a positive constant, the left expression in \eqref{q111} and \eqref{q211} represents the $j$-Hessian operator $S_j(D^2\widetilde u_\ell)$; see \cite[Section 3]{zbMATH06712355}.

However, despite \eqref{q111} and \eqref{q211}, $\widetilde u_\ell$ is not $k$-admissible because $\widetilde u_\ell\notin C^2(B)$ due to $\widetilde v_\ell$ not being smooth at $\mathrm{e}^{-\ell}$. To address this, let $\epsilon=\epsilon(\ell,n)>0$ be a parameter to be fixed later, and consider a mollifier $\varphi_\epsilon\in C^\infty(\mathbb R)$ with $\mathrm{supp}(\varphi_\epsilon)\subset(-\epsilon,\epsilon)$. We define $\widetilde V_\ell$ a smooth approximation of $\widetilde v_\ell$ by
\begin{equation*}
\widetilde V_\ell(r)=\left\{\begin{array}{ll}
     \widetilde v_\ell(0)&\mbox{if }r\in[0,\mathrm{e}^{-\ell}]  \\
     \varphi_\epsilon*\widetilde v_\ell(\widetilde \eta_\epsilon(r))&\mbox{if }r\in[\mathrm{e}^{-\ell},1]\\
     0&\mbox{if }r\in[1,\infty),
\end{array}\right.
\end{equation*}
where $\widetilde \eta_\epsilon(r)$ is a linear function such that $\widetilde \eta_\epsilon(\mathrm{e}^{-\ell})=\mathrm{e}^{-\ell}-\epsilon$ and $\widetilde \eta_{\epsilon}(1)=1+\epsilon$. We claim that for sufficiently small $\epsilon>0$, we have $\|\widetilde V_\ell-\widetilde v_\ell\|_{w}\leq\frac1\ell$ and
\begin{equation}\label{cl111}
\widetilde V_\ell^{\prime\prime}(r)+\dfrac{\widetilde V_\ell^{\prime}(r)}r\geq0,\quad\forall r\in[0,\infty).
\end{equation}
To establish \eqref{cl111}, it suffices to verify it for $r\in(\mathrm{e}^{-\ell},1)$. Note that
\begin{align*}
\widetilde V_\ell^{\prime\prime}+\dfrac{\widetilde V_\ell^{\prime}}r&=\left(1+\dfrac{2\epsilon}{1-\mathrm{e}^{-\ell}}\right)\int_{r-\epsilon}^{r+\epsilon}\varphi_{\epsilon}(r-s)\left[\widetilde v_\ell^{\prime\prime}(\widetilde \eta_\epsilon(s))\left(1+\dfrac{2\epsilon}{1-\mathrm{e}^{-\ell}}\right)+\dfrac{\widetilde v_\ell^{\prime}(\widetilde \eta_\epsilon(s))}{r}\right]\mathrm ds\\
&=\int_{\widetilde \eta^{-1}_\epsilon(r-\epsilon)}^{\widetilde \eta^{-1}_\epsilon(r+\epsilon)}\varphi_\epsilon(r-\widetilde \eta_\epsilon^{-1}(t))\left[\widetilde v_\ell^{\prime\prime}(t)\dfrac{2\epsilon}{1-\mathrm{e}^{-\ell}}+\dfrac{\widetilde v_\ell^\prime(t)}t\dfrac{t-r}{r}+\widetilde v_\ell^{\prime\prime}(t)+\dfrac{\widetilde v_\ell^\prime(t)}{t}\right]\mathrm dt>0,
\end{align*}
where we used \eqref{q211} and assumed $\epsilon=\epsilon(\ell,n)>0$ to be sufficiently small. This establishes \eqref{cl111}.

From \eqref{cl111}, we conclude that $\widetilde U_\ell(x):=\widetilde V_\ell(|x|)$ is a $k$-admissible function. Moreover, for any $a>n$,
\begin{align*}
\int_B\mathrm{e}^{a\mathrm{e}^{c_n^{\frac2n}\left|\frac{\widetilde U_\ell}{\|\widetilde U_\ell\|_{\Phi,w}}\right|^{\frac{n+2}{n}}}}\mathrm dx&\geq\omega_{n-1}\int_0^{\mathrm{e}^{-\ell}}r^{n-1}\mathrm{e}^{a\mathrm{e}^{c_n^{\frac2n}\left|\frac{\widetilde V_\ell}{\|\widetilde V_\ell\|_{w}}\right|^{\frac{n+2}{n}}}}\mathrm dr=\mathrm{e}^{a\mathrm{e}^{\ln(1+\ell)\|\widetilde V_\ell\|_{w}^{-\frac{n+2}n}}}\int_0^{\mathrm{e}^{-\ell}}r^{n-1}\mathrm dr\\
&=\frac{1}{n}\exp\left(a(1+\ell)^{\|\widetilde V_\ell\|_{w}^{-\frac{n+2}n}}-n\ell\right)\\
&=\frac{1}{n}\exp\left[n\ell\left(\frac{a}{n}\frac{(1+\ell)^{\|\widetilde V_\ell\|_{w}^{-\frac{n+2}n}}}\ell-1\right)\right]\to\infty
\end{align*}
as $\ell$ goes to $\infty$. This completes the proof of the theorem.
\section{Attainability for transported extremal problem}\label{section5}
In order to prove Theorem~\ref{thm-extremal} we will first deal with the attainability of the Trudinger-Moser-type supremum $\mathcal{MT}(n,\alpha,\beta)$ in \eqref{PI-ub} for the critical case $\alpha=\alpha_{n,\beta}$ and $w(r)=w_{\beta}(r)=(\ln \frac{1}{r})^{\frac{\beta n}{2}}$. The main difficulty here is that the Moser type functional on $X^{1,k+1}_{1,w_{\beta}}$ given by
\begin{equation}\label{Moser-functional}
    J_{n,\beta}(v)=\int_{0}^{1}r^{n-1}\mathrm{e}^{\alpha_{n,\beta}|v|^{\gamma_{n,\beta}}}\,\mathrm{d}r,\quad v\in X^{1,k+1}_{1,w_{\beta}}
\end{equation}
is subject to the loss of compactness arising from critical growth. In fact, for the optimal sequence $(v_\ell)\subset X^{1,k+1}_{1,w_{\beta}} $ defined in \eqref{velloptimalMoser}, we observe that $v_\ell\rightharpoonup 0$ weakly in $X^{1,k+1}_{1,w_{\beta}}$ and, by the item (ii) of Corollary \ref{thm0}, we have $v_\ell\to0$ in $L^{k+1}_{n-1}(0,1)$. However, $J_{n,\beta}(v_{\ell})\not\to J_{n,\beta}(0)$, as $J_{n,\beta}(v_\ell)>(2-\mathrm{e}^{-\ell})/n>1/n=J_{n,\beta}(0)$ for $\ell\geq1$.
To address this difficulty, we establish a Lions-type result (see Lemma~\ref{Lions-improvement} below) which will be combined with both the monotonic property and the upper bound estimate for the concentration levels of the Moser-type functional \eqref{Moser-functional}. Indeed, following \cite{zbMATH07931768,zbMATH07238535} we say that $(v_j)\subset X^{1,k+1}_{1,w_{\beta}}$ is a normalized concentrating sequence at origin, NCS for short, if for each $r_0\in(0,1)$ we have
\begin{equation*}
\|v_j\|_{w_{\beta}}=1,\;\; v_j\rightharpoonup 0\;\;\mbox{weakly in}\;\;X^{1,k+1}_{1,w_{\beta}} \;\;\mbox{and}\;\; \lim_{j\rightarrow \infty}\int_{r_0}^{1}r^{n-k}w_\beta(r)|v^{\prime}_j|^{k+1}\mathrm{d}r=0.
\end{equation*}
The concentration level $J^{\delta}_{n,\beta}(0)$ of the Moser-type functional $J_{n,\beta}$ on the set of all NCS is defined by 
\begin{equation*}
J^{\delta}_{n,\beta}(0)=\sup\left\{\limsup_{j\rightarrow\infty} J_{n,\beta}(v_j)\;:\; (v_j)\;\;\mbox{is NCS}  \right\}.
\end{equation*}
From \cite[Lemma~3]{zbMATH07931768}, we know the following estimate for the concentration level $J^{\delta}_{n,0}(0)$
\begin{equation}\label{b-c} 
J^{\delta}_{n,0}(0) \leq \frac{1}{n}\Big(1+\mathrm{e}^{\Psi(\frac{n}{2}+1)+\gamma}\Big)
\end{equation}
where $\Psi(x)={\Gamma}^{\prime}(x)/\Gamma(x)$ with $\Gamma(x)=\int_{0}^{1}(-\ln t)^{x-1}\,\mathrm{d}t$, $x>0$ is the digamma function and $\gamma=-\Psi(1)$ is the Euler-Mascheroni constant.

On the other hand, by arguing as in \cite{zbMATH07238535,zbMATH07931768}, we are able to obtain the following strict estimate:
\begin{lemma}\label{S-classic} It holds that
    \begin{equation*}
        \mathcal{MT}(n,\alpha_{n,0},0)>\frac{1}{n}\Big(1+\mathrm{e}^{\Psi(\frac{n}{2}+1)+\gamma}\Big).
    \end{equation*}
    \end{lemma}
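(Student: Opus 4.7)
The goal is to exhibit a test function $\phi\in X^{1,k+1}_{1,w_0}$ (with $w_0\equiv1$, i.e., $\beta=0$) such that $\|\phi\|_{w_0}\le 1$ and
\begin{equation*}
J_{n,0}(\phi)>\frac{1}{n}\left(1+\mathrm{e}^{\Psi(n/2+1)+\gamma}\right).
\end{equation*}
This is the exact analogue in our critical $k$-Hessian setting of the classical Carleson--Chang strict inequality for the two-dimensional Moser--Trudinger problem on the disk, and the plan closely follows the scheme already used in the $k$-Hessian framework by de Oliveira--do \'O--Ruf \cite{zbMATH07238535}, refined by the presence of the digamma constant.

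A first observation is that the Moser-type optimal sequence $v_\ell$ given in \eqref{velloptimalMoser} at $\beta=0$ (which satisfies $\|v_\ell\|_{w_0}=1$ and is an NCS) is by itself insufficient: a Laplace-method computation, based on the substitutions $t=n\ln(1/r)$ and then $s=t/\ell$ which reduce the tail integral to $\frac{\ell}{n}\int_0^1\mathrm{e}^{\ell(s^{\gamma_{n,0}}-s)}\,ds$, yields $\lim_\ell J_{n,0}(v_\ell)=\frac{2}{n}+\frac{1}{2}$. Using the identity $\Psi(n/2+1)+\gamma=\sum_{j=1}^{n/2}1/j$ (valid since $n$ is even in our setting $k=n/2$), this limit falls strictly short of the target in every admissible dimension. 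A non-concentrating or suitably perturbed test function is therefore required.

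The construction I would employ is a Carleson--Chang-type modification of the Moser profile: set $\phi_{\ell,\eta}(r)=v_\ell(r)+\eta\,\psi(r)$, where $\psi$ is a fixed nonnegative smooth function compactly supported in the logarithmic tail region $(\mathrm{e}^{-\ell/n},1)$, chosen so that $\mathrm{sgn}(\psi)$ matches that of $v_\ell$, and $\eta=\eta(\ell)>0$ is a small tuning parameter. After replacing $\phi_{\ell,\eta}$ by $\phi_{\ell,\eta}/\|\phi_{\ell,\eta}\|_{w_0}$ so as to satisfy the constraint, $J_{n,0}$ is expanded in three pieces: (a) the core contribution from $[0,\mathrm{e}^{-\ell/n}]$, still equal to $1/n$; (b) the unperturbed tail, via the Laplace expansion above; and (c) the positive first-order perturbative gain, of order $\eta$, balanced against the $(k+1)$-th power norm correction needed to enforce $\|\phi_{\ell,\eta}\|_{w_0}\le 1$.

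The main obstacle is step (c): one has to calibrate the cut-off $\psi$ and the size of $\eta=\eta(\ell)$ so that the linear gain in $J_{n,0}$, after renormalization, strictly exceeds the shortfall $\frac{1}{n}\mathrm{e}^{\Psi(n/2+1)+\gamma}-\frac{1}{n}-\frac{1}{2}$ between the target value and the bare Moser limit. This requires a refined Laplace expansion at the endpoint $s=1$—where the incomplete gamma function, and thereby the digamma term, enters via an asymptotic of the form $\int_0^\ell t^{n/2-1}\mathrm{e}^{-t}\,\mathrm{d}t$—together with a careful accounting of the $(k+1)$-norm correction (which is $O(\eta^{k+1})$ for $k>1$ and $O(\eta)$ with a small constant for $k=1$). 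Once these sharp expansions are in place, an appropriate choice $\eta(\ell)=O(\ell^{-a})$ for a suitable exponent $a>0$ produces the desired strict inequality.
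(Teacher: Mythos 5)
The paper does not give a self-contained argument here: its ``proof'' is a one-line citation to Lemma~6 and Equation~(50) of de Oliveira--do~\'O--Ubilla (\cite{zbMATH07931768}), where the strict inequality was established by an explicit Carleson--Chang-type test-function computation. Your proposal attempts a re-derivation of that result. The preliminary observations are correct: the Laplace computation of the Moser-sequence limit, $\lim_\ell J_{n,0}(v_\ell) = \frac{2}{n}+\frac12$, is right (near $s=1$, $s^{\gamma}-s\approx -(\gamma-1)u$ with $\gamma-1=2/n$, giving the $\frac12$), and the identity $\Psi(\tfrac{n}{2}+1)+\gamma = H_{n/2}$ is correct since $n/2=k$ is an integer. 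You also correctly note that this limit falls strictly short of the target $\tfrac1n(1+\mathrm e^{H_{n/2}})$, since $\mathrm e^{H_m}>m+1$ for every $m\ge1$.

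However, the central step is missing. You describe ``step (c)'' --- calibrating the cut-off $\psi$ and the tuning parameter $\eta(\ell)$ so that the perturbed and renormalized functional exceeds the target --- but you never carry it out; the proposal ends precisely where the actual work begins. Worse, the framing is misleading: the gap to be closed, $\tfrac1n\bigl(\mathrm e^{H_{n/2}}-1-\tfrac n2\bigr)$, is a fixed positive number independent of $\ell$ (for instance $\tfrac{e-2}{2}\approx 0.36$ when $n=2$), not an infinitesimal. A ``first-order perturbative gain of order $\eta$'' with $\eta=\eta(\ell)\to0$ cannot, on its face, supply a macroscopic $O(1)$ gain in $J_{n,0}$ once you pay the normalization cost $\|\phi_{\ell,\eta}\|_{w_0}\le 1$; the exponential sensitivity of the integrand has to be exploited much more carefully than your outline suggests. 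This is exactly why the Carleson--Chang argument (and its $k$-Hessian counterpart in \cite{zbMATH07931768}) requires a specifically designed test profile and several pages of sharp asymptotics, rather than a small linear perturbation of the Moser functions. As written, the proposal identifies the right target and the right ingredients (Laplace method, incomplete-gamma asymptotics, digamma constant), but it is a plan for a proof, not a proof; the decisive estimate is asserted, not established.
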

    \begin{proof}
  Indeed, in the proof of Lemma~6 in \cite{zbMATH07931768}, this inequality arises as part of the strict estimate \cite[Equation (50)]{zbMATH07931768} established therein.
    \end{proof}
    To analyze the behavior of both $\mathcal{MT}(n,\alpha,\beta)$ and $J^{\delta}_{n,\beta}(0)$ with respect to $\beta$, we introduce a clever change of variables, as suggested in \cite{zbMATH07122708}. For $v\in X^{1,k+1}_{1,w_{\beta}}$ with $\|v\|_{w_{\beta}}\leq 1$ and for $0\leq \tilde{\beta}<\beta$, we denote
    \begin{equation}\label{changeV1}
        z(r)=\left(\frac{\alpha_{n,\beta}}{\alpha_{n,\tilde{\beta}}}\right)^{\frac{n(1-\tilde{\beta})}{n+2}}v(r)|v(r)|^{\frac{\beta-\tilde{\beta}}{1-\beta}}.
    \end{equation}
    \begin{lemma}\label{C-change}
       If $v\in X^{1,k+1}_{1,w_{\beta}}$ and $z$ is defined by \eqref{changeV1}, then $\|z\|^{\frac{1}{1-\tilde{\beta}}}_{w_{\tilde{\beta}}}\le\|v\|^{\frac{1}{1-\beta}}_{w_{\beta}}.$
    \end{lemma}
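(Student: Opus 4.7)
The plan is to transplant the statement to the coordinates used in the proof of Proposition~\ref{prop-thm1}. Set $t=n\ln(1/r)$ and, for $\gamma\in\{\beta,\tilde\beta\}$, define
\[
\psi_\gamma(t):=c_n^{2/(n+2)}\,n^{n(1-\gamma)/(n+2)}\,(1-\gamma)^{n/(n+2)}\,v_\gamma(r),
\]
with $v_\beta=v$ and $v_{\tilde\beta}=z$. A routine calculation, using the explicit formula for $\alpha_{n,\gamma}$, shows that the precise constant $(\alpha_{n,\beta}/\alpha_{n,\tilde\beta})^{n(1-\tilde\beta)/(n+2)}$ appearing in \eqref{changeV1} is exactly what forces the clean algebraic identity $\psi_{\tilde\beta}(t)=\psi_\beta(t)\,|\psi_\beta(t)|^{\sigma}$, where $\sigma:=(\beta-\tilde\beta)/(1-\beta)\ge 0$. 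Differentiation yields $\psi_{\tilde\beta}'=(1+\sigma)|\psi_\beta|^{\sigma}\psi_\beta'$ with $1+\sigma=(1-\tilde\beta)/(1-\beta)$, and \eqref{portal-norm} recasts both sides of the desired inequality as weighted $L^{k+1}$ integrals in~$t$.

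The analytic core is the H\"older estimate underlying \eqref{Holder-radial}: splitting $|\psi_\beta'(s)|=(s^{\beta n/2}|\psi_\beta'|^{k+1})^{1/(k+1)}\,s^{-\beta n/(2(k+1))}$ and integrating from $0$ to $t$, using $\psi_\beta(0)=0$ (from $v(1)=0$), gives
\[
|\psi_\beta(t)|^{k+1}\leq t^{(1-\beta)n/2}\,I(t),\qquad I(t):=\int_0^t s^{\beta n/2}|\psi_\beta'(s)|^{k+1}\,\mathrm{d}s.
\]
Raising to the $\sigma$ power produces $|\psi_\beta(t)|^{\sigma(k+1)}\le t^{(\beta-\tilde\beta)n/2}I(t)^{\sigma}$. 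Substituting into
\[
\int_0^\infty t^{\tilde\beta n/2}|\psi_{\tilde\beta}'|^{k+1}\,\mathrm{d}t=(1+\sigma)^{k+1}\int_0^\infty t^{\tilde\beta n/2}|\psi_\beta|^{\sigma(k+1)}|\psi_\beta'|^{k+1}\,\mathrm{d}t,
\]
and using $t^{\tilde\beta n/2}\cdot t^{(\beta-\tilde\beta)n/2}=t^{\beta n/2}$, reduces the integrand to $(1+\sigma)^{k+1}\,I(t)^{\sigma}I'(t)$, an exact derivative.

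Integrating, and using $I(0)=0$ together with $I(\infty)=(1-\beta)^{n/2}\|v\|_{w_\beta}^{k+1}$, yields $\int_0^\infty t^{\tilde\beta n/2}|\psi_{\tilde\beta}'|^{k+1}\,\mathrm{d}t\le(1+\sigma)^{k}I(\infty)^{1+\sigma}$. Bookkeeping on the exponents, using $(1+\sigma)(k+1)=(k+1)(1-\tilde\beta)/(1-\beta)$ and then raising to the power $1/((k+1)(1-\tilde\beta))$, gives
\[
\|z\|_{w_{\tilde\beta}}^{1/(1-\tilde\beta)}\le(1-\beta)^{n(\beta-\tilde\beta)/((n+2)(1-\beta)(1-\tilde\beta))}\,\|v\|_{w_\beta}^{1/(1-\beta)}\le\|v\|_{w_\beta}^{1/(1-\beta)},
\]
since $1-\beta\in(0,1)$ and the displayed exponent is non-negative.

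The main obstacle I expect is resisting the temptation to apply the coarser pointwise bound $|\psi_\beta(t)|\le t^{1/\gamma_{n,\beta}}\|v\|_{w_\beta}$ from \eqref{psi-radial}, which squanders the full norm and introduces a stray factor $(1-\tilde\beta)/(1-\beta)>1$ that breaks the estimate. Keeping the \emph{truncated} integral $I(t)$ is the crucial refinement: it makes the transformed integrand an antiderivative of $I^{1+\sigma}/(1+\sigma)$, an algebraic coincidence that lets all the exponents on $(1-\beta)$ and $\|v\|_{w_\beta}$ reassemble into the sharp $1/(1-\beta)$ on the right, with a factor $\le 1$ to spare.
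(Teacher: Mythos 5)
Your argument is, in substance, the paper's own proof transported to the Moser variable $t=n\ln\frac1r$: the paper works directly in $r$, using Lemma~\ref{r-estimate} (the H\"older bound with the truncated tail integral $\int_r^1 s^{n-k}|v'|^{k+1}w_\beta\,\mathrm{d}s$) and then observing that the integrand for $\|z\|_{w_{\tilde\beta}}^{k+1}$ is an exact derivative of that truncated integral raised to the power $\frac{1-\tilde\beta}{1-\beta}$; your $I(t)$ and the identity $t^{\tilde\beta n/2}|\psi_\beta|^{\sigma(k+1)}|\psi_\beta'|^{k+1}\lesssim I^\sigma I'$ are exactly this mechanism after \eqref{changeV}, and your identification $\psi_{\tilde\beta}=\psi_\beta|\psi_\beta|^\sigma$ is correct (the coefficient in \eqref{changeV} equals $\alpha_{n,\gamma}^{n(1-\gamma)/(n+2)}$, which makes the constant in \eqref{changeV1} do precisely this).

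There is, however, a constant-tracking slip in the key display. With your splitting, H\"older gives $|\psi_\beta(t)|^{k+1}\le (1-\beta)^{-n/2}\,t^{(1-\beta)n/2}I(t)$, not $|\psi_\beta(t)|^{k+1}\le t^{(1-\beta)n/2}I(t)$; the latter is genuinely false for $\beta>0$ (take $\psi_\beta'$ constant on $(0,t)$: the left side equals $(1+\beta k)$ times your right side). The slip is harmless for the lemma but not for your closing remark: carrying the correct factor $(1-\beta)^{-\sigma k}$ through, and using $I(\infty)=(1-\beta)^{n/2}\|v\|_{w_\beta}^{k+1}$ together with $(1+\sigma)^k(1-\beta)^k=(1-\tilde\beta)^k$, all powers of $(1-\beta)$ and $(1-\tilde\beta)$ cancel exactly and one lands on $\|z\|_{w_{\tilde\beta}}^{k+1}\le\|v\|_{w_\beta}^{(k+1)\frac{1-\tilde\beta}{1-\beta}}$, i.e.\ the lemma with no room to spare. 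The advertised extra factor $(1-\beta)^{n(\beta-\tilde\beta)/((n+2)(1-\beta)(1-\tilde\beta))}$ in your final display is an artifact of the dropped constant and should be deleted; with that correction the proof is complete and coincides with the paper's argument up to the change of variables.
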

    \begin{proof}
        From Lemma~\ref{r-estimate} with $t=1$ and $w(r)=w_{\beta}(r)=(\ln \frac{1}{r})^{\frac{\beta n}{2}}$, we have
        \begin{equation*}
\begin{aligned}
 |v(r)| &\leq \left(\frac{1}{c^{\frac{2}{n}}_n}\int_{r}^{1}\frac{1}{s}(-\ln s)^{-\beta}\,\mathrm{d}s\right)^{\frac{n}{n+2}}\left(c_n\int_{r}^{1}s^{n-k}|v^{\prime}(s)|^{k+1}w_{\beta}(s)\,\mathrm{d}s\right)^{\frac{2}{n+2}}\\
&=\left(\frac{n}{\alpha_{n,\beta}}\right)^{\frac{n(1-\beta)}{n+2}}w^{\frac{2(1-\beta)}{\beta(n+2)}}_{\beta}(r)\left(c_n\int_{r}^{1}s^{n-k}|v^{\prime}(s)|^{k+1}w_{\beta}(s)\,\mathrm{d}s\right)^{\frac{2}{n+2}}
\end{aligned}
\end{equation*}
for any $r\in (0,1)$. Thus,

\begin{align*}
    |z^{\prime}(r)|^{k+1}&=\left|\left(\frac{\alpha_{n,\beta}}{\alpha_{n,\tilde{\beta}}}\right)^{\frac{n(1-\tilde{\beta})}{n+2}}\frac{1-\tilde{\beta}}{1-\beta}|v(r)|^{\frac{\beta-\tilde{\beta}}{1-\beta}}v^{\prime}(r)\right|^{\frac{n+2}{2}}\\
    &=\left(\frac{\alpha_{n,\beta}}{\alpha_{n,\tilde{\beta}}}\right)^{\frac{n(1-\tilde{\beta})}{2}}\left(\frac{1-\tilde{\beta}}{1-\beta}\right)^{\frac{n+2}{2}}|v^{\prime}(r)|^{k+1}|v(r)|^{\frac{n+2}{2}\frac{\beta-\tilde{\beta}}{1-\beta}}\\
    &\leq \left(\frac{\alpha_{n,\beta}}{\alpha_{n,\tilde{\beta}}}\right)^{\frac{n(1-\tilde{\beta})}{2}}\left(\frac{1-\tilde{\beta}}{1-\beta}\right)^{\frac{n+2}{2}}\left(\left(\frac{n}{\alpha_{n,\beta}}\right)^{\frac{n(1-\beta)}{n+2}}w^{\frac{2(1-\beta)}{\beta(n+2)}}_{\beta}(r)\right)^{\frac{n+2}{2}\frac{\beta-\tilde{\beta}}{1-\beta}}\\
    &\quad\times \left(c_n\int_{r}^{1}s^{n-k}|v^{\prime}(s)|^{k+1}w_{\beta}(s)\,\mathrm{d}s\right)^{\frac{\beta-\tilde{\beta}}{1-\beta}}|v^{\prime}(r)|^{k+1}\\
    &=\left(\frac{1-\tilde{\beta}}{1-\beta}\right)|v^{\prime}(r)|^{k+1}\frac{w_{\beta}(r)}{w_{\tilde{\beta}}(r)}\left(c_n\int_{r}^{1}s^{n-k}|v^{\prime}(s)|^{k+1}w_{\beta}(s)\,\mathrm{d}s\right)^{\frac{\beta-\tilde{\beta}}{1-\beta}}.
\end{align*}    
Consequently,
\begin{equation}\nonumber
    \begin{aligned}
\|z\|^{k+1}_{w_{\tilde{\beta}}}&=c_{n}\int_{0}^{1}r^{n-k}|z^{\prime}(r)|^{k+1}w_{\tilde{\beta}}(r)\,\mathrm{d} r\\
&\leq \left(\frac{1-\tilde{\beta}}{1-\beta}\right)c_n\int_{0}^{1}r^{n-k}|v^{\prime}(r)|^{k+1}w_{\beta}(r)\left(c_n\int_{r}^{1}s^{n-k}|v^{\prime}(s)|^{k+1}w_{\beta}(s)\,\mathrm{d}s\right)^{\frac{\beta-\tilde{\beta}}{1-\beta}}\,\mathrm{d} r\\
&=-c^{\frac{1-\tilde{\beta}}{1-\beta}}_n\int_{0}^{1}\frac{d}{\,\mathrm{d} r}\left(\left(\int_{r}^{1}s^{n-k}|v^{\prime}(s)|^{k+1}w_{\beta}(s)\,\mathrm{d}s\right)^{\frac{1-\tilde{\beta}}{1-\beta}}\right)\,\mathrm{d} r\\
&=c^{\frac{1-\tilde{\beta}}{1-\beta}}_n\left(\int_{0}^{1}r^{n-k}|v^{\prime}(r)|^{k+1}w_{\beta}(r)\,\mathrm{d} r\right)^{\frac{1-\tilde{\beta}}{1-\beta}}\\
&=\|v\|^{(k+1)\frac{1-\tilde{\beta}}{1-\beta}}_{w_{\beta}}.
    \end{aligned}
\end{equation}
    \end{proof}   
    \begin{lemma}\label{monotonicityMT} The function $\beta\mapsto \mathcal{MT}(n,\alpha_{n,\beta},\beta)$ is decreasing on $[0,1)$.
\end{lemma}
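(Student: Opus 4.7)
The plan is to exploit the change of variables introduced in \eqref{changeV1} together with the norm estimate from Lemma~\ref{C-change}, which together reduce the monotonicity to a pointwise identity between exponents. Fix $0\leq\tilde\beta<\beta<1$ and choose any $v\in X^{1,k+1}_{1,w_\beta}$ with $\|v\|_{w_\beta}\leq 1$. Define $z$ by \eqref{changeV1}. By Lemma~\ref{C-change}, $\|z\|_{w_{\tilde\beta}}\leq\|v\|_{w_\beta}^{(1-\tilde\beta)/(1-\beta)}\leq 1$, so $z$ lies in the admissible set for the extremal problem at level $\tilde\beta$.

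The heart of the argument is the pointwise identity $\alpha_{n,\tilde\beta}|z(r)|^{\gamma_{n,\tilde\beta}}=\alpha_{n,\beta}|v(r)|^{\gamma_{n,\beta}}$. To verify it, observe first that the exponent of $|v|$ in $|z|^{\gamma_{n,\tilde\beta}}$ equals
\[
\gamma_{n,\tilde\beta}\cdot\frac{1-\tilde\beta}{1-\beta}=\frac{n+2}{n(1-\tilde\beta)}\cdot\frac{1-\tilde\beta}{1-\beta}=\frac{n+2}{n(1-\beta)}=\gamma_{n,\beta}.
\]
Second, the constant prefactor $(\alpha_{n,\beta}/\alpha_{n,\tilde\beta})^{n(1-\tilde\beta)/(n+2)}$ raised to the $\gamma_{n,\tilde\beta}$-th power collapses to $\alpha_{n,\beta}/\alpha_{n,\tilde\beta}$, since $n(1-\tilde\beta)\gamma_{n,\tilde\beta}/(n+2)=1$. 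These two computations together yield the claimed pointwise identity.

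Combining these observations, I would conclude
\[
\int_0^1 r^{n-1}\mathrm{e}^{\alpha_{n,\beta}|v|^{\gamma_{n,\beta}}}\,\mathrm{d}r=\int_0^1 r^{n-1}\mathrm{e}^{\alpha_{n,\tilde\beta}|z|^{\gamma_{n,\tilde\beta}}}\,\mathrm{d}r\leq \mathcal{MT}(n,\alpha_{n,\tilde\beta},\tilde\beta),
\]
and taking the supremum over admissible $v$ produces $\mathcal{MT}(n,\alpha_{n,\beta},\beta)\leq \mathcal{MT}(n,\alpha_{n,\tilde\beta},\tilde\beta)$, which is the desired monotonicity.

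The proof is essentially a bookkeeping exercise once the change of variables is in place, so there is no serious obstacle; the cleverness lies entirely in the choice of transformation \eqref{changeV1}, which is tailored precisely so that the Moser-type exponent $\alpha|v|^{\gamma}$ is preserved while the control of the Sobolev norm degrades from $w_\beta$ to $w_{\tilde\beta}$. If strict decrease is required, one would additionally trace through the H\"older step in the proof of Lemma~\ref{C-change} and argue that the equality case is excluded for nontrivial $v$ whenever $\tilde\beta<\beta$; this is a mild but delicate refinement, and it is the only point where extra care is needed.
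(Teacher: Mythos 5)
Your proposal is correct and follows essentially the same route as the paper: the change of variables \eqref{changeV1} combined with Lemma~\ref{C-change} places $z$ in the admissible set at level $\tilde\beta$, and the pointwise identity $\alpha_{n,\tilde\beta}|z|^{\gamma_{n,\tilde\beta}}=\alpha_{n,\beta}|v|^{\gamma_{n,\beta}}$ (which the paper uses implicitly and you verify explicitly) gives the comparison of the suprema. Note that, as in the paper, the argument only yields the non-strict inequality $\mathcal{MT}(n,\alpha_{n,\beta},\beta)\leq\mathcal{MT}(n,\alpha_{n,\tilde\beta},\tilde\beta)$, which is all that is used later, so your closing caveat about strictness is consistent with what the paper actually proves.
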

This completes the proof of the lemma.
\begin{proof}
For $0\le\tilde{\beta}<\beta$ and $v\in X^{1,k+1}_{1,w_{\beta}}$ with $\|v\|_{w_{\beta}}\leq 1$, let $z$ be the function given by \eqref{changeV1}. From Lemma~\ref{C-change} we have $\|z\|_{w_{\tilde{\beta
}}}\leq 1$. In addition, 
\begin{equation}\nonumber
\int_{0}^{1}r^{n-1}\mathrm{e}^{\alpha_{n,\beta}|v|^{\gamma_{n,\beta}}}\,\mathrm{d} r=\int_{0}^{1}r^{n-1}\mathrm{e}^{\alpha_{n,\tilde{\beta}}|z|^{\gamma_{n,\tilde{\beta}}}}\,\mathrm{d} r\leq  \mathcal{MT}(n,\alpha_{n,\tilde{\beta}},\tilde{\beta}).
\end{equation}
By the arbitrariness of the choice of $v\in X^{1,k+1}_{1,w_{\beta}}$ with $\|v\|_{w_{\beta}}\leq 1$, it follows that
\begin{equation*}
\mathcal{MT}(n,\alpha_{n,\beta},\beta)\leq \mathcal{MT}(n,\alpha_{n,\tilde{\beta}},\tilde{\beta}). 
\end{equation*}
This completes the proof of the lemma.
\end{proof}
\begin{lemma}\label{Lions-improvement} Let $(v_j)$ be a sequence in $X^{1,k+1}_{1,w_{\beta}}$ such that $\|v_j\|_{w_{\beta}}=1$ and $v_j\rightharpoonup v_0$ weakly in $ X^{1,k+1}_{1,w_{\beta}}$. Then
\begin{equation}\label{D-improvement}
    \limsup_{j\to\infty} \int_{0}^{1}r^{n-1}\mathrm{e}^{p\alpha_{n,\beta}|v_j|^{\gamma_{n,\beta}}}\,\mathrm{d}r<\infty
\end{equation}
for any $0<p<P_{v_0}:=(1-\|v_0\|^{k+1}_{w_{\beta}})^{-\frac{2}{n(1-\beta)}}$.
\end{lemma}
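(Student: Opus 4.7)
\textbf{Plan for Lemma~\ref{Lions-improvement}.} The approach is the classical P.-L.~Lions improvement of the Trudinger--Moser inequality under weak convergence, adapted to the weighted space $X^{1,k+1}_{1,w_\beta}$. Set $\tau := 1 - \|v_0\|_{w_\beta}^{k+1}$, so $P_{v_0} = \tau^{-2/(n(1-\beta))}$. After passing to a subsequence, the compactness in Corollary~\ref{thm0} gives $v_j \to v_0$ a.e., and the Brezis--Lieb lemma applied in the weighted $L^{k+1}$-space with measure $c_n r^{n-k} w_\beta(r)\,\mathrm{d} r$ yields
\begin{equation*}
\|v_j - v_0\|_{w_\beta}^{k+1} \longrightarrow \tau, \qquad \|v_j - v_0\|_{w_\beta}^{\gamma_{n,\beta}} \longrightarrow \tau^{\frac{\gamma_{n,\beta}}{k+1}} = \tau^{\frac{2}{n(1-\beta)}} = P_{v_0}^{-1},
\end{equation*}
where I use the crucial identity $\gamma_{n,\beta}/(k+1) = 2/(n(1-\beta))$, coming from $k+1 = (n+2)/2$.

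Given $\varepsilon > 0$, the elementary convexity bound (valid since $\gamma_{n,\beta} > 1$)
\begin{equation*}
|a + b|^{\gamma_{n,\beta}} \le (1+\varepsilon)|a|^{\gamma_{n,\beta}} + C_\varepsilon\, |b|^{\gamma_{n,\beta}},
\end{equation*}
applied to $a = v_j - v_0$, $b = v_0$, followed by H\"older's inequality with conjugate exponents $q, q' > 1$, reduces matters to controlling
\begin{equation*}
I_j = \int_0^1 r^{n-1} \mathrm{e}^{q p (1+\varepsilon)\alpha_{n,\beta} |v_j - v_0|^{\gamma_{n,\beta}}} \,\mathrm{d} r \quad \text{and} \quad J = \int_0^1 r^{n-1} \mathrm{e}^{q' p C_\varepsilon \alpha_{n,\beta} |v_0|^{\gamma_{n,\beta}}} \,\mathrm{d} r.
\end{equation*}
Since $v_0$ is a fixed element of $X^{1,k+1}_{1,w_\beta}$, Proposition~\ref{prop-thm1}(a) immediately gives $J < \infty$ for every fixed $q'$.

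For $I_j$, normalize the exponent as $\lambda_j\, \alpha_{n,\beta}\, \bigl|(v_j - v_0)/\|v_j - v_0\|_{w_\beta}\bigr|^{\gamma_{n,\beta}}$ with
\begin{equation*}
\lambda_j = q p (1+\varepsilon)\, \|v_j - v_0\|_{w_\beta}^{\gamma_{n,\beta}} \longrightarrow \frac{q p (1+\varepsilon)}{P_{v_0}}.
\end{equation*}
The strict inequality $p < P_{v_0}$ lets me first choose $\varepsilon > 0$ small, then $q > 1$ close enough to $1$, so that $q p (1+\varepsilon) < P_{v_0}$; consequently $\lambda_j \le 1$ for $j$ large, and the sharp bound in Proposition~\ref{prop-thm1}(b) forces $I_j \le \mathrm{MT}(n,\alpha_{n,\beta},\beta) < \infty$ uniformly in $j$. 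Combining the two factors yields~\eqref{D-improvement}.

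The main obstacle is the three-parameter balancing between $\varepsilon$, $q$, and $j$, which must not push $\lambda_j$ past the critical threshold $1$; the gap $P_{v_0} - p > 0$ supplied by the hypothesis is precisely what creates the slack needed to absorb both the convexity loss $(1+\varepsilon)$ and the H\"older loss $q$. A secondary technical point is the Brezis--Lieb splitting of the norm, which requires pointwise a.e.\ convergence of the derivatives $v_j^{\prime}$; in the one-dimensional radial setting this is arranged by extracting a further subsequence using the compact embedding of Corollary~\ref{thm0} together with standard diagonalization on an exhausting sequence of compact subintervals of $(0,1)$.
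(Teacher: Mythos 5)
Your overall strategy (extract the concentrating part, hit it with the sharp Trudinger--Moser bound after a convexity split, control the remainder by a uniform bound for a fixed function) is the standard Lions blueprint, and the paper does something similar in spirit. However, your proof has a genuine gap at the Brezis--Lieb step that the paper's proof carefully avoids, and your own attempt to patch it does not work.

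The issue: you need $\limsup_j \|v_j-v_0\|_{w_\beta}^{k+1} \le 1-\|v_0\|_{w_\beta}^{k+1}$, and you propose to obtain it from the Brezis--Lieb lemma. Since $\|\cdot\|_{w_\beta}$ is the $L^{k+1}$-norm of the \emph{derivative} against the measure $c_n r^{n-k}w_\beta\,\mathrm{d} r$, Brezis--Lieb requires that $v_j'\to v_0'$ a.e.\ on $(0,1)$. This is not available. The compact embedding of Corollary~\ref{thm0} yields strong convergence, hence a.e.\ convergence, of the \emph{functions} $v_j$, but it says nothing pointwise about $v_j'$; weak convergence $v_j'\rightharpoonup v_0'$ in $L^{k+1}$ is perfectly compatible with derivatives that oscillate and converge nowhere (think of $v_j(r)=\epsilon_j\sin(r/\epsilon_j)\phi(r)$ with $\epsilon_j\to 0$). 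Without a.e.\ derivative convergence, the sharpest elementary substitute is the second-order convexity inequality for $|\cdot|^{k+1}$, which gives only $\limsup\|v_j-v_0\|_{w_\beta}^{k+1}\le 2^{k-1}\bigl(1-\|v_0\|_{w_\beta}^{k+1}\bigr)$ for $k\ge 1$, and the extra factor $2^{k-1}$ destroys the claimed threshold $P_{v_0}$.

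The paper sidesteps this entirely by decomposing $v_j$ not as $(v_j-v_0)+v_0$, but as $T_L v_j + T^L v_j$ with $T^Lv_j=\min\{|v_j|,L\}\operatorname{sign}(v_j)$. The key point is that $(T^Lv_j)'$ and $(T_Lv_j)'$ have disjoint supports, so the identity $\|v_j\|_{w_\beta}^{k+1}=\|T^Lv_j\|_{w_\beta}^{k+1}+\|T_Lv_j\|_{w_\beta}^{k+1}$ holds \emph{exactly}, with no Brezis--Lieb or a.e.\ gradient convergence required. One then uses only weak lower semicontinuity of the norm on the truncated part (which \emph{is} available, since $T^Lv_j\rightharpoonup T^Lv_0$ follows from boundedness together with a.e.\ convergence of $v_j$) to deduce $\limsup_j\|T_Lv_j\|_{w_\beta}^{k+1}\le 1-\|T^Lv_0\|_{w_\beta}^{k+1}$, and finally sends $L\to\infty$ so that $\|T^Lv_0\|_{w_\beta}\to\|v_0\|_{w_\beta}$. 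The convexity and normalization steps that follow are close in spirit to yours, with $T_Lv_j$ playing the role of $v_j-v_0$ and the bounded piece $T^Lv_j$ (with $|T^Lv_j|\le L$) absorbing what your $v_0$-piece absorbs. If you want to keep the subtraction route, you must \emph{assume} or \emph{prove} a.e.\ convergence of $v_j'$; otherwise replace it by the truncation argument.
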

\begin{proof}
Taking \eqref{PI-ub} into account, we may assume $v_0\not\equiv 0$. In addition, as a consequence of the item (ii) of Corollary~\ref{thm0} we have the compact embedding 
\begin{equation*}
 X^{1,k+1}_{1,w_{\beta}}\hookrightarrow L^q_{n-1},\mbox{ for all }1\leq q<\infty.  
\end{equation*} 
Thus, up to a subsequence, we can suppose that $v_j\to v_0$ in $L^{q}_{n-1}$ and $v_j(r)\to v_0(r)$ a.e in $(0,1)$. Now, given $L>0$, let us define the truncations given by
\begin{equation*}
T^{L}v_j=\min\left\{|v_j|,L\right\}\mbox{sign}(v_j)\quad\mbox{and}\quad
T_{L}{v_j}= v_j-T^{L}v_j,\;\;\; j=0,1,2\cdots.
\end{equation*}
Note that 
\begin{equation}\label{I1}
\|v_j\|^{k+1}_{w_{\beta}}=\|T^{L}v_j\|^{k+1}_{w_{\beta}}+\|T_{L}v_j\|^{k+1}_{w_{\beta}}, \;\;\; j=0,1,2, \cdots .
\end{equation}
In particular, $\|T^{L}v_0\|^{k+1}_{w_{\beta}}\to \|v_0\|^{k+1}_{w_{\beta}}$, as $L\to\infty$. So, for any $p<P_{v_0}$, we can choose \begin{equation*}p_0:=p(1-\|T^{L}v_0\|^{k+1}_{w_{\beta}})^{\frac{2}{n(1-\beta)}}<1,\end{equation*} for $L$ large enough. Since $
T^{L}v_j(r)\rightarrow T^{L}v_0(r)$ a.e in $(0,1)$ and, from \eqref{I1}, $(T^{L}v_j)$ is bounded in $X^{1,k+1}_{1,w_{\beta}}$, up to a subsequence, we can assume that
$T^{L}v_j\rightharpoonup T^{L}v_0$ weakly in $X^{1,k+1}_{1,w_{\beta}}$. Then, the weak lower semi-continuity of the norm provides 
\begin{equation*}
\|T^{L}v_0\|^{k+1}_{w_{\beta}}\le\liminf_{j\to\infty} \|T^{L}v_j\|^{k+1}_{w_{\beta}}
\end{equation*}
and 
\begin{equation*}
\limsup_{j\to\infty}\|T_{L}v_j\|^{k+1}_{w_{\beta}}=1-\liminf_{j\to\infty} \|T^{L}v_j\|^{k+1}_{w_{\beta}}\leq 1-\|T^{L}v_0\|^{k+1}_{w_{\beta}}.
\end{equation*}
Then, there exists $j_0\in\mathbb{N}$ such that 
\begin{equation}\label{TL-norm}
p\|T_{L}v_j\|^{\gamma_{n,\beta}}_{w_{\beta}}\leq p\left(1-\|T^{L}v_0\|^{k+1}_{w_{\beta}}\right)^\frac{\gamma_{n,\beta}}{k+1}\leq \frac{p_0+1}{2}<1,
\end{equation}
for all $j\ge j_0$. Of course, we have $v_j=T_{L}v_j+T^{L}v_j$ and $|T^{L}v_j|\leq L$. Then, using the convexity of the function $x\mapsto x^q$, $x\ge 0$ with $q>1$ (cf. \cite[Lemma~2.1]{zbMATH07714682}), for $\epsilon >0$
		\begin{equation*}
			(x+y)^q \leq (1+\epsilon)^{\frac{q-1}{q}}x^q+(1-(1+\epsilon)^{-\frac{1}{q}})^{1-q}y^q,\;\; x, y\ge 0.    
		\end{equation*}
It follows that 
\begin{equation}\label{e-convex}
    \begin{aligned}
        |v_j|^{\gamma_{n,\beta}} &\leq (|T_{L}v_j|+|T^{L}v_j|)^{\gamma_{n,\beta}}\\
        &\leq (1+\epsilon)^{\frac{\gamma_{n,\beta}-1}{\gamma_{n,\beta}}}\|T_{L}v_j\|^{\gamma_{n,\beta}}_{w_{\beta}}\Big|\frac{T_{L}v_j}{\|T_{L}v_j\|_{w_{\beta}}}\Big|^{\gamma_{n,\beta}}+(1-(1+\epsilon)^{-\frac{1}{\gamma_{n,\beta}}})^{1-\gamma_{n,\beta}}L^{\gamma_{n,\beta}}.
    \end{aligned}
\end{equation} 
Hence, by choosing $\epsilon>0$ small such that $(1+\epsilon)^{\frac{\gamma_{n,\beta}-1}{\gamma_{n,\beta}}}\frac{p_0+1}{2}<1$, from \eqref{TL-norm} and \eqref{e-convex} we obtain
\begin{equation}\label{ee-convex}
    \begin{aligned}
        \alpha_{n,\beta}p|v_j|^{\gamma_{n,\beta}} &\leq \alpha_{n,\beta}\Big|\frac{T_{L}v_j}{\|T_{L}v_j\|_{w_{\beta}}}\Big|^{\gamma_{n,\beta}}+C(p,n,\beta, L).
    \end{aligned}
\end{equation} 
Then, \eqref{D-improvement} follows from \eqref{PI-ub} and \eqref{ee-convex}. This completes the proof of the lemma.
\end{proof}

\begin{lemma}\label{monotonicity} The function $\beta\mapsto J ^{\delta}_{n,\beta}(0)$ is decreasing on $[0,1)$.
\end{lemma}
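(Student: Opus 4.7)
The plan is to transport an arbitrary NCS in $X^{1,k+1}_{1,w_\beta}$ into a concentrating sequence in $X^{1,k+1}_{1,w_{\tilde\beta}}$ by the same change of variables \eqref{changeV1} that drove Lemma \ref{monotonicityMT}, and then feed the result into the definition of $J^\delta_{n,\tilde\beta}(0)$. Fix $0 \le \tilde\beta < \beta < 1$ and an NCS $(v_j) \subset X^{1,k+1}_{1,w_\beta}$; set
\[
z_j(r) := \Big(\tfrac{\alpha_{n,\beta}}{\alpha_{n,\tilde\beta}}\Big)^{\!\tfrac{n(1-\tilde\beta)}{n+2}} v_j(r)\,|v_j(r)|^{\tfrac{\beta-\tilde\beta}{1-\beta}}.
\]
Lemma \ref{C-change} gives $\|z_j\|_{w_{\tilde\beta}} \le 1$, while the identity $\alpha_{n,\tilde\beta}|z_j|^{\gamma_{n,\tilde\beta}} = \alpha_{n,\beta}|v_j|^{\gamma_{n,\beta}}$ used in the proof of Lemma \ref{monotonicityMT} gives $J_{n,\beta}(v_j) = J_{n,\tilde\beta}(z_j)$.

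Next I would check that $(z_j)$ inherits the two defining features of an NCS in $X^{1,k+1}_{1,w_{\tilde\beta}}$. Pointwise a.e.\ convergence $z_j \to 0$ follows from the compact embedding in Corollary \ref{thm0} applied to $(v_j)$; together with boundedness in $X^{1,k+1}_{1,w_{\tilde\beta}}$ this yields, along a subsequence, $z_j \rightharpoonup 0$ weakly. The concentration at the origin is obtained by integrating the pointwise bound derived inside the proof of Lemma \ref{C-change},
\[
r^{n-k} w_{\tilde\beta} |z_j'|^{k+1} \le \tfrac{1-\tilde\beta}{1-\beta}\, r^{n-k} |v_j'|^{k+1} w_\beta \Big(c_n \!\int_r^1 s^{n-k} |v_j'|^{k+1} w_\beta\, \mathrm{d}s\Big)^{\!\tfrac{\beta-\tilde\beta}{1-\beta}},
\]
over $(r_0,1)$: estimating the inner integral by $\int_{r_0}^1\to 0$ gives $c_n\int_{r_0}^1 r^{n-k} w_{\tilde\beta}|z_j'|^{k+1}\, \mathrm{d}r \to 0$.

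Setting $C_j := \|z_j\|_{w_{\tilde\beta}} \in [0,1]$, I would split into two cases. If $\liminf_j C_j>0$, then $\tilde z_j:= z_j/C_j$ is an NCS in $X^{1,k+1}_{1,w_{\tilde\beta}}$ (weak convergence, concentration and unit norm all survive the bounded rescaling) and $C_j\le 1$ gives $J_{n,\tilde\beta}(z_j)\le J_{n,\tilde\beta}(\tilde z_j)$; passing to limsup and invoking the definition of $J^\delta_{n,\tilde\beta}(0)$ closes this case. If instead $C_j\to 0$ along a subsequence, the rescaling collapses; I then invoke Corollary \ref{corollary-radial}(a) directly on $z_j$, which, after raising to the critical power $\gamma_{n,\tilde\beta}$ and using the algebraic identity $\alpha_{n,\tilde\beta}\,(c_n^{2/(n+2)}(1-\tilde\beta)^{n/(n+2)})^{-\gamma_{n,\tilde\beta}}=n$, gives $\alpha_{n,\tilde\beta}|z_j(r)|^{\gamma_{n,\tilde\beta}} \le n\, C_j^{\gamma_{n,\tilde\beta}}\,\ln(1/r)$, hence
\[
J_{n,\tilde\beta}(z_j) \le \int_0^1 r^{n(1-C_j^{\gamma_{n,\tilde\beta}})-1}\, \mathrm{d}r \;=\; \tfrac{1}{n(1-C_j^{\gamma_{n,\tilde\beta}})} \;\longrightarrow\; \tfrac{1}{n} \;\le\; J^\delta_{n,\tilde\beta}(0),
\]
where the last inequality follows from the trivial lower bound $J_{n,\tilde\beta}(\cdot) \ge 1/n$. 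Taking the supremum over NCS yields $J^\delta_{n,\beta}(0) \le J^\delta_{n,\tilde\beta}(0)$.

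The main obstacle is the degenerate case $C_j \to 0$: the natural candidate $z_j/C_j$ need not preserve the weak-to-zero convergence or the concentration at the origin, so one cannot directly produce an NCS in $X^{1,k+1}_{1,w_{\tilde\beta}}$. The fact that the constant in Corollary \ref{corollary-radial}(a) saturates to $n$ at the critical Moser exponent $\alpha_{n,\tilde\beta}$ is precisely what makes the direct estimate $J_{n,\tilde\beta}(z_j)\to 1/n$ go through in this regime, and thus controls the supremum by the trivial concentration level.
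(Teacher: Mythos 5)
Your proof is correct, and it takes a cleaner route than the paper's at the point where the two approaches diverge, namely the case analysis after the change of variables \eqref{changeV1}. The paper splits on $\limsup_j\|z_j\|_{w_{\tilde\beta}}<1$ versus $=1$, invoking the sharp inequality of Proposition \ref{prop-thm1}$(b)$ together with a H\"older argument in the first case, and in the second case distinguishing whether the normalized sequence $\tilde z_j=z_j/\|z_j\|_{w_{\tilde\beta}}$ is an NCS or not, handling the ``not NCS'' sub-case with a further truncation construction. You avoid both complications by two observations the paper never makes explicit: first, that the pointwise bound inside the proof of Lemma \ref{C-change}, when integrated over $(r_0,1)$, shows the concentration-at-origin property of $(v_j)$ is inherited by $(z_j)$ (so that whenever $C_j=\|z_j\|_{w_{\tilde\beta}}$ stays bounded below, $\tilde z_j$ is \emph{automatically} an NCS and no sub-case arises); and second, that in the degenerate regime $C_j\to0$ the sharp radial pointwise estimate of Corollary \ref{corollary-radial}$(a)$ — whose constant saturates exactly to $n$ at the critical exponent — gives $J_{n,\tilde\beta}(z_j)\le 1/(n(1-C_j^{\gamma_{n,\tilde\beta}}))\to 1/n$ by an elementary computation, rather than through the critical Trudinger--Moser inequality. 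Your dichotomy ($\liminf C_j>0$ vs. $C_j\to 0$ along a subsequence) is not the same partition as the paper's, but it covers all possibilities after the standard pass to a subsequence realizing the $\limsup$; that subsequence bookkeeping is only implicit in your writeup and is worth stating explicitly, but the argument is sound.
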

\begin{proof}
   Note that for any $\beta\in [0,1)$ and any normalized concentrating sequence $(v_j)\subset X^{1,k+1}_{1,w_{\beta}}$ at origin, we have 
   \begin{equation}\label{eq413}
       J^{\delta}_{n,\beta}(0)\ge \limsup_{j\to\infty}\int_{0}^{1}r^{n-1}\mathrm{e}^{\alpha_{n,\beta}|v_j|^{\gamma_{n,\beta}}}\,\mathrm{d}r\ge \frac{1}{n}.
   \end{equation}
   Without loss of generality, assume $\displaystyle\lim_{j\to\infty} J_{n,\beta}(v_j)$ exists. Fix $\widetilde\beta\in[0,\beta)$. Our goal is to prove that $J^\delta_{n,\beta}(0)\leq J^\delta_{n,\widetilde\beta}(0)$. More specifically, it suffices to show that
   \begin{equation}\label{ineqenough}
       \lim_{j\to\infty}J_{n,\beta}(v_j)\leq J^\delta_{n,\widetilde\beta}(0).
   \end{equation}
   Let $(z_j)$ be the sequence defined from $(v_j)$ via \eqref{changeV1}. By Lemma \ref{r-estimate}, $(z_j)$ and $(v_j)$ converge uniformly to zero on $(r_0,1)$ for any $r_0\in(0,1)$. Furthermore, by Lemma \ref{C-change}, we have $\|z_j\|_{w_{\widetilde\beta}}\leq1$. Therefore, up to subsequence, $z_j\rightharpoonup z$ in $X^{1,k+1}_{1,\omega_{\widetilde \beta}}$ for some $z\in X^{1,k+1}_{1,\omega_{\widetilde \beta}}$. From the item (ii) of Corollary~\ref{thm0}, it follows that $z_j\to z$ in $L^q_{n-1}(0,1)$ for all $1\leq q<\infty$. Since $(z_j)$ converges uniformly to zero on $(r_0,1)$, we conclude $z=0$. Thus, $z_j\rightharpoonup0$ in $X^{1,k+1}_{1,\omega_{\widetilde \beta}}$. 
   
   The proof will now be divided into two cases.

\smallskip

\noindent\underline{Case 1:} $\displaystyle\limsup_{j\to\infty}\|z_j\|_{w_{\widetilde\beta}}<1$.

\noindent In this case, there exist $a\in(0,1)$ and $j_0\in\mathbb N$ such that $\|z_j\|_{w_{\widetilde\beta}}\leq a$ for all $j\geq j_0$. Using the item (b) of Proposition \ref{prop-thm1}, we conclude that
\begin{equation}\label{asfoknkasd}
\sup_{j}\int_0^1r^{n-1}\mathrm{e}^{p\alpha_{n,\widetilde\beta}|z_j|^{\gamma_{n,\widetilde\beta}}}\mathrm \,\mathrm{d} r<\infty
\end{equation}
for all $1<p\leq 1/a^{\gamma_{n,\widetilde\beta}}$. Fix $1<p_0\leq 1/a^{\gamma_{n,\widetilde\beta}}$. Using the inequality $\mathrm{e}^x-1\leq x\mathrm{e}^x$, we obtain
\begin{align}
&\left|\int_0^1r^{n-1}\mathrm{e}^{\alpha_{n,\widetilde \beta}|z_j|^{\gamma_{n,\widetilde\beta}}}-\int_0^1r^{n-1}\mathrm \,\mathrm{d} r\right|\leq\int_0^1r^{n-1}\alpha_{n,\widetilde \beta}|z_j|^{\gamma_{n,\widetilde\beta}}\mathrm{e}^{\alpha_{n,\widetilde \beta}|z_j|^{\gamma_{n,\widetilde\beta}}}\mathrm \,\mathrm{d} r\nonumber\\
&\qquad\leq\alpha_{n,\widetilde\beta}\left(\int_0^1r^{n-1}|z_j|^{\frac{p_0\gamma_{n,\widetilde\beta}}{p_0-1}}\mathrm \,\mathrm{d} r\right)^{\frac{p_0-1}{p_0}}\left(\int_0^1r^{n-1}\mathrm{e}^{p_0\alpha_{n,\widetilde\beta}|z_j|^{\gamma_{n,\widetilde\beta}}}\mathrm \,\mathrm{d} r\right)^{\frac{1}{p_0}}\overset{j\to\infty}\longrightarrow 0,\label{expineq}
\end{align}
where we used that $z_j\to0$ in $L^q_{n-1}(0,1)$ for all $1\leq q<\infty$ and \eqref{asfoknkasd}. Thus,
\begin{equation*}
\lim_{j\to\infty}\int_0^1r^{n-1}\mathrm{e}^{\alpha_{n,\beta}|v_j|^{\gamma_{n,\beta}}}\mathrm \,\mathrm{d} r=\lim_{j\to\infty}\int_0^1r^{n-1}\mathrm{e}^{p_0\alpha_{n,\widetilde\beta}|z_j|^{\gamma_{n,\widetilde\beta}}}\mathrm \,\mathrm{d} r=\frac{1}{n}\leq J_{n,\widetilde\beta}^{\delta}(0).
\end{equation*}
Since $(v_j)$ was arbitrarily chosen, we conclude $J^\delta_{n,\beta}(0)\leq J^\delta_{n,\widetilde\beta}(0)$. This completes the proof of \eqref{ineqenough} for Case 1.

\smallskip

\noindent\underline{Case 2:} $\displaystyle\limsup_{j\to\infty}\|z_j\|_{w_{\widetilde\beta}}=1$.

\noindent In this case, up to subsequence, we assume $\displaystyle\lim_{j\to\infty}\|z_j\|_{w_{\widetilde\beta}}=1$. Define $\widetilde z_j=z_j/\|z_j\|_{w_{\widetilde\beta}}$. Note that $\widetilde z_j\rightharpoonup 0$ in $X^{1,k+1}_{1,w_{\widetilde\beta}}$ and $|z_j|\leq|\widetilde z_j|$. It suffices to consider the case in which there exist $a,r_0\in(0,1)$ such that for every $j_0$, there exist $j\geq j_0$ satisfying
\begin{equation}\label{eq415}
\int_0^{r_0}r^{n-k}w_{\widetilde\beta}(r)|\widetilde z^{\prime}_j|^{k+1}\mathrm \,\mathrm{d} r\leq a.
\end{equation}
Indeed, if \eqref{eq415} does not hold, then $(\widetilde z_j)$ is a normalized concentrating sequence (NCS). Using $|z_j|\leq|\widetilde z_j|$, we get
\begin{equation*}
\lim_{j\to\infty}J_{n,\beta}(v_j)=\lim_{j\to\infty}J_{n,\widetilde\beta}(z_j)\leq \limsup_{j\to\infty}J_{n,\widetilde\beta}(\widetilde z_j)\leq J_{n,\widetilde\beta}^\delta(0),
\end{equation*}
which concludes \eqref{ineqenough}.

Define the function $\overline z_j$ as
\begin{equation*}
\overline z_j(r)=\left\{\begin{array}{ll}
     \widetilde z_j(r)-\widetilde z_j(r_0),&\mbox{if }0<r<r_0,  \\
     0,&\mbox{if }r_0\leq r<1. 
\end{array}\right.
\end{equation*}
Note that $\overline z_j\in X^{1,k+1}_{1,w_{\widetilde\beta}}$ and by \eqref{eq415}, together with a passage to a subsequence, we have $\|\overline z_j\|^{k+1}_{w_{\widetilde\beta}}\leq a<1$. Fix $\epsilon>0$ small enough such that $(1+\epsilon)a^{\frac{1}{k(1-\widetilde\beta)}}<1$. Using the inequality
\begin{equation*}
|\widetilde z_j(r)|^{\gamma_{n,\widetilde\beta}}\leq (1+\epsilon)|\overline z_j(r)|^{\gamma_{n,\widetilde\beta}}+C(n,\widetilde\beta,\epsilon)|\widetilde z_j(r_0)|^{\gamma_{n,\widetilde\beta}},\quad\forall 0<r<r_0,
\end{equation*}
where $C(n,\widetilde\beta,\epsilon)=\left(1-(1+\epsilon)^{\frac{1}{1-\gamma_{n,\widetilde\beta}}}\right)^{1-\gamma_{n,\widetilde\beta}}$, and following an analogous argument to \eqref{expineq}, we deduce
\begin{equation}\label{eq417}
\lim_{j\to\infty}\int_0^{r_0}r^{n-1}\mathrm{e}^{\alpha_{n,\widetilde\beta}|\widetilde z_j|^{\gamma_{n,\widetilde\beta}}}\mathrm \,\mathrm{d} r=\frac{r_0^n}n.
\end{equation}
Since $(\widetilde z_j)$ converges uniformly to zero on $(r_0,1)$, it follows that
\begin{equation}\label{eq418}
\lim_{j\to\infty}\int_{r_0}^1r^{n-1}\mathrm{e}^{\alpha_{n,\widetilde\beta}|\widetilde z_j|^{\gamma_{n,\widetilde\beta}}}\mathrm \,\mathrm{d} r=\frac{1}n-\frac{r_0^n}n.
\end{equation}
By summing \eqref{eq417} and \eqref{eq418}, we obtain
\begin{equation*}
\lim_{j\to\infty}\int_0^{1}r^{n-1}\mathrm{e}^{\alpha_{n,\widetilde\beta}|\widetilde z_j|^{\gamma_{n,\widetilde\beta}}}\mathrm \,\mathrm{d} r=\frac{1}n.
\end{equation*}
Consequently, using \eqref{eq413}, we have
\begin{equation*}
\lim_{j\to\infty}J_{n,\beta}(v_j)=\lim_{j\to\infty}J_{n,\widetilde\beta}(z_j)\leq\lim_{j\to\infty}J_{n,\widetilde\beta}(\widetilde z_j)=\frac{1}{n}\leq J^\delta_{n,\widetilde\beta}(0).
\end{equation*}
This concludes \eqref{ineqenough} and, therefore, completes the proof of the lemma.
\end{proof}

\begin{lemma}\label{APPROX} It holds that
$
\mathcal{MT}(n,\alpha_{n,0},0)=\displaystyle\lim_{\beta\to 0}\mathcal{MT}(n,\alpha_{n,\beta},\beta).
$
\end{lemma}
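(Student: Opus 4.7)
The monotonicity from Lemma~\ref{monotonicityMT} already does half of the work: it shows that $L := \lim_{\beta \to 0^+} \mathcal{MT}(n,\alpha_{n,\beta},\beta)$ exists and satisfies $L \leq \mathcal{MT}(n,\alpha_{n,0},0)$. The whole task is therefore the reverse inequality $L \geq \mathcal{MT}(n,\alpha_{n,0},0)$, which I plan to obtain by an explicit approximation argument rather than by extracting a limit of near-maximizers $v_\beta \in X^{1,k+1}_{1,w_\beta}$ (these would live in different spaces and be hard to compare directly).

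Given $\epsilon>0$, I would first select $u \in X^{1,k+1}_{1,w_0}$ with $\|u\|_{w_0}\leq 1$ and $J_{n,0}(u) > \mathcal{MT}(n,\alpha_{n,0},0)-\epsilon$. The natural attempt of simply using $u$ as a test function for $\mathcal{MT}(n,\alpha_{n,\beta},\beta)$ breaks down, because the extra weight $(\ln(1/r))^{\beta n/2}$ blows up at the origin and can destroy integrability. To get around this I would truncate in $r$: set $u^{(\rho)}(r):=u(\max(r,\rho))$ for small $\rho>0$. Then $(u^{(\rho)})'$ is supported in $[\rho,1]$, so $\|u^{(\rho)}\|_{w_\beta}^{k+1} \leq (\ln(1/\rho))^{\beta n/2}\|u^{(\rho)}\|_{w_0}^{k+1} \leq (\ln(1/\rho))^{\beta n/2}$, and by Corollary~\ref{corollary-radial}(a) the function $u^{(\rho)}$ is bounded by $M_\rho = c_n^{-2/(n+2)}(\ln(1/\rho))^{n/(n+2)}$. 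With $u$ fixed, these two properties are exactly what allow dominated convergence to be applied in $\beta$ later.

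Next I would verify that $J_{n,0}(u^{(\rho)}) \to J_{n,0}(u)$ as $\rho \to 0^+$: on $[\rho,1]$ this is monotone convergence, while on $[0,\rho]$ the contribution is controlled by $(\rho^n/n)\mathrm{e}^{\alpha_{n,0}M_\rho^{\gamma_{n,0}}} \leq \rho^{n-1}/n \to 0$ (using $n\geq 2$ and the radial lemma). Fixing $\rho$ small enough that $J_{n,0}(u^{(\rho)}) > \mathcal{MT}(n,\alpha_{n,0},0)-2\epsilon$, I would then let $\beta\to 0^+$: dominated convergence (with majorants provided by the integrable weight $(\ln(1/\rho))^{\beta_0 n/2}r^{n-k}|u'|^{k+1}$ for the norm and the bound $|u^{(\rho)}|\leq M_\rho$ for the exponential) yields $\|u^{(\rho)}\|_{w_\beta}\to\|u^{(\rho)}\|_{w_0}\leq 1$ and $J_{n,\beta}(u^{(\rho)})\to J_{n,0}(u^{(\rho)})$. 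Setting $\lambda_\beta := \min(1,1/\|u^{(\rho)}\|_{w_\beta})\leq 1$ gives an admissible test function $\lambda_\beta u^{(\rho)}$ for $\mathcal{MT}(n,\alpha_{n,\beta},\beta)$, and a further application of dominated convergence (using $\lambda_\beta\to 1$, $\alpha_{n,\beta}\to\alpha_{n,0}$, $\gamma_{n,\beta}\to\gamma_{n,0}$) yields $J_{n,\beta}(\lambda_\beta u^{(\rho)})\to J_{n,0}(u^{(\rho)})$, producing $L \geq \mathcal{MT}(n,\alpha_{n,0},0)-2\epsilon$ and letting $\epsilon\to 0$ closes the argument.

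The main obstacle I expect to manage carefully is the transfer step from $w_0$ to $w_\beta$: arbitrary elements of $X^{1,k+1}_{1,w_0}$ need not lie in $X^{1,k+1}_{1,w_\beta}$ at all, so one cannot just plug $u$ in and pass to the limit, and moreover the exponential $\mathrm{e}^{\alpha_{n,\beta}|u|^{\gamma_{n,\beta}}}$ is not continuous in $\beta$ under mere $L^\infty$ bounds when $|u|$ is large (because $\gamma_{n,\beta}>\gamma_{n,0}$). The $r$-truncation $u^{(\rho)}$ simultaneously handles both issues—it confines the weight singularity away from the support of the derivative and supplies a uniform $L^\infty$ bound $M_\rho$ that makes the exponential integrands $\beta$-uniformly dominated. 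The rest of the argument is routine dominated convergence.
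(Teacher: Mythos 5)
Your route to the key inequality $\lim_{\beta\to 0^+}\mathcal{MT}(n,\alpha_{n,\beta},\beta)\ge\mathcal{MT}(n,\alpha_{n,0},0)$ is genuinely different from the paper's. The paper fixes a maximizing sequence $(v_{j,0})$ for the unweighted problem and replaces each $v_{j,0}$ by $v_{j,\beta}(r)=-\int_r^1 v_{j,0}'(s)(\ln\frac1s)^{-\beta k/(k+1)}\,\mathrm ds$; since $k=n/2$, the factor $(\ln\frac1s)^{-\beta k/(k+1)}$ \emph{exactly} cancels the extra weight, so $\|v_{j,\beta}\|_{w_\beta}=\|v_{j,0}\|_{X^{1,k+1}_1}$ with no error term, and the conclusion then follows from pointwise convergence $v_{j,\beta}\to v_{j,0}$ (via a tailored majorant) plus Fatou. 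You instead keep the test function essentially fixed and kill the weight singularity by a radial truncation $u^{(\rho)}$, then compensate for the resulting norm inflation with a $\min$-normalization $\lambda_\beta$ and pass to the limit by dominated convergence. Both are correct; the paper's renormalization is tighter (no normalization factor to track) while your truncation is more elementary and makes the dominated convergence transparent because $|u^{(\rho)}|\le M_\rho$ uniformly.

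One computation in your write-up is wrong, though it does not sink the argument. You claim the contribution on $[0,\rho]$ is bounded by $(\rho^n/n)\mathrm{e}^{\alpha_{n,0}M_\rho^{\gamma_{n,0}}}\le\rho^{n-1}/n\to0$. With $M_\rho=c_n^{-2/(n+2)}(\ln(1/\rho))^{n/(n+2)}$, $\gamma_{n,0}=\frac{n+2}{n}$ and $\alpha_{n,0}=nc_n^{2/n}$, a direct calculation gives $\alpha_{n,0}M_\rho^{\gamma_{n,0}}=n\ln(1/\rho)$, hence $\mathrm{e}^{\alpha_{n,0}M_\rho^{\gamma_{n,0}}}=\rho^{-n}$ and $(\rho^n/n)\mathrm{e}^{\alpha_{n,0}M_\rho^{\gamma_{n,0}}}=1/n$, a constant rather than $o(1)$. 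This is the usual Moser borderline cancellation, and in general the $[0,\rho]$ contribution does \emph{not} vanish without monotonicity assumptions on $u$. Fortunately you do not need it to: what your step really requires is that there exist $\rho$ with $J_{n,0}(u^{(\rho)})>\mathcal{MT}(n,\alpha_{n,0},0)-2\epsilon$, and that already follows from $J_{n,0}(u^{(\rho)})\ge\int_\rho^1 r^{n-1}\mathrm{e}^{\alpha_{n,0}|u|^{\gamma_{n,0}}}\,\mathrm dr\to J_{n,0}(u)$ by monotone convergence (the $[0,\rho]$ piece being nonnegative). So the estimate should simply be deleted and replaced by this lower bound; with that correction the proof is sound.
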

\begin{proof}
From Lemma \ref{monotonicityMT}, we know that
\begin{equation*}
\limsup_{\beta\to0}\mathcal{MT}(n,\alpha_{n,\beta},\beta)\leq \mathcal{MT}(n,\alpha_{n,0},0).
\end{equation*}
To complete the proof, it suffices to show that
\begin{equation}\label{eq420}
\liminf_{\beta\to0}\mathcal{MT}(n,\alpha_{n,\beta},\beta)\geq\mathcal{MT}(n,\alpha_{n,0},0).
\end{equation}

Let $(v_{j,0})$ be a maximizing sequence for $\mathcal{MT}(n,\alpha_{n,0},0)$, i.e., $\|v_{j,0}\|_{X^{1,k+1}_1}=1$ and
\begin{equation}\label{eq421}
\lim_{j\to\infty}\int_0^1r^{n-1}\mathrm{e}^{\alpha_{n,0}|v_{j,0}|^{\gamma_{n,0}}}\mathrm \,\mathrm{d} r=\mathcal{MT}(n,\alpha_{n,0},0).
\end{equation}
For each $j$ and $\beta\in(0,1)$, define
\begin{equation*}
v_{j,\beta}(r)=-\int_r^1v_{j,0}^{\prime}(s)\left(\ln\frac1s\right)^{-\frac{\beta k}{k+1}}\,\mathrm ds.
\end{equation*}
To show that $v_{j,\beta}$ is well-defined, we estimate its integrand in the following way
\begin{align}
\int_r^1|v_{j,0}^{\prime}(s)|\left(\ln\frac1s\right)^{-\frac{\beta k}{k+1}}\,\mathrm ds&\leq\left(\int_{r}^1s^{n-k}|v_{j,0}^{\prime}|^{k+1}\,\mathrm ds\right)^{\frac1{k+1}}\left(\int_r^1s^{-1}\left(\ln\frac1s\right)^{-\beta}\,\mathrm ds\right)^{\frac{k}{k+1}}\nonumber\\
&= c_n^{-\frac1{k+1}}\left(\int_0^{\ln\frac1r}t^{-\beta}\mathrm \,\mathrm{d}t\right)^{\frac{k}{k+1}}=\frac{c_n^{-\frac1{k+1}}}{(1-\beta)^{\frac{k}{k+1}}}\left(\ln\frac{1}{r}\right)^{\frac{(1-\beta)k}{k+1}}.\label{eq422}
\end{align}
This ensures $v_{j,\beta}$ is well-defined. Moreover, we verify that $v_{j,\beta}\in X^{1,k+1}_{1,w_\beta}$ with $\|v_{j,\beta}\|_{w_\beta}=\|v_{j,0}\|_{X^{1,k+1}_1}=1$.

\begin{figure}[h]
    \centering
    \includegraphics[width=0.8\textwidth]{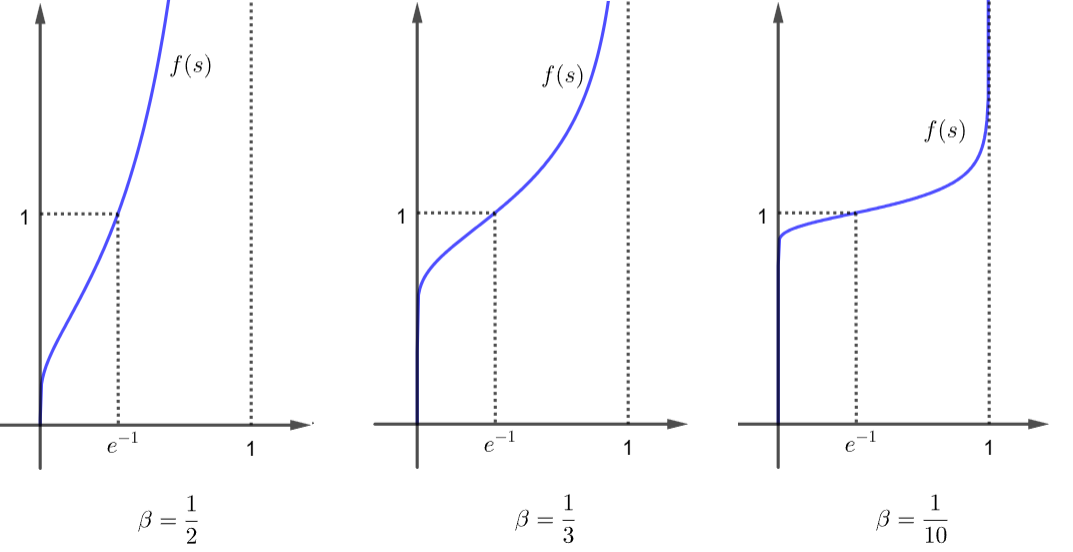}
    \caption{Intuition for the choice of $F$}
    \label{figure1}
    \end{figure}

We claim that, for each $j$,
\begin{equation}\label{eq423}
\lim_{\beta\to0}v_{j,\beta}(r)=v_{j,0}(r),\quad\forall r\in(0,1).
\end{equation}
Using the Dominated Convergence Theorem, it suffices to check that $|v_{j,0}^{\prime}(s)|F(s)$ is in $L^1(r,1)$ for each $r\in(0,1)$, where
\begin{equation*}
F(s)=\left\{\begin{array}{ll}
     1,&\mbox{if }s\leq \mathrm{e}^{-1},  \\
     \left(\ln\frac1s\right)^{-\frac{k}{2k+2}},&\mbox{if }s>\mathrm{e}^{-1}. 
\end{array}\right.
\end{equation*}
Figure \ref{figure1} highlights the motivation behind the definition of $F$, showing the behavior of $f(s)=(\ln\frac1s)^{-\frac{\beta k}{k+1}}$ as $\beta\to0$.
    
\noindent Applying \eqref{eq422}, we have
\begin{align*}
\int_r^1|v^{\prime}_{j,0}(s)|F(s)\,\mathrm ds&=\|v^{\prime}_{j,0}\|_{L^1(r,\mathrm{e}^{-1})}+\int_{\mathrm{e}^{-1}}^1|v^{\prime}_{j,0}(s)|\left(\ln\frac1s\right)^{-\frac{k}{2k+2}}\,\mathrm ds\\
&\leq \|v^{\prime}_{j,0}\|_{L^1(r,\mathrm{e}^{-1})}+\frac{c_n^{-\frac1{k+1}}}{(1-\beta)^{\frac{k}{k+1}}}.
\end{align*}
Since $v^{\prime}_{j,0}\in L^{k+1}_{n-k}(r,\mathrm{e}^{-1})\subset L^1(r,\mathrm{e}^{-1})$, we conclude that $|v^{\prime}_{j,0}(s)|F(s)\in L^1(r,1)$, establishing \eqref{eq423}.

Using Fatou's Lemma and \eqref{eq423}, we obtain
\begin{align*}
\liminf_{\beta\to0}\mathcal{MT}(n,\alpha_{n,\beta},\beta)&\geq\liminf_{\beta\to0}\int_0^1r^{n-1}\mathrm{e}^{\alpha_{n,\beta}|v_{j,\beta}|^{\gamma_{n,\beta}}}\mathrm \,\mathrm{d} r\geq \int_0^1r^{n-1}\liminf_{\beta\to0}\mathrm{e}^{\alpha_{n,\beta}|v_{j,\beta}|^{\gamma_{n,\beta}}}\mathrm \,\mathrm{d} r\\
&=\int_0^1r^{n-1}\mathrm{e}^{\alpha_{n,0}|v_{j,0}|^{\gamma_{n,0}}}\mathrm \,\mathrm{d} r.
\end{align*}
Since \eqref{eq421} and the inequality above holds for all $j$, we conclude \eqref{eq420}, thereby completing the proof of the lemma.
\end{proof}
Now, we are in a position to prove that the Trudinger-Moser-type supremum $\mathcal{MT}(n,\alpha,\beta)$ in \eqref{PI-ub} is attained for $\alpha=\alpha_{n,\beta}$ and $w=w_{\beta}$, at least for $\beta\geq0$ small enough.
\begin{prop}\label{extremalX} Let $w=w_{\beta}(r)=(\ln \frac{1}{r})^{\frac{\beta n}{2}}$ in \eqref{PI-ub}. There exists $\beta_0\in (0,1)$ such that $\mathcal{MT}(n,\alpha_{n,\beta},\beta)$ is attained for any $\beta\in (0,\beta_0)$.
\end{prop}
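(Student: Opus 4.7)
The plan is a concentration--compactness dichotomy at the critical exponential level, exploiting the strict gap between the supremum $\mathcal{MT}(n,\alpha_{n,\beta},\beta)$ and the concentration level $J^{\delta}_{n,\beta}(0)$ that has been set up by the preceding lemmas. The first step is the choice of $\beta_0$: combining Lemma~\ref{S-classic}, Lemma~\ref{APPROX}, the monotonicity in Lemma~\ref{monotonicity} and the bound \eqref{b-c}, I may pick $\beta_0\in(0,1)$ such that
\[
\mathcal{MT}(n,\alpha_{n,\beta},\beta)\;>\;\frac{1}{n}\bigl(1+\mathrm e^{\Psi(n/2+1)+\gamma}\bigr)\;\ge\;J^{\delta}_{n,0}(0)\;\ge\;J^{\delta}_{n,\beta}(0), \quad \forall\,\beta\in[0,\beta_0).
\]
This strict gap will be the crux of the argument.

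Fix $\beta\in(0,\beta_0)$ and take a maximizing sequence $(v_j)\subset\widetilde{\Sigma}$ with $\|v_j\|_{w_\beta}=1$. After extraction, $v_j\rightharpoonup v_0$ weakly in $X^{1,k+1}_{1,w_\beta}$; by Corollary~\ref{thm0}, $v_j\to v_0$ in $L^q_{n-1}(0,1)$ for every $q\in[1,\infty)$; and the radial estimate of Corollary~\ref{corollary-radial} together with Ascoli--Arzel\`a yields $v_j\to v_0$ uniformly on every compact subset of $(0,1]$. If $v_0\not\equiv 0$, then Lemma~\ref{Lions-improvement} provides some $p>1$ with $\sup_j\int_0^1 r^{n-1}\mathrm e^{p\alpha_{n,\beta}|v_j|^{\gamma_{n,\beta}}}\,\mathrm{d} r<\infty$, giving uniform integrability of $\mathrm e^{\alpha_{n,\beta}|v_j|^{\gamma_{n,\beta}}}$ against the measure $r^{n-1}\,\mathrm{d} r$. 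Combined with the a.e.\ convergence $v_j\to v_0$, I can pass to the limit to obtain
\[
J_{n,\beta}(v_0)=\lim_{j\to\infty}J_{n,\beta}(v_j)=\mathcal{MT}(n,\alpha_{n,\beta},\beta),
\]
and since weak lower semicontinuity forces $\|v_0\|_{w_\beta}\le 1$, the limit $v_0\in\widetilde\Sigma$ is a maximizer.

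It therefore remains to rule out $v_0\equiv 0$, which is the main obstacle. Arguing by contradiction, I would follow the strategy developed in the proof of Lemma~\ref{monotonicity}: for each small $r_0>0$ the uniform convergence $v_j\to 0$ on $[r_0,1]$ gives $v_j(r_0)\to 0$ and $\int_{r_0}^1 r^{n-1}\mathrm e^{\alpha_{n,\beta}|v_j|^{\gamma_{n,\beta}}}\,\mathrm{d} r\to(1-r_0^n)/n$, so the excess $\mathcal{MT}(n,\alpha_{n,\beta},\beta)-1/n$ must come from a shrinking neighborhood of the origin. Setting $\widetilde v_j:=(v_j-v_j(r_0))\chi_{[0,r_0]}\in X^{1,k+1}_{1,w_\beta}$ and splitting into the cases $\limsup_j\|\widetilde v_j\|_{w_\beta}<1$ and $\limsup_j\|\widetilde v_j\|_{w_\beta}=1$ (exactly as in Lemma~\ref{monotonicity}) would show: in the first case, a subcritical Trudinger--Moser bound obtained from Proposition~\ref{prop-thm1}\,$(b)$ combined with uniform convergence away from the origin forces $\lim_j J_{n,\beta}(v_j)\to 1/n$ as $r_0\to 0$, contradicting $\mathcal{MT}(n,\alpha_{n,\beta},\beta)>1/n$; in the second, $(v_j)$ is (up to the harmless normalization used there) a normalized concentrating sequence at the origin, so $\lim_j J_{n,\beta}(v_j)\le J^{\delta}_{n,\beta}(0)$, contradicting the choice of $\beta_0$. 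The technically delicate point, and where I expect the main difficulty to lie, is making the splitting argument rigorous so as to exclude any partial mass escape of the sequence of energy measures $c_n r^{n-k}w_\beta(r)|v_j^{\prime}|^{k+1}\,\mathrm{d} r$ to an interior point of $(0,1]$.
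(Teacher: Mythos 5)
Your proposal follows essentially the same route as the paper's proof: identical choice of the threshold $\beta_0$ via Lemmas \ref{S-classic}, \ref{monotonicity}, \ref{APPROX} and the bound \eqref{b-c}; passage to a weak limit $v_0$ along a maximizing sequence using Corollary~\ref{thm0}; use of Lemma~\ref{Lions-improvement} and uniform integrability to identify the limit of $J_{n,\beta}(v_j)$ when $v_0\not\equiv0$; and ruling out $v_0\equiv0$ via the NCS/non-NCS dichotomy mirrored in your $\limsup\|\widetilde v_j\|_{w_\beta}<1$ versus $=1$ case split, which coincides with the construction used in the proof of Lemma~\ref{monotonicity} (since $\|\widetilde v_j\|_{w_\beta}^{k+1}=c_n\int_0^{r_0}r^{n-k}w_\beta|v_j'|^{k+1}\,\mathrm{d}r$). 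The step you flag as delicate is handled in the paper exactly as you outline it; the only cosmetic difference is that the paper also normalizes so that $\|v_0\|_{w_\beta}=1$ at the end, which is not required for attainability over $\widetilde\Sigma$ but is used later for the Lagrange-multiplier argument.
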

\begin{proof}
 From Lemma \ref{S-classic} and \eqref{b-c}, we have
 \begin{equation*}
\mathcal{MT}(n,\alpha_{n,0},0)>\dfrac1n\Big(1+\mathrm{e}^{\Psi(\frac{n}{2}+1)+\gamma}\Big)\geq J_{n,0}^\delta(0).
 \end{equation*}
 Using Lemma \ref{APPROX} and Lemma \ref{monotonicity}, there exists $\beta_0\in(0,1)$ such that
 \begin{equation}\label{eq424}
\mathcal{MT}(n,\alpha_{n,\beta},\beta)>J^\delta_{n,0}(0)\geq J^\delta_{n,\beta}(0),
 \end{equation}
 for all $\beta\in[0,\beta_0)$.
 
 By combining 
 \eqref{b-c}, Lemma~\ref{S-classic}, Lemma~\ref{monotonicity}, and Lemma~\ref{APPROX}, we conclude that there exists $\beta_0\in (0,1)$ such that 
    \begin{equation*}
       \mathcal{MT}(n,\alpha_{n,\beta},\beta) > \frac{1}{n}\Big(1+\mathrm{e}^{\Psi(\frac{n}{2}+1)+\gamma}\Big)
       \ge J^{\delta}_{n,0}(0)
       \ge J^{\delta}_{n,\beta}(0)
    \end{equation*}
    for all $\beta\in [0,\beta_0)$.

    Fix $\beta\in[0,\beta_0)$ and let $(v_j)$ be a maximizing sequence for $\mathcal{MT}(n,\alpha_{n,\beta},\beta)$. By passing to a subsequence, we have $v_j\rightharpoonup v_0$ in $X^{1,k+1}_{1,w_\beta}$ for some $v_0\in X^{1,k+1}_{1,w_\beta}$. We claim that
\begin{equation}\label{eq425}
v_0\not\equiv0.
\end{equation}

    Assume, for the sake of contradiction, that $v_0\equiv0$. If $(v_j)$ is a NCS, then
    \begin{equation*}
    \mathcal{MT}(n,\alpha_{n,\beta},\beta)=\lim_{j\to\infty}J_{n,\beta}(v_j)\leq J^\delta_{n,\beta}(0),
    \end{equation*}
    which contradicts \eqref{eq424}. Thus, $(v_j)$ is not a NCS. Consequently, there exist $a,r_0\in(0,1)$ such that for every $j_0$, there exists $j\geq j_0$ satisfying
    \begin{equation*}
    \int_0^{r_0}r^{n-k}w_\beta(r)|v_j^{\prime}|^{k+1}\mathrm \,\mathrm{d} r\leq a.
    \end{equation*}
    Using similar arguments as in the proof of Lemma \ref{monotonicity} (specifically, the steps used following \eqref{eq415} and \eqref{eq413}), we obtain
    \begin{equation*}
    \mathcal{MT}(n,\alpha_{n,\beta},\beta)=\lim_{j\to\infty}J_{n,\beta}(v_j)=\frac1n\leq J^\delta_{n,\beta}(0),
    \end{equation*}
    which again contradicts \eqref{eq424}. Therefore, the proof of \eqref{eq425} is complete.

    By Lemma \ref{Lions-improvement}, we have
    \begin{equation*}
    \limsup_{j\to\infty}\int_0^1r^{n-1}\mathrm{e}^{p\alpha_{n,\beta}|v_j|^{\gamma_{n,\beta}}}\mathrm \,\mathrm{d} r<\infty
    \end{equation*}
    for all $0<p<(1-\|v_0\|^{k+1}_{w_\beta})^{-\frac{2}{n(1-\beta)}}$. Note that $(1-\|v_0\|^{k+1}_{w_\beta})^{-\frac{2}{n(1-\beta)}}>1$ by \eqref{eq425}. On the other hand, by Corollary \ref{thm0}, $v_j\to v_0$ in $L^q_{n-1}(0,1)$ for all $1\leq q<\infty$. Following similar ideas as in \eqref{expineq}, we obtain
    \begin{equation*}
    \mathcal{MT}(n,\alpha_{n,\beta},\beta)=\lim_{j\to\infty}J_{n,\beta}(v_j)=J_{n,\beta}(v_0).
    \end{equation*}

This implies that $\mathcal{MT}(n,\alpha_{n,\beta},\beta)$ is attained by $v_0$. To prove that $\|v_0\|_{w_\beta}=1$, observe that $v_j\rightharpoonup v_0$ in $X^{1,k+1}_{1,w_\beta}$ and that $J_{n,\beta}(v_0/\|v_0\|_{w_\beta})>J_{n,\beta}(v_0)$ if $\|v_0\|_{w_\beta}<1$. This concludes the proof of the proposition.
    \end{proof}
\section{Existence of the \texorpdfstring{$k$}{}-admissible maximizers}\label{section6}
This section aims to use the extremal function $v\in X^{1,k+1}_{1,w_{\beta}}$ for $\mathcal{MT}(n,\alpha_{n,\beta},\beta)$, ensured by Proposition~\ref{extremalX}, to build an extremal function $u\in\Phi^{k}_{0,\mathrm{rad}}(B, w_{\beta})$ for the supremum $\mathrm{MT}(n,\alpha_{n,\beta},\beta)$ and then we complete the prove of Theorem~\ref{thm-extremal}.
 
\subsection{Regularity for auxiliary maximizers} 
Next, we prove that if $v\in X^{1,k+1}_{1,w_{\beta}}$ is a maximizers $\mathcal{MT}(n,\alpha_{n,\beta},\beta)$ then $v\in C^{2}[0,1]$. Taking into account Corollary~\ref{corollary-radial}, it is reasonable to be concerned with the behavior of $v$ near $r = 0$. The next result provides a good insight into this matter.

\begin{lemma}\label{0-beha} Let $v\in X^{1,k+1}_{1,w_{\beta}}$ with $k=n/2$ be a non-increasing function. Then, for any $\vartheta,\alpha>0$ and $0\leq \beta<1$ it holds
\begin{equation*}
\lim_{r\to 0^+} r^{\vartheta}|v(r)|^{\gamma_{n,\beta}-1}\mathrm{e}^{\alpha|v(r)|^{\gamma_{n,\beta}}}=0,\;\;\mbox{where}\;\; \gamma_{n,\beta}=\frac{1}{1-\beta}\frac{n+2}{n}
\end{equation*}
\end{lemma}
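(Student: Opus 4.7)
\textbf{Proof plan for Lemma \ref{0-beha}.} The strategy is a Moser-type localization at the origin exploiting the absolute continuity of the integral $\int_0^{r_0} r^{n-k} |v'|^{k+1} w_\beta \, \mathrm d r$ as $r_0 \to 0^+$. Concretely, since $v \in X^{1,k+1}_{1,w_\beta}$, for every $\epsilon > 0$ one can pick $r_0=r_0(\epsilon) \in (0,1)$ small enough that
\begin{equation*}
c_n\int_{0}^{r_0} s^{n-k} |v'(s)|^{k+1} w_\beta(s)\,\mathrm d s < \epsilon^{\frac{n+2}{2}}.
\end{equation*}
Applying Lemma \ref{r-estimate} on the interval $(r, r_0)$ for $0<r<r_0$, together with the non-increasing hypothesis (which gives $v(r)=|v(r)| \geq v(r_0) \geq 0$), I would first obtain
\begin{equation*}
|v(r)| \leq v(r_0) + A_{w_\beta}^{\frac{n}{n+2}}(r;r_0)\, \epsilon.
\end{equation*}
An explicit computation of $A_{w_\beta}(r;r_0)$ (analogous to Corollary \ref{corollary-radial}(a)) yields
\begin{equation*}
A_{w_\beta}(r;r_0) = \frac{1}{c_n^{2/n}(1-\beta)}\Big[(\ln\tfrac{1}{r})^{1-\beta} - (\ln\tfrac{1}{r_0})^{1-\beta}\Big] \leq \frac{1}{c_n^{2/n}(1-\beta)}(\ln\tfrac{1}{r})^{1-\beta}.
\end{equation*}

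The next step is the scale-critical algebraic identity
\begin{equation*}
\frac{n}{n+2}\cdot \gamma_{n,\beta} \cdot (1-\beta) = 1,
\end{equation*}
which is the entire reason the argument works. Combining this identity with the elementary inequality $(a+b)^{\gamma_{n,\beta}} \leq (1+\delta)\, b^{\gamma_{n,\beta}} + C_\delta\, a^{\gamma_{n,\beta}}$ (valid for any $\delta>0$), I would raise the pointwise bound above to the power $\gamma_{n,\beta}$ to get
\begin{equation*}
|v(r)|^{\gamma_{n,\beta}} \leq \frac{(1+\delta)\,\epsilon^{\gamma_{n,\beta}}}{\big[c_n^{2/n}(1-\beta)\big]^{\frac{1}{1-\beta}}} \ln\frac{1}{r} + K,
\end{equation*}
where $K = K(\delta, \beta, v, r_0)$ is an absolute constant. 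Hence
\begin{equation*}
\mathrm{e}^{\alpha|v(r)|^{\gamma_{n,\beta}}} \leq \mathrm{e}^{\alpha K}\, r^{-\alpha(1+\delta)\epsilon^{\gamma_{n,\beta}}[c_n^{2/n}(1-\beta)]^{-1/(1-\beta)}}.
\end{equation*}

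Finally, I would fix $\delta$ and then pick $\epsilon$ so small that the exponent of $r$ in
\begin{equation*}
r^{\vartheta}\, \mathrm{e}^{\alpha |v(r)|^{\gamma_{n,\beta}}} \leq \mathrm{e}^{\alpha K}\, r^{\vartheta - \alpha(1+\delta)\epsilon^{\gamma_{n,\beta}}[c_n^{2/n}(1-\beta)]^{-1/(1-\beta)}}
\end{equation*}
is strictly positive; the right-hand side then vanishes as $r\to 0^+$. The polynomial factor $|v(r)|^{\gamma_{n,\beta}-1}$ is controlled via Corollary \ref{corollary-radial}(a) by a power of $\ln(1/r)$, which is subpolynomial and thus readily absorbed into an arbitrarily small extra power of $r$.

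The main obstacle, and the substantive content of the lemma, is the critical-scaling observation $\frac{n\gamma_{n,\beta}}{n+2}(1-\beta) = 1$: this is precisely what turns the otherwise uncontrolled quantity $\exp(\alpha|v|^{\gamma_{n,\beta}})$ into a power of $r^{-1}$ whose exponent is proportional to $\epsilon^{\gamma_{n,\beta}}$, and therefore can be made smaller than $\vartheta$ by the tail-smallness afforded by membership in $X^{1,k+1}_{1,w_\beta}$. Without this identity, one would only get that $r^{\vartheta}\mathrm{e}^{\alpha|v|^{\gamma_{n,\beta}}}$ is bounded (for some $\vartheta$) rather than vanishing in the limit.
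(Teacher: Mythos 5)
Your proof is correct but takes a genuinely different route from the paper's. The paper establishes the weak embedding $\|v'\|_{L^m_\vartheta}\leq C\|v\|_{w_\beta}$, then defines $\varphi(r)=|v(r)|^{\gamma_{n,\beta}-1}\mathrm{e}^{\alpha|v(r)|^{\gamma_{n,\beta}}}$, shows $\varphi'\in L^1_\vartheta(0,1/2)$ by estimating $|\varphi'|$ via H\"older's inequality together with the integrability fact $\int_0^1 r^\vartheta \mathrm{e}^{\alpha p|v|^{\gamma_{n,\beta}}}\,\mathrm{d}r<\infty$, and finally extracts the conclusion through a fundamental-theorem-of-calculus argument combined with a careful $s$-and-$\delta$ splitting. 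You instead work directly at the level of the pointwise bound: absolute continuity of $\int_0^{r_0}s^{n-k}|v'|^{k+1}w_\beta\,\mathrm{d}s$ gives tail-smallness, Lemma \ref{r-estimate} then gives $|v(r)|\lesssim v(r_0)+\epsilon\big(\ln\frac{1}{r}\big)^{(1-\beta)\frac{n}{n+2}}$, and after raising to the power $\gamma_{n,\beta}$ — using the scale-critical identity $\frac{n}{n+2}\gamma_{n,\beta}(1-\beta)=1$ and the convexity split $(a+b)^{\gamma}\leq(1+\delta)b^{\gamma}+C_\delta a^{\gamma}$ — you get $\alpha|v(r)|^{\gamma_{n,\beta}}\leq \tau\ln\frac{1}{r}+K$ with $\tau=\alpha(1+\delta)\epsilon^{\gamma_{n,\beta}}n/\alpha_{n,\beta}$ as small as desired. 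Taking $\tau<\vartheta$ and absorbing the subpolynomial factor $|v|^{\gamma_{n,\beta}-1}$ into an arbitrarily small extra power of $r$ gives the conclusion. This is essentially the Moser localization the paper already uses in the sufficiency part of Proposition \ref{prop-thm1}, applied here to a tail at the origin; it is shorter and more transparent than the paper's derivative-integrability route, though the paper's argument has the side benefit of producing the estimate $\varphi'\in L^1_\vartheta$ (which is in fact used again in Lemma \ref{vc2}, via the L'Hospital step based on \eqref{gradiente}). Both proofs are valid; yours is the more elementary for the stated limit.
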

\begin{proof}
We claim that there exist $m>1$ and $C>0$ depending only on $m, n, \beta$ and $\vartheta$ such that 
\begin{equation}\label{emb1}
\|v^{\prime}\|_{L^{m}_{\vartheta}}\leq C\|v\|_{w_{\beta}}.
\end{equation}
Indeed, taking $m, q>1$ such that
$(\vartheta+1)q>k+1\ge mq$, the H\"{o}lder inequality yields 
\begin{equation}\nonumber
\begin{aligned}
\int_{0}^{1}r^{n-k}|v^{\prime}|^{mq}w_{\beta}(r)\,\mathrm{d} r&\leq \left(\int^{1}_{0}r^{n-k}|v^{\prime}|^{k+1}w_{\beta}(r)\,\mathrm{d} r\right)^{\frac{mq}{k+1}}\left(\int^{1}_{0}r^{n-k}w_{\beta}(r)\,\mathrm{d} r\right)^{\frac{\frac{k+1}{mq}-1}{\frac{k+1}{mq}}}\\
&= C\left(\int^{1}_{0}r^{n-k}w_{\beta}(r)\,\mathrm{d} r\right)^{\frac{\frac{k+1}{mq}-1}{\frac{k+1}{mq}}}\|v\|^{mq}_{w_{\beta}},
\end{aligned}
\end{equation}
with $C=c_n^{-mq/(k+1)}$.
In addition, using the above estimate and our choices for $m$ and $q$, we can write (recall $k=n/2$ and $(\vartheta-\frac{k}{q})\frac{q}{q-1}>-1$)
 \begin{align*}
\int_{0}^1 r^{\vartheta}|v^{\prime}|^{m}\mathrm{d} r&=\int_{0}^1\big( 
|v^{\prime}|^{m}r^{\frac{n-k}{q}}w^{\frac{1}{q}}_{\beta}(r)\big)\big(r^{\vartheta-\frac{k}{q}}w^{-\frac{1}{q}}_{\beta}(r)\big)\mathrm{d}
r\\
&\leq \left(\int_{0}^{1}r^{n-k}|v^{\prime}|^{mq}w_{\beta}(r)\mathrm{d}
r\right)^{\frac{1}{q}}\left(\int_{0}^{1}r^{(\vartheta-\frac{k}{q})\frac{q}{q-1}}w^{-\frac{1}{q-1}}_{\beta}(r)\mathrm{d}
r\right)^{\frac{q-1}{q}}\\
&\leq  C\|v\|^{m}_{w_{\beta}},
 \end{align*}
where we have use that
\begin{equation}\nonumber
    \begin{aligned}
      \int_{0}^{1}r^{(\vartheta-\frac{k}{q})\frac{q}{q-1}}w^{-\frac{1}{q-1}}_{\beta}(r)\mathrm{d}
r& =\int_{0}^{1}r^{(\vartheta-\frac{k}{q})\frac{q}{q-1}}(-\ln r)^{-\frac{\beta n}{2(q-1)}}\mathrm{d}
r  \\
&= \int_{0}^{\infty}\mathrm{e}^{-[(\vartheta-\frac{k}{q})\frac{q}{q-1}+1]t} t^{-\frac{\beta n}{2(q-1)}}\mathrm{d}t\\
&= \left[\left(\vartheta-\frac{k}{q}\right)\frac{q}{q-1}+1\right]^{\frac{\beta n}{2(q-1)}-1}\int_{0}^{\infty}x^{-\frac{\beta n}{2(q-1)}}\mathrm{e}^{-x}\mathrm{d}x\\
&=\left[\left(\vartheta-\frac{k}{q}\right)\frac{q}{q-1}+1\right]^{\frac{\beta n}{2(q-1)}-1}\Gamma\Big(1-\frac{\beta n}{2(q-1)}\Big)
    \end{aligned}
\end{equation}
provided that $q$ is chosen large such that $1-\frac{\beta n}{2(q-1)}>0$. This proves \eqref{emb1}. Now, we set 
\begin{equation*}
\varphi(r)=|v(r)|^{\gamma_{n,\beta}-1}\mathrm{e}^{\alpha|v(r)|^{\gamma_{n,\beta}}}.
\end{equation*}
It remains to show that
\begin{equation*}
\lim_{r\rightarrow 0^{+}}r^{\vartheta}\varphi(r)=0.
\end{equation*}
Since $v$ is a non-increasing function, without loss of generality we can assume $v>0$ in $(0,1/2)$ and $\lim_{r\rightarrow 0^{+}}v(r)=\infty$. Hence, there exists $C=C(n, \alpha,\beta)>0$ such that (use that $1/v$ is bounded near $0$)
\begin{eqnarray}
|\varphi^{\prime}(r)|&\leq &(\gamma_{n,\beta}-1)|v(r)|^{\gamma_{n,\beta}-2}
|v^{\prime}(r)| \mathrm{e}^{\alpha |v(r)|^{\gamma_{n,\beta}}}+\alpha\gamma_{n,\beta}|v(r)|^{2(\gamma_{n,\beta}-1)}|v^{\prime}(r)|
\mathrm{e}^{\alpha |v(r)|^{\gamma_{n,\beta}}}\nonumber\\
&\leq & C\left(
|v(r)|^{\gamma_{n,\beta}}|v^{\prime}(r)| \mathrm{e}^{\alpha |v(r)|^{\gamma_{n,\beta}}}+ |v(r)|^{2\gamma_{n,\beta}}|v^{\prime}(r)|
\mathrm{e}^{\alpha |v(r)|^{\gamma_{n,\beta}}}\right)\label{11fev201301},
\end{eqnarray}
for any $r\in(0,1/2)$. Now, we claim that 
\begin{equation}\label{phi-claim}
\varphi^{\prime}\in L^{1}_{\vartheta}(1,1/2).
\end{equation}
Firstly, by arguing as in steps \eqref{changeV}, \eqref{portal-norm}, \eqref{portal-funcional}, \eqref{psi-Int} and \eqref{Holder-radial} in the proof of Proposition~\ref{prop-thm1} (see also Theorem~1.1 in the recent paper \cite{XueZhangZhu2025}), we can see that 
\begin{equation}\label{Log-tmG}
    \int_{0}^{1}r^{\vartheta}\mathrm{e}^{\alpha p|v|^{\gamma_{n,\beta}}}\mathrm{d}r<\infty
\end{equation}
for any $p>1$ and $\alpha, \vartheta>0$.
Thus, for $m>1$ such as in \eqref{emb1}, $p>1$ and $q>1$ (large) such that
\begin{equation*}
\frac{2\gamma_{n,\beta}}{q}+\frac{1}{m}+\frac{1}{p}\leq 1
\end{equation*}
the H\"{o}lder inequality yields
\begin{align*}\nonumber
& \int_{0}^{1}r^{\vartheta}|v(r)|^{2\gamma_{n,\beta}}|v^{\prime}(r)|
\mathrm{e}^{\alpha |v(r)|^{\gamma_{n,\beta}}}\mathrm{d}r\\
& \le\left(\int_{0}^{1}r^{\vartheta}|v|^{q}\mathrm{d}r\right)^{\frac{2\gamma_{n,\beta}}{q}}\left(\int_{0}^{1}r^{\vartheta}|v^{\prime}|^{m}\mathrm{d}r\right)^{\frac{1}{m}}\left(\int_{0}^{1}r^{\vartheta}\mathrm{e}^{\alpha p|v|^{\gamma_{n,\beta}}}\mathrm{d}r\right)^{\frac{1}{p}}.
\end{align*}
The above estimate combined with \eqref{emb1}, Corollary~\ref{thm0}-(ii) and \eqref{Log-tmG} gives
\begin{equation}\label{phi-part1}
|v(r)|^{2\gamma_{n,\beta}}|v^{\prime}(r)|
\mathrm{e}^{\alpha |v(r)|^{\gamma_{n,\beta}}}\in L^{1}_{\vartheta}(0,1).
\end{equation}
Analogously, we can show that 
\begin{equation}\label{phi-part2}
|v(r)|^{\gamma_{n,\beta}}|v^{\prime}(r)|
\mathrm{e}^{\alpha |v(r)|^{\gamma_{n,\beta}}}\in L^{1}_{\vartheta}(0,1).
\end{equation}
Hence, combining \eqref{11fev201301}, \eqref{phi-part1}, and \eqref{phi-part2}, we obtain \eqref{phi-claim}. For $0<r<s<1/2$ we have 
\begin{equation}\nonumber
|\varphi(r)|\leq \varphi\Big(\frac{1}{2}\Big)+\int_{r}^{s}|\varphi^{\prime}(\tau)|\mathrm{d}\tau+\int_{s}^{\frac{1}{2}}|\varphi^{\prime}(\tau)|\mathrm{d}\tau.
\end{equation}
Therefore, 
\begin{equation}\label{phi-calculusF}
r^{\vartheta}|\varphi(r)|\leq r^{\vartheta}\varphi\Big(\frac{1}{2}\Big)+\int_{r}^{s}\tau^{\vartheta}|\varphi^{\prime}(\tau)|\mathrm{d}\tau+\frac{r^{\vartheta}}{s^{\vartheta}}\int_{s}^{\frac{1}{2}}\tau^{\vartheta}|\varphi^{\prime}(\tau)|\mathrm{d}\tau.
\end{equation}
From \eqref{phi-claim}, for each $\epsilon>0$ there exists $s>0$ (small) such that
\begin{equation*}\int_{r}^{s}\tau^{\vartheta}|\varphi^{\prime}(\tau)|\mathrm{d}\tau<\frac{\epsilon}{3}.
\end{equation*}
Also, there exists $0<\delta< s$ such that
\begin{equation*}\delta^{\vartheta}\left|\varphi\Big(\frac{1}{2}\Big)\right|<\frac{\epsilon}{3}\;\;\;\mbox{and}\;\;\;
\frac{\delta^{\vartheta}}{s^{\vartheta}}\int_{s}^{\frac{1}{2}}\tau^{\vartheta}|\varphi^{\prime}(\tau)|\mathrm{d}\tau<\frac{\epsilon}{3}.\end{equation*}
Hence, using \eqref{phi-calculusF}, we conclude that 
\begin{equation*}
r^{\vartheta}|\varphi(r)|<\epsilon, \;\;\;\mbox{for}\;\;\; 0<r<\delta
\end{equation*}
which completes the proof.
\end{proof}
\begin{lemma}\label{vc2}
Let $v\in X^{1,k+1}_{1,w_{\beta}}$ with $k=n/2$ be a non-negative maximizer for $\mathcal{MT}(n,\alpha_{n,\beta},\beta)$. Then, $v$ is a decreasing function and $v\in C^{2}[0,1]$.
\end{lemma}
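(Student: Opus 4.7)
The plan is to derive the Euler--Lagrange equation for $v$, use it together with the finite-energy condition to force monotonicity, and then bootstrap regularity up to the endpoint $r=0$ via Lemma \ref{0-beha}. Nontriviality of $v$ follows from $\mathcal{MT}(n,\alpha_{n,\beta},\beta) > 1/n = J_{n,\beta}(0)$, and standard variational arguments on the sphere $\{\|\cdot\|_{w_\beta}=1\}$ produce a Lagrange multiplier $\lambda > 0$ such that
$$-\bigl(r^{n-k}w_\beta(r)|v'|^{k-1}v'\bigr)' \;=\; \lambda\, r^{n-1} v^{\gamma_{n,\beta}-1}\, \mathrm{e}^{\alpha_{n,\beta} v^{\gamma_{n,\beta}}}, \quad r\in(0,1), \; v(1)=0.$$

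For the monotonicity, set $\Phi(r) = r^{n-k}w_\beta(r)|v'|^{k-1}v'$. The EL equation and $v\ge 0$ force $\Phi'\le 0$, so $\Phi$ is non-increasing and has a limit $c\in[-\infty,+\infty]$ at $0^+$. I would rule out $c\ne 0$ via the identity $r^{n-k}w_\beta |v'|^{k+1} = |\Phi|^{(k+1)/k}(r^{n-k}w_\beta)^{-1/k}$: for $n=2k$ and $\beta<1$, this would force
$$\int_0^{1/2} r^{n-k}w_\beta|v'|^{k+1}\,\mathrm{d} r \;\gtrsim\; |c|^{(k+1)/k}\int_0^{1/2}r^{-1}(\ln 1/r)^{-\beta}\,\mathrm{d} r \;=\; \infty,$$
contradicting $v\in X^{1,k+1}_{1,w_\beta}$. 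Hence $\Phi(0^+)=0$, so $\Phi\le 0$ on $(0,1)$, and $v'\le 0$ (strictly where $v>0$ by the EL equation); in particular $v(0)>0$, otherwise $v\equiv 0$.

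Integrating EL with $\Phi(0^+)=0$ yields
$$(-v'(r))^k \;=\; \frac{\lambda}{r^{n-k}w_\beta(r)}\int_0^r s^{n-1}\, v(s)^{\gamma_{n,\beta}-1}\, \mathrm{e}^{\alpha_{n,\beta}\, v(s)^{\gamma_{n,\beta}}}\,\mathrm{d} s.$$
By Lemma \ref{0-beha} with arbitrarily small $\vartheta > 0$, the integrand is $o(s^{n-1-\vartheta})$, hence the numerator is $o(r^{n-\vartheta})$ and the quotient is $o(r^{k-\vartheta})$; taking $k$-th roots, $-v'(r) = o(r^{1-\vartheta/k})\to 0$. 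Therefore $v'$ is continuous at $r=0$ with $v'(0)=0$, and $v(r)=\int_r^1 (-v'(s))\,\mathrm{d} s$ extends continuously to $r=0$, so $v$ is bounded and $v\in C^1$ near the origin.

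For the $C^2$ regularity, differentiating the integral form in the interior gives $v \in C^2(0,1)$. At $r=0$, the now-established boundedness of $v$ produces the clean asymptotic expansion $N(r) = \lambda v(0)^{\gamma_{n,\beta}-1}\mathrm{e}^{\alpha_{n,\beta} v(0)^{\gamma_{n,\beta}}} r^n/n + o(r^n)$; combined with $D(r)=r^{n-k}w_\beta(r)$, a careful leading-order cancellation in $(N/D)^{1/k}$ and its derivative produces a finite limit for $v''(r)$ as $r\to 0^+$, upgrading $v$ to $C^2$ at the origin. The main technical obstacle is precisely this boundary analysis at $r=0$: the logarithmic weight $w_\beta$ (appearing both in $D$ and in its derivative) must be matched against the leading $r^n$ behavior of $N$, and this delicate interaction hinges on both the boundedness of $v$ established in the previous step and the borderline identity $n=2k$ together with $\beta<1$, just as in the monotonicity argument.
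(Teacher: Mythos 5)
Your overall outline (Lagrange multipliers $\to$ integral identity for $(-v')^k$ $\to$ $C^1$ at the origin via Lemma~\ref{0-beha} $\to$ $C^2$ at the origin via asymptotics) matches the paper's proof, and the argument is essentially correct. There is one genuine technical difference worth noting: the paper obtains the integral identity
\[
c_n r^{n-k}w_\beta(r)(-v'(r))^k=\lambda\int_0^r s^{n-1}|v|^{\gamma_{n,\beta}-1}\mathrm{e}^{\alpha_{n,\beta}|v|^{\gamma_{n,\beta}}}\,\mathrm{d}s
\]
\emph{directly} from the weak Euler--Lagrange equation by plugging in the piecewise-linear test functions $h_\rho$ (constant $=1$ on $[0,r]$, ramping down to $0$ on $[r,r+\rho]$, then $0$), which automatically builds in the vanishing of the boundary term at $r=0$. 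You instead first upgrade the weak equation to a pointwise ODE for the flux $\Phi=r^{n-k}w_\beta|v'|^{k-1}v'$ (which requires, and you should say so, that $\Phi$ coincides a.e.\ with an absolutely continuous function because the right-hand side is in $L^1(0,1)$), then argue $\Phi(0^+)=0$ by the energy contradiction $\int_0^{1/2}|\Phi|^{(k+1)/k}/\bigl(r(\ln\frac1r)^\beta\bigr)\,\mathrm{d}r=\infty$ for $\beta<1$. Both routes are valid; the test-function device is shorter, while your flux argument makes the role of the borderline condition $n=2k$, $\beta<1$ explicit. A small imprecision: you write ``$v(0)>0$, otherwise $v\equiv0$'' before having shown $v$ is bounded near $0$; the clean statement at that stage is $\lim_{r\to 0^+}v(r)\in(0,\infty]$, with finiteness only following after the $o(r^{1-\vartheta/k})$ bound on $-v'$. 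Finally, your $C^2$-at-the-origin step is stated as ``a careful leading-order cancellation,'' which is more of a promissory note than a proof. The paper's execution is clean and worth recording: rewrite the ODE as
\[
v''(r)+\beta\Bigl(-\tfrac{v'(r)}{r}\Bigr)\tfrac{1}{-\ln r}=\tfrac{n-k}{k}\Bigl(-\tfrac{v'(r)}{r}\Bigr)-\tfrac{\lambda}{kc_n}\Bigl(-\tfrac{v'(r)}{r}\Bigr)\tfrac{|v|^{\gamma_{n,\beta}-1}\mathrm{e}^{\alpha_{n,\beta}|v|^{\gamma_{n,\beta}}}}{(-v'/r)^k w_\beta(r)},
\]
then use L'H\^opital on the integral form to show $\lim_{r\to0}(-v'/r)^k w_\beta(r)$ equals the strictly positive constant $\frac{\lambda}{nc_n}|v(0)|^{\gamma_{n,\beta}-1}\mathrm{e}^{\alpha_{n,\beta}|v(0)|^{\gamma_{n,\beta}}}$, which (since $w_\beta\to\infty$) forces $-v'/r\to 0$ and then, term by term, $v''(r)\to 0$. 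Your asymptotic expansion $N(r)\sim A r^n/n$, $D(r)=r^{n-k}w_\beta$ produces the same limit (in fact $v''(0)=0$, not merely a finite limit), so once you carry out the differentiation of $(N/D)^{1/k}$ explicitly your sketch closes.
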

\begin{proof}
Let $I:X^{1,k+1}_{1,w_{\beta}}\to \mathbb{R}$ be the logarithmic Trudinger-Moser functional 
\begin{equation*}
    I(v)=\int_{0}^{1} r^{n-1}\mathrm{e}^{\alpha_{n,\beta}|v|^{\gamma_{n,\beta}}}\mathrm{d} r.
\end{equation*}
A standard argument shows that $I \in C^1 $ and such that 
\begin{equation}\nonumber
  I^{\prime}(v).h =   \alpha_{n,\beta}\gamma_{n,\beta}\int_{0}^{1}r^{n-1}|v|^{\gamma_{n,\beta}-2}\mathrm{e}^{\alpha_{n,\beta}|v|^{\gamma_{n,\beta}}} v h dr.
\end{equation}
Then, if $v\in X^{1,k+1}_{1,w_{\beta}}$ is a non-negative maximizer for $\mathcal{MT}(n,\alpha_{n,\beta},\beta)$, the Lagrange multipliers theorem yields, for any $h\in X^{1,k+1}_{1,w_{\beta}}$
\begin{equation}\label{weaksolution}
c_{n}\int_{0}^{1}r^{n-k}|v^{\prime}|^{k-1}v^{\prime}h^{\prime}w_{\beta}\,\mathrm{d}
r=\lambda \int_{0}^{1}r^{n-1}|v|^{\gamma_{n,\beta}-1}\mathrm{e}^{\alpha_{n,\beta}|v|^{\gamma_{n,\beta}}}h\, dr
\end{equation}
where
\begin{equation*}
\lambda=\Big(\int_{0}^{1}r^{n-1}|v|^{\gamma_{n,\beta}}\mathrm{e}^{\alpha_{n,\beta}|v|^{\gamma_{n,\beta}}}\, dr\Big)^{-1}.
\end{equation*}
Following \cite{zbMATH01127685}, for each $r\in(0,1)$ and $\rho>0$ let
$h_{\rho}\in X^{1,k+1}_{1,w_{\beta}}$ be given by
\begin{equation*}
h_{\rho}(s)=\left\{\begin{aligned}&\;1 &\mbox{if}\quad &0\leq s\leq r,&\\
&\;1+\frac{1}{\rho}(r-s)&\mbox{if}\quad &r\leq s\leq r+\rho,&\\
&\;0 &\mbox{if}\quad &s\ge r+\rho.&
\end{aligned}\right.
\end{equation*}
By using $h_{\rho}$ in \eqref{weaksolution} and letting
$\rho\rightarrow 0$, we deduce
\begin{equation}\label{integralsolution}
c_{n}r^{n-k}(-|v^{\prime}|^{k-1}v^{\prime})w_{\beta}(r)=\lambda\int_{0}^{r}s^{n-1}|v|^{\gamma_{n,\beta}-1}\mathrm{e}^{\alpha_{n,\beta}|v|^{\gamma_{n,\beta}}}\,\mathrm{d}s,\;\;\mbox{a.e
on}\;\; (0,1).
\end{equation}
It follows that $v$ is a decreasing function such that $v\in C^{1}(0,1]$. In addition, from \eqref{integralsolution}
\begin{equation}\label{gradiente}
(-v^{\prime}(r))^{k}w_{\beta}(r)=\frac{\lambda}{c_nr^{n-k}}\int_{0}^{r}s^{n-1}|v|^{\gamma_{n,\beta}-1}\mathrm{e}^{\alpha_{n,\beta}|v|^{\gamma_{n,\beta}}}\,\mathrm{d}s.
\end{equation}
Hence, from Lemma~\ref{0-beha} and L'Hospital's rule we get 
\begin{equation*}
\lim_{r\to 0}(-v^{\prime}(r))^{k}\frac{w_{\beta}(r)}{r^{\sigma}}= 0,\;\;\;\mbox{ for all }\;\; \sigma<k.
\end{equation*}
Thus, since $w_{\beta}(r)\to\infty$ as $r\to 0^{+}$, we obtain $\lim_{r\rightarrow 0^{+}}v^{\prime}(r)=0$ and then $v\in C^{1}[0,1]$. In addition, taking into account that we already know $v \in C^{1}[0,1]$, the identity \eqref{gradiente} gives 
\begin{equation}\label{gradienteRWK}
\begin{aligned}
\lim_{r\to 0}\Big(-\frac{v^{\prime}(r)}{r}\Big)^{k}w_{\beta}(r)&=\frac{\lambda}{c_n}\lim_{r\to 0}\frac{1}{r^{n}}\int_{0}^{r}s^{n-1}|v|^{\gamma_{n,\beta}-1}\mathrm{e}^{\alpha_{n,\beta}|v|^{\gamma_{n,\beta}}}\,\mathrm{d}s\\
&=\frac{\lambda}{nc_n}|v(0)|^{\gamma_{n,\beta}}\mathrm{e}^{\alpha_{n,\beta}|v(0)|^{\gamma_{n,\beta}}}>0.
\end{aligned}
\end{equation}
In particular, we also have 
\begin{equation}\label{gradienteRR00}
\begin{aligned}
\lim_{r\to 0}\Big(-\frac{v^{\prime}(r)}{r}\Big)=0.
\end{aligned}
\end{equation}
Now, by using \eqref{gradiente} again, we get $v\in C^{2}(0,1]$ and we can write
\begin{equation}\nonumber
\begin{aligned}
-k(-v^{\prime}(r))^{k-1}v^{\prime\prime}(r)w_{\beta}(r) -\frac{n\beta}{2}(-v^{\prime}(r))^{k}\frac{w_{\beta}(r)}{r(-\ln r)} &=-\frac{n-k}{r}(-v^{\prime}(r))^{k}w_{\beta}(r)\\
& + \frac{\lambda}{c_n}\frac{1}{r^{1-k}}|v(r)|^{\gamma_{n,\beta}-1}\mathrm{e}^{\alpha_{n,\beta}|v(r)|^{\gamma_{n,\beta}}}
\end{aligned}
\end{equation}
which can be rewritten as

\begin{equation}\nonumber
\begin{aligned}
v^{\prime\prime}(r)+\beta\Big(-\frac{v^{\prime}(r)}{r}\Big)\frac{1}{(-\ln r)} &=\frac{n-k}{k}\Big(-\frac{v^{\prime}(r)}{r}\Big)\\
& - \frac{\lambda}{kc_n}\Big(-\frac{v^{\prime}(r)}{r}\Big)\frac{1}{\Big(-\frac{v^{\prime}(r)}{r}\Big)^{k}w_{\beta}(r)}|v(r)|^{\gamma_{n,\beta}-1}\mathrm{e}^{\alpha_{n,\beta}|v(r)|^{\gamma_{n,\beta}}}.
\end{aligned}
\end{equation}
By using \eqref{gradienteRWK} and \eqref{gradienteRR00}, we obtain $\lim_{r\to 0}v^{\prime\prime}(r)=0$ and consequently we conclude that
$v\in C^2[0,1]$ as desired.
\end{proof}
\subsection{Constructing \texorpdfstring{$k$}{}-admissible maximizers: Proof of Theorem~\ref{thm-extremal}}
Let $v\in X^{1,k+1}_{1,w_{\beta}}$ be a non-negative extremal function for $\mathcal{MT}(n,\alpha_{n,\beta},\beta)$, for $\beta>0$ small enough, ensured by Proposition~\ref{extremalX}. Now, we consider $u: \overline{B}\rightarrow\mathbb{R}$ given by
\begin{equation}\label{u0difinition}
u(x)=-v(r),\;\; \mbox{with}\;\; r=|x|, \;\; \mbox{for all}\;\; x\in \overline{B}.
\end{equation}
\begin{lemma}\label{k-admi} Let $u: \overline{B}\rightarrow\mathbb{R}$ be defined by \eqref{u0difinition}. Then $u\in \Phi^{k}_{0,\mathrm{rad}}(B, w_{\beta})$ and it is an extremal function for $\mathrm{MT}(n,\alpha_{n,\beta},\beta)$.
\end{lemma}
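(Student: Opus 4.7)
\textbf{Proof plan for Lemma~\ref{k-admi}.} The plan is to leverage the regularity of $v$ obtained in Lemma~\ref{vc2} together with the Euler--Lagrange equation \eqref{gradiente} to check that $u$ is $k$-admissible, then use the bridge identities \eqref{gate-norm} and \eqref{gate-fuctional} to transfer the extremality from $v$ to $u$. I expect the main technical point to be the verification of the sign condition $S_j(D^2u)\ge0$ for every $1\le j\le k$; everything else is either definitional or follows from the change of variables already set up.

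\textbf{Step 1: Regularity and boundary condition.} By Lemma~\ref{vc2}, $v\in C^2[0,1]$ is strictly decreasing with $v(1)=0$. Hence $u(x)=-v(|x|)$ is radial, lies in $C^2(\overline{B})$, and satisfies $u=0$ on $\partial B$.

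\textbf{Step 2: $k$-admissibility via the Euler--Lagrange equation.} For a radial $C^2$ function $u(x)=f(|x|)$ the eigenvalues of $D^2u$ are $f''(r)$ and $f'(r)/r$ (with multiplicity $n-1$), so that (up to a positive constant depending on $n$ and $j$)
\begin{equation*}
S_j(D^2u) = C_{n,j}\,\frac{(f'(r))^{j-1}}{r^{j-1}}\!\left(f''(r)+\frac{n-j}{j}\frac{f'(r)}{r}\right),
\end{equation*}
see \cite[Sec.~3]{zbMATH06712355}. With $f=-v$ and $-v'>0$, this means $S_j(D^2u)\ge0$ is equivalent to the scalar inequality
\begin{equation*}
v''(r)+\frac{n-j}{j}\frac{v'(r)}{r}\le0,\qquad 0<r\le1.
\end{equation*}
Starting from identity \eqref{gradiente}, differentiating, and using $w_\beta'(r)=-\tfrac{\beta n}{2r(-\ln r)}w_\beta(r)$ yields, after dividing by $-k(-v')^{k-1}w_\beta>0$, the Euler--Lagrange equation
\begin{equation*}
v''+\frac{n-k}{k}\frac{v'}{r}=-\frac{n\beta}{2k}\frac{-v'}{r(-\ln r)}-\frac{\lambda\,r^{k-1}}{kc_n(-v')^{k-1}w_\beta(r)}\,|v|^{\gamma_{n,\beta}-1}\mathrm{e}^{\alpha_{n,\beta}|v|^{\gamma_{n,\beta}}}\le0,
\end{equation*}
since every factor on the right has a definite sign. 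This establishes the sign condition for $j=k$. For $1\le j<k$ we have $\frac{n-j}{j}\ge\frac{n-k}{k}$ (the function $j\mapsto\frac{n-j}{j}$ is decreasing), and since $v'\le0$ we obtain
\begin{equation*}
v''+\frac{n-j}{j}\frac{v'}{r}=\left[v''+\frac{n-k}{k}\frac{v'}{r}\right]+\left(\frac{n-j}{j}-\frac{n-k}{k}\right)\frac{v'}{r}\le0,
\end{equation*}
which is the desired inequality for all $j=1,\ldots,k$. Therefore $u$ is a $k$-admissible radial function with $u|_{\partial B}=0$.

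\textbf{Step 3: Inclusion in the weighted space and extremality.} Since $u\in\Phi^{k}_{0,\mathrm{rad}}(B)$ and, by \eqref{gate-norm}, $\|u\|_{\Phi,w_\beta}=\|v\|_{w_\beta}=1<\infty$, the definition \eqref{defPhi} gives $u\in\Phi^{k}_{0,\mathrm{rad}}(B,w_\beta)$ and $u\in\Sigma$. On the other hand, every $\widetilde u\in\Sigma$ induces $\widetilde v(r)=\widetilde u(|x|)\in\widetilde\Sigma$ with $\|\widetilde v\|_{w_\beta}=\|\widetilde u\|_{\Phi,w_\beta}\le1$, and by \eqref{gate-fuctional}
\begin{equation*}
\int_B\mathrm{e}^{\alpha_{n,\beta}|\widetilde u|^{\gamma_{n,\beta}}}\,\mathrm{d} x=\omega_{n-1}\int_0^1 r^{n-1}\mathrm{e}^{\alpha_{n,\beta}|\widetilde v|^{\gamma_{n,\beta}}}\,\mathrm{d} r\le\omega_{n-1}\mathcal{MT}(n,\alpha_{n,\beta},\beta).
\end{equation*}
Taking the supremum gives $\mathrm{MT}(n,\alpha_{n,\beta},\beta)\le\omega_{n-1}\mathcal{MT}(n,\alpha_{n,\beta},\beta)$, and applying \eqref{gate-fuctional} to the admissible function $u$ itself, together with the fact that $v$ is an extremal for $\mathcal{MT}(n,\alpha_{n,\beta},\beta)$, produces the matching lower bound. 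Consequently $\mathrm{MT}(n,\alpha_{n,\beta},\beta)=\omega_{n-1}\mathcal{MT}(n,\alpha_{n,\beta},\beta)$ and $u$ attains this supremum. Theorem~\ref{thm-extremal} then follows at once by combining Proposition~\ref{extremalX} with this lemma for $\beta\in[0,\beta_0)$.

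\textbf{Anticipated main obstacle.} The delicate point is Step~2: one must produce a pointwise inequality for $v''+\frac{n-j}{j}\frac{v'}{r}$ for all $j=1,\dots,k$, not merely in an integral or weak sense. The key observation that makes this work is that both correction terms coming from the log-weight $w_\beta$ and from the exponential nonlinearity in \eqref{gradiente} contribute with the same (favorable) sign, so that the case $j=k$ is automatic from the Euler--Lagrange equation, and the cases $j<k$ reduce to it by monotonicity of $(n-j)/j$ combined with the sign of $v'$.
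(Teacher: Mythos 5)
Your argument is correct and follows essentially the same route as the paper's proof, relying on the same ingredients: Lemma~\ref{vc2} for the $C^2$ regularity of $v$, the identity \eqref{gradiente}, the radial formula for $S_j(D^2u)$, and the bridge identities \eqref{gate-norm}--\eqref{gate-fuctional} for the extremality transfer. The only stylistic difference is in the $k$-admissibility check: the paper writes $r^{n-j}(-v')^j=r^{n-2j}\Theta^{j/k}(r)$ with $\Theta(r)=\frac{\lambda}{c_n w_\beta(r)}\int_0^r s^{n-1}|v|^{\gamma_{n,\beta}-1}\mathrm{e}^{\alpha_{n,\beta}|v|^{\gamma_{n,\beta}}}\,\mathrm{d}s$ and differentiates this product directly for every $j\le k$, reading off the sign from $\Theta>0$, $\Theta'\ge0$, and $n-2j\ge0$, whereas you establish the case $j=k$ from the Euler--Lagrange ODE obtained by differentiating \eqref{gradiente} and then propagate to $j<k$ via the monotonicity of $(n-j)/j$ together with $v'\le0$ -- the two computations are algebraically equivalent and exploit the same sign structure.
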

\begin{proof}
From Lemma~\ref{vc2} we have $v\in C^{2}[0,1]$ and consequently $u\in C^{2}(B)$ with $u_{|\partial B}=0$. To prove $u\in \Phi^{k}_{0,\mathrm{rad}}(B, w_{\beta})$, we will first show that $u$ is a $k$-admissible function, that is, $S_j(D^2u)\ge 0$ for $j=1,2,\dots, k$ holds. By definition, $u$ is a radially symmetric function. Then the radial form of the $k$-Hessian operator yields
\begin{equation*}
S_{j}(D^2u)=\frac{1}{j}\binom{n-1}{j-1}r^{1-n}\left(r^{n-j}(u^{\prime})^{j}\right)^{\prime}.
\end{equation*}
Thus, to conclude the $k$-admissibility of $u$, it is sufficient to show that
\begin{equation*}
\left(r^{n-j}(u^{\prime})^{j}\right)^{\prime}\ge 0, \quad \mbox{for}\;\; j=1,\dots, k,
\end{equation*}
or equivalently
\begin{equation}\label{v-kadmi}
\left(r^{n-j}(-v^{\prime})^{j}\right)^{\prime}\ge 0, \quad \mbox{for}\;\; j=1,\dots, k
\end{equation}
Using \eqref{gradiente} and the assumption $n=2k$, it is easy to show that 
\begin{equation}\nonumber
\begin{aligned}
r^{n-j}(-v^{\prime})^{j}
&=r^{n-2j}\Theta^{\frac{i}{k}}(r),
\end{aligned}
\end{equation}
where 
\begin{equation*}
\Theta(r)=\frac{\lambda}{c_n w_{\beta}(r)}\int_{0}^{r}s^{n-1}|v|^{\gamma_{n,\beta}-1}\mathrm{e}^{\alpha_{n,\beta}|v|^{\gamma_{n,\beta}}}\,\mathrm{d}s.
\end{equation*}
Hence,
\begin{equation}\label{HessianAdss}
\begin{aligned}
  (r^{n-j}(-v^{\prime})^{j})^{\prime}&= (n-2j)r^{n-2j-1}\Theta^{\frac{i}{k}}(r)+r^{n-2j}\frac{j}{k}\Theta^{\frac{i}{k}-1}(r)\Theta^{\prime}(r)\\
   &=r^{n-2j}\Theta^{\frac{j}{k}}(r)\left[\frac{n-2j}{r}+\frac{j}{k}\frac{\Theta^{\prime}(r)}{\Theta(r)}\right].
\end{aligned}
\end{equation}
Note that $\Theta>0$ on $(0,1]$ and 
\begin{equation*}\Theta^{\prime}(r)=\frac{n\beta}{2}\frac{(-\ln r)}{r}\Theta(r)+\frac{\lambda}{c_n w_{\beta}(r)}\left(r^{n-1}|v|^{\gamma_{n,\beta}-1}\mathrm{e}^{\alpha_{n,\beta}|v|^{\gamma_{n,\beta}}}\right).\end{equation*}
Thus, $\Theta^{\prime}\ge 0$ on $(0, 1]$ and \eqref{HessianAdss} forces \eqref{v-kadmi} which proves that $u\in \Phi^{k}_{0,\mathrm{rad}}(B)$. From \eqref{gate-norm}, we have $\|u\|_{\Phi, w_{\beta}}=\|v\|_{w_{\beta}}=1$ and consequently $u\in \Phi^{k}_{0,\mathrm{rad}}(B, w_{\beta}) $. In addition, from \eqref{gate-fuctional} we can write
\begin{equation*}
\int_{B}\mathrm{e}^{\alpha_{n,\beta}|u|^{\gamma_{n,\beta}}}\, \mathrm{d} x=\omega_{n-1}\int_{0}^{1}r^{n-1}\mathrm{e}^{\alpha_{n,\beta}|v|^{\gamma_{n,\beta}}}\,\mathrm{d} r=\omega_{n-1}\mathcal{MT}(n,\alpha_{n,\beta},\beta)\geq \mathrm{MT}(n,\alpha_{n,\beta},\beta).
\end{equation*}
Thus, 
\begin{equation*}
\int_{B}\mathrm{e}^{\alpha_{n,\beta}|u|^{\gamma_{n,\beta}}}\, \mathrm{d} x = \mathrm{MT}(n,\alpha_{n,\beta},\beta),
\end{equation*}
which is the desired conclusion.
\end{proof}

\section{Potential Further Developments}
Our analysis to obtain optimal estimates provides new insights into the variational study of 
$k$-Hessian equations in the extreme regime. In the following, we highlight key aspects that complement and further refine the analysis presented in this work.
\begin{itemize}
\item Based on the sharp inequalities obtained in Theorems \ref{thm1} and \ref{thm2}, one can variationally study the $k$-Hessian equation with a logarithmic weight in the critical regime - where the energy functional experiences a loss of compactness - similar to the approach taken for the Laplacian equation in \cite{CALANCHI20151967}.
    \item In Theorem~\ref{thm-extremal}, we establish attainability for sufficiently small values of $\beta>0$. Unfortunately, as in the analogous results obtained by Nguyen~\cite{zbMATH07122708} and Roy~\cite{zbMATH07074659,zbMATH06562450}, we do not provide an optimal constant $\beta_0$ for which attainability is ensured when $\beta$ is smaller than $\beta_0$. It is important to note that this issue remains open even in the Laplacian case, i.e., when $k=1$.
    \item Propositions~\ref{prop-thm1} and \ref{prop-thm2} address the first-derivative case, while Jiang, Xu, Zhang, and Zhu~\cite{MR4870665} studied a similar problem in the space $X^{2,p}_{1,w}$ for second-order derivatives. The higher-order derivative case remains to be studied.
    \item In Section~\ref{section2}, we established Hardy inequalities for $v\in AC_R(0,R)$ with $0<R<\infty$, as this case was essential to our study. However, by applying analogous results from Opic and Kufner~\cite{zbMATH00046945}, it is also possible to study the remaining three cases, where $v\in AC_L(0,R)$ or $R=\infty$. Here $AC_L(0,R)$ denotes the space of locally absolutely continuous functions that tend to zero as $r\to0$.
\end{itemize}

\subsection*{Funding}
\begin{sloppypar}
J. M. do \'O acknowledges partial support from CNPq through grants 312340/2021-4, 409764/2023-0, 443594/2023-6, CAPES MATH AMSUD grant 88887.878894/2023-00 and Para\'iba State Research Foundation (FAPESQ), grant no 3034/2021. J. F de Oliveira acknowledges partial support from CNPq through grant 309491/2021-5. R. C. Ponciano acknowledges partial support from São Paulo Research Foundation (FAPESP) grant 2023/07697-9.\\
{\bf Data availability statement:} Our manuscript has no data associated or further material. \\
 {\bf Ethical Approval:}  All data generated or analyzed during this study are included in this article.\\
 {\bf Conflict of interest:} The authors declare no conflict of interest. \\
\end{sloppypar}

\end{document}